\def\thm@space@setup{%
  \thm@preskip=\parskip \thm@postskip=0pt
}
\definecolor{blue1}{rgb}{0.25,0.25,1}
\definecolor{red1}{rgb}{1,0.25,0.25}
\tikzstyle directed=[postaction={decorate, decoration={markings, mark=at position 0.5 with {\arrow{Latex[length=3mm, width = 3mm]} } } }, very thick ]
\tikzstyle rightarrow=[postaction={decorate, decoration={markings, mark=at position 1 with {\arrow{Stealth[length=3mm, width = 3mm]} } } }, very thick ]
\DeclareMathOperator{\dev}{dev}
\DeclareMathOperator{\hol}{hol}
\DeclareMathOperator{\hull}{hull}
\DeclareMathOperator{\Id}{Id}
\DeclareMathOperator{\Out}{Out}
\DeclareMathOperator{\conv}{conv}
\DeclareMathOperator{\pos}{pos}
\DeclareMathOperator{\bd}{\partial}
\DeclareMathOperator{\im}{im}
\DeclareMathOperator{\MCG}{Mod}
\def\tri{\Delta}
\DeclareMathOperator{\PGL}{PGL}
\DeclareMathOperator{\GL}{GL}
\DeclareMathOperator{\PSL}{PSL}
\DeclareMathOperator{\SL}{SL}
\DeclareMathOperator{\SO}{SO}
\def\A{\mathcal{A}}
\def\X{\mathcal{X}}
\def\Cell{\mathring{\mathcal{C}}}
\def\clCell{\mathcal{C}} 
\def\T{\mathcal{T}}
\def\Tt{\mathcal{T}}
\def\Ttf{\mathcal{T}_f}
\def\decTtf{{\mathcal{T}}_f^\ddag}
\def\singdecTtf{{\mathcal{T}}_f^\dag}
\def\decTeich{{\mathcal{T}}_{\hyp}^\dag}
\def\Teich{\mathcal{T}_{\hyp}}
\def\End{\mathcal{E}}
\def\Z{\mathbb{Z}}
\def\N{\mathbb{N}}
\def\R{\mathbb{R}}
\def\Q{\mathbb{Q}}
\def\RP{\mathbb{RP}}
\def\HH{\mathbb{H}}
\newcommand{\vtx}{C}
\newcommand{\covtx}{r}
\def\vec_dec{\mathcal{V}}
\def\covec_dec{\mathcal{R}}
\newcommand{\trCoords}{\bar{A}}
\newcommand{\eCoords}{\bar{a}}
\newcommand{\eCoordsb}{\bar{b}}
\newcommand{\hyp}{\text{hyp}}
\theoremstyle{plain}
	\newtheorem{thm}{Theorem}[section]
	\newtheorem*{thm*}{Theorem}
	\newtheorem{lem}[thm]{Lemma}
	\newtheorem*{lem*}{Lemma}
	\newtheorem{cor}[thm]{Corollary}
	\newtheorem*{cor*}{Corollary}
	\newtheorem*{cla*}{Claim}
	\newtheorem*{pro*}{Proposition}
	\newtheorem{rem}[thm]{Remark}
	\newtheorem*{rem*}{Remark}
	\newtheorem{exm}[thm]{Example}
	\newtheorem*{exm*}{Example}
	\newtheorem*{defn*}{Definition}
\begin{document}

\title{On Moduli Spaces of Convex Projective Structures on Surfaces: \\ Outitude and Cell-Decomposition in Fock-Goncharov Coordinates}
\author{Robert Haraway, Robert L\"owe, Dominic Tate and Stephan Tillmann}

\begin{abstract}
Generalising a seminal result of Epstein and Penner for cusped hyperbolic manifolds, Cooper and Long showed that each decorated strictly convex projective cusped manifold has a canonical cell decomposition. Penner used the former result to describe a natural cell decomposition of decorated Teichm\"uller space of punctured surfaces. We extend this cell decomposition to the moduli space of decorated strictly convex projective structures of finite volume on punctured surfaces.

The proof uses Fock and Goncharov's $\A$--coordinates for doubly decorated structures. In addition, we describe a simple, intrinsic edge-flipping algorithm to determine the canonical cell decomposition associated to a point in moduli space, and show that Penner's centres of Teichm\"uller cells are also natural centres of the cells in moduli space. We show that in many cases, the associated holonomy groups are semi-arithmetic.
\end{abstract}

\primaryclass{57M50, 51M10, 51A05, 20H10, 22E40}


\keywords{convex projective surfaces, Teichm\"uller spaces, semi-arithmetic group}
\makeshorttitle


\section{Introduction}
\label{sec:intro}


Classical Teichm\"uller space is the moduli space of marked hyperbolic structures of finite volume on a surface. In the case of a punctured surface, many geometrically meaningful \emph{ideal cell decompositions} for its 
Teichm\"uller space are known. For instance, quadratic differentials are used for the construction attributed to Harer, Mumford 
and Thurston~\cite{Harer-cohomological-1986}; hyperbolic geometry and geodesic laminations are used by Bowditch and Epstein~\cite{BoEp-natural-1988}; and Penner~\cite{Penner-Decorated-1987} uses Euclidean cell decompositions associated to the 
points in \emph{decorated} Teichm\"uller space. The decoration arises from associating a positive real number to each cusp of the surface, and this is
used in the key construction of Epstein and Penner~\cite{EpPe-Euclidean-1988} that constructs a Euclidean cell decomposition of a decorated and marked hyperbolic surface.
 All of these decompositions of (decorated) Teichm\"uller space are natural in the sense that they are invariant under the action of the mapping class group 
(and hence descend to a cell decomposition of the moduli space of unmarked structures) and that they do not involve any arbitrary choices.

A hyperbolic structure is an example of a \emph{strictly convex projective structure}, and two hyperbolic structures are equivalent as 
hyperbolic structures if and only if they are equivalent as projective structures. Let 
$S_{g,n}$ denote the surface  of genus $g$ with $n$ punctures. We will always assume that $2g+n>2,$ 
so that the surface has negative Euler characteristic. Whereas the classical Teichm\"uller space $\Teich(S_{g,n})$ is homeomorphic with $\R^{6g-6+2n},$ Marquis~\cite{Marquis-espaces-2010} has shown that the analogous moduli space $\Ttf(S_{g,n})$ 
of marked strictly convex projective structures of finite volume on $S_{g,n}$ is homeomorphic with $\R^{16g-16+6n}.$ 

There is a natural \emph{decorated} moduli space $\singdecTtf(S_{g,n})$, again obtained by associating a positive real number to each cusp of the surface.
Cooper and Long~\cite{CoLo-Epstein-Penner-2015} generalised the construction of Epstein and Penner~\cite{EpPe-Euclidean-1988}, thus
associating to each point in the decorated moduli space $\singdecTtf(S)$ of strictly convex projective structures of finite volume an ideal cell decomposition of $S.$
Cooper and Long~\cite{CoLo-Epstein-Penner-2015} state that their construction can be used to define a decomposition of the decorated moduli 
space $\singdecTtf(S),$ but that it is not known whether all components of this decomposition are cells. The main result of this paper establishes the fact that this is indeed always the case.  This was previously only known  in the case where $S$ is the once-punctured torus or a sphere with three punctues~\cite{HaTi-tessellating-2017}.

As in the classical setting, there is a principal $\R^n_+$ foliated fibration 
$\singdecTtf(S_{g,n}) \to {\Ttf}(S_{g,n}),$ and different points in a fibre above a point of 
${\Ttf}(S_{g,n})$ may lie in different components of the cell decomposition of $\singdecTtf(S_{g,n}).$ 
However, if there is only one cusp, then all points in a fibre lie in the same component, and one obtains a 
decomposition of ${\Ttf}(S_{g,1}).$

A strictly convex projective surface is a quotient $\Omega/\Gamma,$ where $\Omega \subset \RP^2$ is a strictly convex domain and $\Gamma$ is a discrete group preserving $\Omega.$ One technical difficulty in working with strictly convex projective structures arises from the fact that as one varies a point in moduli space, not only the associated holonomy group $\Gamma$ varies, but also the domain $\Omega$ varies. The key in our proof is to use a particularly nice coordinate system for the space $\decTtf(S_{g,n})$
of \emph{doubly-decorated}  strictly convex projective structures due to Fock and Goncharov \cite{FoGo-moduli-2006}. This has a principal $\R^n_+$ foliated fibration $\decTtf(S_{g,n}) \to \singdecTtf(S_{g,n}),$ so different points in a fibre above a point of $\singdecTtf(S_{g,n})$ have the same canonical cell decomposition.

Fock and Goncharov \cite{FoGo-moduli-2006} devise parameterisations of $\Ttf(S_{g,n})$ and $\decTtf(S_{g,n})$ by choosing an ideal triangulation of the surface and associating to each triangle one positive parameter and to each edge two. We distinguish the edge parameters by associating them to the edge with different orientations. This gives a parameter space diffeomorphic with $\R^{16g-16+8n},$ which is given two different interpretations.

The \emph{$\mathcal{X}$-coordinates} arise from flags in the projective plane $\RP^2.$ This is shown in \cite{FoGo-moduli-2007} to
parameterise the space of strictly convex projective structures on $S$ with a framing at each end. This amounts to a finite branched cover of the space of strictly convex projective structures on $S$ studied by Goldman, and parameterises structures that may have finite or infinite area. The space $\Ttf(S_{g,n})$ is identified with a subvariety of $\R_{>0}^{16g-16+8n}.$ See \cite{CTT-moduli-2018} for a complete discussion of these facts.

The \emph{$\A$-coordinates} arise from flags in $\R^3.$ Indeed, they describe the lift of the developing map for a strictly convex projective structures from $\RP^2$ to $\R^3,$ and are shown to parameterise the space of finite-area structures with the additional data of a vector and covector decoration at each cusp of $S$. We denote this double decorated space by 
$\decTtf(S_{g,n}).$ We give a complete treatment in \S\ref{sec:decorated_higher_teichmueller_space}, where we also show that the space of independent decorated structures 
$\singdecTtf(S_{g,n})/\R_{>0}$ can be identified with a subset of $\decTtf(S_{g,n})$ that is a product of $n+1$ open simplices. The details for the relationship between $\A$-coordinates and lifts of developing maps to $\R^3$ are probably well known to experts.

In \S\ref{sec:outitude}, we introduce the key player in our approach, the \emph{outitude} associated to an edge of the ideal triangulation of $S.$ We show that, using $\A$--coordinates, there is a simple edge flipping algorithm resulting in the canonical ideal cell decomposition associated to a point in $\decTtf(S_{g,n}).$ 
The outitude is then used to prove that we obtain a cell decomposition of $\decTtf(S_{g,n})$ in \S\ref{sec:main_proof}. The main idea of the proof is to determine a natural product structure for each putative cell in the decomposition of $\decTtf(S_{g,n})$, and to show that each level set in this product structure is star-shaped. This is first done for the case of a triangulation (Theorem~\ref{thm:main}), and the proof for more general cell decompositions of $S$ is more involved (Theorem~\ref{thm:main2}).

In \S\ref{sec:duality}, we analyse projective duality in $\A$-coordinates, identify classical Teichm\"uller space in these coordinates and show that our cell decomposition is generally not invariant under duality. Benoist and Hulin \cite{BenoistHulin:2013} showed that $\Ttf(S_{g,n})$ is the product of classical Teichm\"uller space $\T_\hyp(S_{g,n})$ and the vector space of 
cubic holomorphic differential on the surface with poles of order at most 2 at the cusps. In particular, any cell decomposition of classical Teichm\"uller space gives rise to a cell decomposition of the spaces $\Ttf(S_{g,n})$, $\singdecTtf(S_{g,n})$ and $\decTtf(S_{g,n})$. Duality is used to show that our cell decomposition generally does not arise in this way in  \S\ref{sec:Other cell decompositions}.

Penner~\cite{Penner-Decorated-1987} describes a natural centre for each of the cells in classical Teichm\"uller space, and shows that the centres of top-dimensional cells correspond to arithmetic Fuchsian groups. In \S\ref{sec:cells}, we show that Penner's centres are also natural centres of the cells in $\decTtf(S_{g,n})$, and that they correspond to semi-arithmetic Fuchsian groups in many (but not all) cases. For instance, if the associated cell decomposition of the surface only involves polygons with an odd number of sides, then the associated group is semi-arithmetic.

The study of \emph{higher Teichm\"uller spaces} has emerged through the examples of \emph{Hitchin components} and \emph{maximal representations} (see \cite{Burger-higher-2014, Wienhard-invitation-2019} for an overview of the field and references). 
Generalisations from classical to higher Teichm\"uller theory typically have a geometric interpretation for convex real projective structures in the case of $\Ttf(S_{g,n})$ and $\decTtf(S_{g,n})$. 
The spaces $\Ttf(S_{g,n})$ and $\decTtf(S_{g,n})$ can thus be viewed as stepping stones between classical Teichm\"uller space and the higher Teichm\"uller spaces. 
It would be interesting to construct a generalisation of the outitude for arbitrary semisimple Lie groups that leads to cell decompositions of all higher Teichm\"uller spaces.\\


\textbf{Acknowledgements}

Research by L\"owe is supported by the DFG via SFB-TRR 109: ``Discretization in Geometry and Dynamics''.
Tate acknowledges support by the Australian Government Research Training Program.
Research of Tillmann is supported in part under the Australian Research Council's ARC Future Fellowship FT170100316.

\section{Preliminaries}
\label{sec:Background}

This section fixes some useful notation and terminology for ideal cell decompositions of punctured surfaces (\S\ref{subsec:icd} and \S\ref{subsec:straight triangs}), convex projective structures of finite volume on punctured surfaces (\S\ref{subsec:convex_projective_structures}); and decorations and double-decorations of such structures (\S\ref{subsec:Decorated and doubly decorated}).
For further background on these topics, the reader is referred to 
Hatcher~\cite{Hatcher-triangulations-1991}, Mosher~\cite{Mosher-tiling-1988}, Penner~\cite{Penner-Decorated-1987}, Goldman~\cite{Goldman-convex-1990, Goldman-geometric-1988}, Marquis~\cite{Marquis-espaces-2010}, Cooper and Long \cite{CoLo-Epstein-Penner-2015}, and also \cite{CTT-moduli-2018}.


\subsection{Ideal Cell Decompositions} \label{subsec:icd}

Let $S_g$ be a closed, oriented, smooth surface of genus $g.$ Let $D_1, \ldots, D_n$ be open discs on $S_g$ with pairwise disjoint closures. Choose points $p_k \in D_k.$ Then $S_{g,n} = S_g \setminus \{p_1, \ldots, p_n\}$ is a \emph{punctured surface}. A \emph{compact core} of $S_{g,n}$ is $S_{g,n}^c = S_g \setminus \cup D_k.$ We also write $S_g = S_{g,0}$ and $\mathcal{P} = \{p_1,\ldots,p_n\}$.
We always assume that $S_{g,n}$ has negative Euler characteristic.

We call $\End_k = \overline{D_k} \setminus \{p_k\}$ an \emph{end} of $S_{g,n}.$ The inclusion $\End_k \subset S_{g,n}$ of oriented surfaces defines a conjugacy class of \emph{peripheral subgroups} corresponding to $\im(\pi_1(\End_k) \to \pi_1(S_{g,n})) \cong \Z$. To simplify notation, we pick one representative of the conjugacy class and denote it by $\pi_1(\End_k) \le \pi_1(S_{g,n}).$ Any conjugate to one of the subgroups $\pi_1(\End_k)$ in $\pi_1(S_{g,n})$ is called a \emph{peripheral subgroup} of $\pi_1(S_{g,n}).$

An \emph{essential arc} in $S_{g}$ is an arc $\alpha \subset S_g$ whose endpoints are in $\mathcal{P}$ (they may coincide) and whose interior is embedded in $S_{g,n},$ with the condition that if $S_g \setminus \alpha$ has two components, then neither component is a disc meeting $\mathcal{P}$ only in the endpoints of $\alpha.$ We will also require that the intersection of $\alpha$ with the compact core $S_{g,n}^c$ is connected.

An \emph{ideal triangulation} $\tri$ of $S_{g,n}$ is the intersection with $S_{g,n}$ of a \emph{maximal} union of pairwise disjoint essential arcs in $S_g$, no two of which are ambient isotopic fixing $\mathcal{P}.$ More generally, an \emph{ideal cell decomposition} $\Delta$ of $S_{g,n}$ is the intersection with $S_{g,n}$ of a union $\Delta$ of pairwise disjoint essential arcs in $S_g$, no two of which are ambient isotopic fixing $\mathcal{P},$ and with the property that each component of $S_{g,n} \setminus \Delta$ is an open cell. We regard two ideal cell decompositions of $S_{g,n}$ as equivalent if they are isotopic via an isotopy of $S_{g}$ that fixes $\mathcal{P}$.

The essential arcs in an ideal cell decomposition cut $S_{g,n}$ into \emph{ideal polygons.} Each component of $S_{g,n}\setminus \Delta$ is the interior of an ideal polygon, and the closure of this component in $S_{g,n}$ is an ideal polygon. For example, as a subsurface of $S_{g,n}$, an ideal triangle is either a closed disc with three punctures on its boundary, or a closed disc with one puncture on its boundary and one puncture in its interior. The edges of an ideal polygon $P$ are the arcs in $\Delta$ incident with $P$, counted with multiplicity. The \emph{corners} of $P$ are the components of the intersection of $P$ with the ends of $S_{g,n}.$
For example, the polygon depicted in Figure \ref{fig:Standard_Triangulation1} has eight edges and eight corners regardless of how those edges and ideal vertices are identified in $S_{g,n}$.

Any ideal cell decomposition $\Delta$ of $S_{g,n}$ can be extended to an ideal triangulation $\Delta' \supseteq \Delta$ of $S_{g,n}$. We say that $\Delta'$ is a \emph{standard subdivision of $\Delta$} if on each polygon $P$ there exists a corner that meets every arc in $\Delta' \setminus \Delta$ that is contained in $P$. (Recall that the intersection of each arc with the compact core is assumed to be connected.) A polygon with a standard ideal triangulation is depicted in Figure~\ref{fig:Standard_Triangulation1}.

\begin{figure}
	\center
	\begin{tikzpicture}[scale=0.6]
	\draw (2, 0) node [anchor = north]{$V_0$};
	\draw (4, 0) node [anchor = north]{$V_1$};
	\draw (6, 2) node [anchor = west]{$V_2$};
	\draw (6, 4) node [anchor = west]{$V_3$};
	\draw (4, 6) node [anchor = south]{$V_4$};
	\draw (2, 6) node [anchor = south]{$V_5$};
	\draw (0, 4) node [anchor = east]{$V_6$};
	\draw (0, 2) node [anchor = east]{$V_7$};
	\draw (2, 0) -- (4, 0);
	\draw (4, 0) -- (6, 2);
	\draw (6, 2) -- (6, 4);
	\draw (6, 4) -- (4, 6);
	\draw (4, 6) -- (2, 6);
	\draw (2, 6) -- (0, 4);
	\draw (0, 4) -- (0, 2);
	\draw (0, 2) -- (2, 0);
	\draw (2, 0) edge[red] (6, 2);
	\draw (2, 0) edge[red] (6, 4);
	\draw (2, 0) edge[red] (4, 6);
	\draw (2, 0) edge[red] (6, 4);
	\draw (2, 0) edge[red] (2, 6);
	\draw (2, 0) edge[red] (0, 4);
	\end{tikzpicture}
	\caption{A standard triangulation of an $8$-gon.} \label{fig:Standard_Triangulation1}
\end{figure}
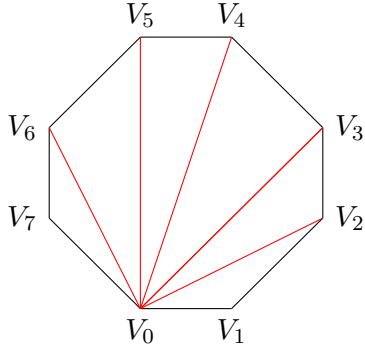

An \emph{edge flip} on an ideal triangulation consists of picking two distinct ideal triangles sharing an edge, removing that edge and replacing it with the other diagonal of the square thus formed (see Figure~\ref{fig:edge_flip}). Hatcher~\cite{Hatcher-triangulations-1991} and Mosher~\cite{Mosher-tiling-1988} showed that any two ideal triangulations of $S_{g,n}$ differ by finitely many edge flips.

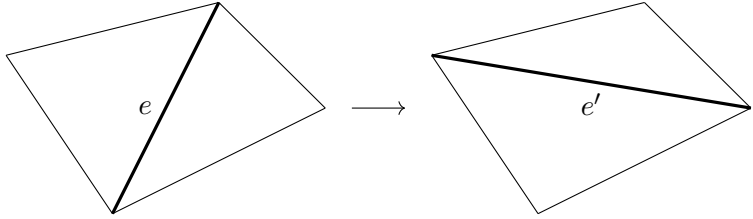
\begin{figure}
	\center
	\begin{tikzpicture}[scale=0.7]
	\draw (4, 4) -- (6, 2);
	\draw (6, 2) -- (2,0);
	\draw (2, 0) edge[very thick, "$e$" left, pos = 0.5] (4, 4);
	\draw (4, 4) -- (0, 3);
	\draw (0, 3) -- (2, 0);
	
	\draw[->] (6.5, 2) -- (7.5, 2);
	
	\draw (12, 4) -- (14, 2);
	\draw (14, 2) -- (10,0);
	\draw (8, 3) edge[very thick, "$e'$" below, pos = 0.5] (14, 2);
	\draw (12, 4) -- (8, 3);
	\draw (8, 3) -- (10, 0);
	\end{tikzpicture}
	\caption{An edge flip along $e$.} 
	\label{fig:edge_flip}
\end{figure}


\subsection{Real Convex Projective Structures} \label{subsec:convex_projective_structures}

This paper is concerned with a special class of $(\PGL_3(\R), \RP^2)$--structures on $S = S_{g,n}$, namely \emph{strictly convex projective structures of finite volume}. 

There is a natural analytic isomorphism $\PGL_3(\R) \rightarrow \SL_3(\R)$ defined by 
\[ A \mapsto (\det A)^{-1/3} \ A.\]
We may (and will) therefore work with the group $\SL_3(\R)$, and note that it is isomorphic as an analytic group with $\PGL_3(\R).$ In particular, any arguments involving continuity, topology or analyticity that arise are not affected by working with the special linear group. 

An open subset $\Omega \subset \RP^2$ is said to be \emph{properly convex} if its closure $\overline{\Omega}$ is strictly contained in an affine patch of $\RP^2$ and it is convex as a subset of that affine patch. We say that $\Omega$ is \emph{strictly convex} if it is properly convex and $\partial \overline{\Omega}$ does not contain an interval. We denote by $\SL_3(\Omega)$ the subgroup of $\SL_3(\R)$ which preserves $\Omega$.

A \emph{marked properly convex projective structure} on $S$ is a triple $(\Omega,\Gamma,\phi)$ consisting of
\begin{itemize}
\item a properly convex domain $\Omega \subset \RP^2$,
\item a discrete, torsion-free group $\Gamma < \SL_3(\Omega)$, called the \emph{holonomy group}, and
\item a diffeomorphism $\phi : S \rightarrow \sfrac{\Omega}{\Gamma}$, commonly referred to as the \emph{marking}.
\end{itemize}
A surface endowed with a properly convex projective structure is a 
\emph{properly convex projective surface}. Two marked properly convex projective structures $(\Omega_0,\Gamma_0,\phi_0)$ and $(\Omega_1,\Gamma_1,\phi_1)$ are \emph{equivalent} if there is
$M \in \SL_3(\R)$ such that 
\begin{itemize}
	\item $M \cdot \Omega_0 = \Omega_1$ \enspace ,
	\item $M \cdot \Gamma_0 \cdot M^{-1} = \Gamma_1$ and 
	\item $\phi_1$ is isotopic to $M^\ast \circ \phi_0$ , where $M^\ast$ is the diffeomorphism $\sfrac{\Omega_0}{\Gamma_0} \rightarrow \sfrac{\Omega_1}{\Gamma_1}$ induced by $M$.
\end{itemize}
The set $\Tt(S)$ of all such equivalence classes is the \emph{moduli space of marked properly convex projective structures} on $S$.

Fix once and for all a universal cover $\pi: \widetilde{S} \rightarrow S$. Let $( \Omega , \Gamma , \phi ) \in \Tt(S) $. The \emph{developing map} of this structure is a lift of $\phi$ to 
a diffeomorphism whose image is $\Omega$, denoted by
\(
\dev\co \widetilde{S} \rightarrow \Omega.
\)
The \emph{holonomy representation} $\hol\co \pi_1(S) \rightarrow \Gamma$ is the push-forward via $\dev$ of the action of $\pi_1(S)$ on $\widetilde{S}$. It follows that, having fixed a universal cover, the pair $(\dev, \hol)$ uniquely determines the marked structure $( \Omega , \Gamma , \phi ).$ The induced action of $\SL_3(\R)$ is given by $M \cdot (\dev, \hol) = (M \circ \dev, M\hol M^{-1})$ for all $M \in \SL_3(\R).$

The \emph{Hilbert metric} is a natural Finsler metric on the properly convex domain $\Omega$ that is invariant under the action of $\SL_3(\Omega)$ (the definition of this metric is irrelevant for this paper; for details see \cite{Goldman-convex-1990}). If, in addition, $\Omega$ is strictly convex then there is a unique geodesic between any two points. If $(\Omega, \Gamma, \phi)$ is a properly convex projective structure then, since $\Gamma$ acts on $\Omega$ by isometries, the Hilbert metric descends to a metric on $\Omega / \Gamma $.

Taking the Hausdorff measure with respect to this metric endows $S$ with a volume. In order to make use of  the canonical cell decomposition of $S$ due to Cooper and Long~\cite{CoLo-Epstein-Penner-2015} we restrict our attention to those strictly convex projective structures on $S$ that have finite volume. It is shown 
by Marquis~\cite{Marquis-espaces-2010} 
that this is equivalent to the requirement that for all non-trivial peripheral elements $\gamma \in \pi_1(S)$ the holonomy $\hol(\gamma)$ is conjugate to the standard parabolic, i.e.
\[
\hol (\gamma) \sim \begin{pmatrix} 1 & 1 & 0 \\ 0 & 1 & 1 \\ 0 & 0 & 1 \end{pmatrix} \enspace .
\]
The moduli space of all properly convex projective structures on $S$ having finite volume is denoted by $\Ttf(S)$.


\subsection{Straight ideal triangulations}
\label{subsec:straight triangs}

Suppose $S$ has a strictly convex real projective structure of finite volume $(\Omega, \Gamma, \phi)$.  An ideal triangulation of $S$ is \emph{straight} if each ideal edge is the image of the intersection of $\Omega$ with a projective line. Every ideal triangulation of $S$ is isotopic to a straight ideal triangulation (see \cite{TiWo-algorithm-2016}).

In the following, we use an ideal triangulation $\tri$ of $S$ to obtain parametrisations of moduli spaces. This requires us to clarify the relationship between the smooth structure on $S$ and the ideal triangulation $\tri.$ We take the following viewpoint.
The surface $S$ is given by an ideal triangulation. We identify each ideal triangle with the regular euclidean triangle of side length one and with its vertices removed, and we realise the face pairings of triangles by euclidean isometries. The resulting identification space has a natural analytic structure, and we assume that $S$ is given with this structure. This construction lifts to the universal cover of $S$. So given a real analytic surface $S$ with an ideal triangulation $\tri,$ we may assume that each ideal triangle in $\widetilde{S}$ is identified with a euclidean triangle (and hence parameterised using barycentric coordinates), and the face pairings between triangles are realised by euclidean isometries. 

Now suppose that $\dev \co \widetilde{S} \to \Omega \subset \RP^2$ is a developing map for a properly convex projective structure on $S.$ By definition, this is an analytic map, but the images of the ideal edges may not be straight in $\Omega.$ We may compose $\dev$ with an isotopy to obtain a map that is linear on each ideal triangle and continuous. We will term the resulting map a \emph{PL developing map}\index{developing map ! PL}. 

Conversely, suppose $\Omega$ is a properly convex domain and $\hol\co \pi_1(S)\to \SL(\Omega)$ is a discrete and faithful representation. Assume that 
$\dev'\co \widetilde{S} \to \Omega \subset \RP^2$ is a continuous bijection that is linear on each ideal triangle and equivariant with respect to $\hol.$ Choose a regular neighbourhood $N$ of the union of the ideal edges. We may equivariantly approximate $\dev'$ by an analytic map, only changing it in $N.$ This gives a developing map $\dev \co \widetilde{S} \to \Omega \subset \RP^2$ that agrees with $\dev'$ in the complement of $N,$ and hence is uniquely determined by $\dev'.$ Moreover, the images of the ideal endpoints of the edges will remain unchanged, and hence applying the straightening procedure to $\dev$ again results in $\dev'.$ 

The upshot of this discussion is that we can work interchangeably with developing maps and PL developing maps, and the latter are uniquely determined by the limiting endpoints of the ideal edges on the boundary of the properly convex domain.


\subsection{Decorated and doubly decorated real projective structures}
\label{subsec:Decorated and doubly decorated}

Let $\tau  = (\Omega, \Gamma, \phi ) \in  \Ttf(S)$ and denote its developing map and holonomy representation by $\dev_\tau$ and $\hol_\tau$ respectively.
Recall that we assume that $\Omega$ is contained in some affine patch $A$. There is a natural embedding 
\[
\iota : A \rightarrow \{ (x, y, 1) \in \R^3 \}  \enspace ,
\] 
which is unique up to an affine transformation of the codomain. 
We identify $\Omega$ with its image via $\iota$. 
The \emph{positive light-cone} of $\Omega$ is the set $\mathcal{L}^+ = \R_{>0} \cdot \partial \Omega \subset \{  (x, y, z) \in \R^3 \mid z > 0 \}$.

For each end $\End_i$ let $p_i$ be 
the unique fixed point of $\hol_\tau(\pi_1(\End_i))$.
Choose a light-cone representative $V_{i} \in \mathcal{L}^+$ that projects to $p_i$, i.e. $\left[ V_i \right] = p_i$.
The union of $\Gamma$-orbits
\begin{equation}\label{def:vec_dec}
\vec_dec = \bigcup_{i=1}^n \Gamma \cdot V_i
\end{equation}
is called a \emph{vector decoration}, or simply a \emph{decoration} of $\tau$.

With the above notation, for each $V_i$ we additionally choose a covector $\covtx_i \in (\R^3)^*$ whose kernel projects to a supporting line of $\overline{\Omega}$ and such that $\covtx_i \cdot V_i = 0$ and $\covtx_i \cdot W > 0$ for all $W \in \vec_dec \setminus \{ V_i \}$.
The union of $\Gamma$-orbits 
\begin{equation}\label{def:covec_dec}
\covec_dec = \bigcup_{i=1}^n \covtx_i \cdot \Gamma^{-1}
\end{equation}
is called a \emph{covector decoration} of $\tau \in \Ttf(S)$.  

A \emph{double decoration} of a marked properly convex projective structure $\tau \in \Ttf(S)$ is a tuple $( \covec_dec , \vec_dec )$ consisting of a vector decoration $\vec_dec$ and a covector decoration $\covec_dec$ of $\tau$.

Two doubly-decorated structures $( \Omega_0 , \Gamma_0 , \phi_0 , ( \covec_dec_0 , \vec_dec_0 ) ) $ and $ ( \Omega_1 , \Gamma_1 , \phi_1 , ( \covec_dec_1 , \vec_dec_1 ) ) $ on $S$ are equivalent if $(\Omega_0, \Gamma_0, \phi_0)$ and $(\Omega_1, \Gamma_1, \phi_1)$ are equivalent as marked convex projective structures and the transformation 
$M \in \SL_3(\R)$ that induces this equivalence also satisfies the 
condition
\[
( \covec_dec_0 \cdot M^{-1} ,  M \cdot \vec_dec_0  ) = (\covec_dec_1, \vec_dec_1)  \enspace.
\]
The set of doubly-decorated equivalence classes of marked real convex projective structures on $S$ is called the \emph{moduli space of doubly-decorated convex real projective structures} and is denoted $\decTtf(S)$. 

The \emph{mapping class group} of $S$ is the group of isotopy classes of diffeomorphisms of $S$, denoted by
\[
\MCG (S) = \text{Diff}^+(S) ~ / ~ \text{Diff}_0(S) \enspace ,
\] 
where $\text{Diff}^+(S)$ denotes the set of orientation preserving diffeomorphisms of $S$ and $\text{Diff}_0(S)$ denotes the subset of those diffeomorphisms which are isotopic to the identity.
$\MCG (S)$ naturally acts on $\decTtf (S)$ by precomposition of the marking, i.e.
\begin{align*}
\MCG (S) \times \decTtf(S) & \rightarrow \decTtf(S) \\
\left( \mu , (\Omega, \Gamma, \phi, (\covec_dec, \vec_dec)) \right) & \mapsto (\Omega, \Gamma, \phi \circ \mu^{-1}, (\covec_dec, \vec_dec)) \enspace .
\end{align*}
If $\mu \in \MCG(S)$ then we refer to the induced map $\mu^*: \decTtf(S) \rightarrow \decTtf(S)$ as the \emph{change of marking} map induced by $\mu$.

We now describe a natural topology on $\decTtf(S)$. Let $C_w^{\infty}(\widetilde{S}, \RP^2)$ denote the space of smooth maps from $\widetilde{S}$ to $\RP^2$ with the weak topology (see Hirsch~\cite{Hirsch-differential-1976}). Since $\SL_3(\R)$ acts by diffeomorphisms on $\RP^2$ there is a natural action of $\SL_3(\R)$ on $C_w^{\infty}(\widetilde{S}, \RP^2 )$ by postcomposition. We give $C_w^{\infty}(\widetilde{S}, \RP^2) /  \SL_3(\R)$ the quotient topology. We now make use of the map
\begin{align*}
\Ttf(S) & \rightarrow C_w^{\infty}(\widetilde{S}, \RP^2) / \SL_3(\R) \enspace , \\
(\Omega, \Gamma, \phi) & \mapsto [\dev] \enspace,
\end{align*}
to pull back a natural topology on $\Ttf(S)$. Having fixed a preferred peripheral subgroup for each end,
the double decoration of a convex projective structure of finite volume amounts to a single choice of vector in $\R^3$ and covector in $(\R^3)^\ast$. Therefore we have an inclusion
\begin{align*}
\decTtf(S) & \hookrightarrow \Ttf(S) \times \left( (\R^3)^\ast \times \R^3 \right)^n \enspace .
\end{align*}
The right hand side is given the product topology, and $\decTtf(S)$ is given the subspace topology from the inclusion. This topology does not depend on the choice of peripheral subgroup for each end.


\section{A parameterisation of the moduli space}
\label{sec:decorated_higher_teichmueller_space}


Following Fock and Goncharov~\cite{FoGo-moduli-2006}, this section gives an explicit parameterisation of the moduli space of marked doubly-decorated convex real projective structures on $S_{g,n}$ using an ideal triangulation of the surface. The parameters are called \emph{$\A$-coordinates}. We spell out a number of details that are implicit in \cite{FoGo-moduli-2006, FoGo-moduli-2007}.
The main result of this section is Theorem~\ref{thm:A-coords} in which it is shown that the space of $\A$-coordinates is naturally homeomorphic to $\decTtf(S)$. 
We also analyse the action of the mapping class group and use this to give the moduli space a natural analytic structure (see Corollary~\ref{cor:analytic structure}).
We conclude with a discussion of the relationship between the two coordinate systems given by Fock and Goncharov in \cite{FoGo-moduli-2006}, the $\A$-coordinates and the $\X$-coordinates, and an identification of the space of independent decorated structures $\singdecTtf(S_{g,n})/\R_{>0}$ with a subset of $\decTtf(S_{g,n})$ that is a product of $n+1$ open simplices.

For convenience, we often identify a triangle or (oriented) edge with its parameter.


\subsection{Concrete decorated triangulations} \label{subsec:concretes}

A \emph{concrete flag} is a pair $(r, V)$, where $V \in \R^3 \setminus \{ 0 \} $ 
and $ \covtx \in (\R^3)^{\ast}\setminus \{ 0 \}$ satisfy $\covtx \cdot V = 0$. We write elements of $\R^3$ as column vectors and elements of $(\R^3)^{\ast}$ as row vectors, and call them \emph{vectors} and {covectors} respectively.
An \emph{affine flag} is the projectivisation of a concrete flag.

A \emph{concrete decorated triangle} is a pair $(R, C) \in \GL_3(\R) \times \GL_3(\R)$ such that
\begin{itemize}
\item $R \cdot C$ is positive counter-diagonal, that is to say its diagonal entries are zero and all other entries are positive, and
\item $\det (C) > 0$.
\end{itemize}
A simple calculation shows that positive counter-diagonal $3 \times 3$ real matrices have positive determinant.

Let $(R, C)$ be a concrete decorated triangle.
The \emph{vertices} of $(R, C)$ are the columns of $C$ and the \emph{covertices} of $(R, C)$ are the rows of $R$. Thereby we identify a concrete decorated triangle with an ordered choice of three concrete flags $(\covtx_0,\vtx_0), (\covtx_1,\vtx_1)$ and  $(\covtx_2,\vtx_2)$, called its \emph{concrete decorated vertices}, such that
\begin{itemize}
\item $\covtx_i \cdot \vtx_j > 0$ , for $i \neq j$, and
\item $\det ( \vtx_0 \mid \vtx_1 \mid \vtx_2 ) >0 $ .
\end{itemize}
where $\mid$ denotes concatenation of columns, so $C = ( \vtx_0 \mid \vtx_1 \mid \vtx_2)$. To simplify notation we denote the matrix $R$ by $( \covtx_0 \mid\mid \covtx_1 \mid \mid \covtx_2)$ where $\mid \mid $ denotes concatenation of rows. The \emph{hull} of $(R, C)$ is the convex hull of its vertices, denoted $\hull(R, C)$. This is a flat triangle in $\R^3$ and it is endowed with a canonical orientation by the ordering of its vertices, induced by the columns of $C$. Given that $\hull(R, C)$ is a flat triangle we may refer to an ordered pair of two columns of $C$ as an \emph{oriented edge} of $(R, C)$.

\begin{figure}
	\center
	\begin{tikzpicture}[scale=0.8]
	\draw (5.5, 4) node [anchor = south east]{$\vtx_2$};
	\draw (8, 2) node [anchor = west]{$\vtx_1$};
	\draw (2.8, 0) node [anchor = north west]{$\vtx_0$};
	\draw [-{Latex[length=2mm, width=2mm]}, very thick] ([shift = (30:1cm)] 4.1, 1.7) arc (180:400:0.5cm);
	\draw (5, 4) -- (8, 2);
	\draw (8, 2) -- (3,0);
	\draw (3, 0) edge["$e$" left, pos = 0.5] (5, 4);
	\draw (2, 4) -- ( 7, 4 );
	\draw (1, 0) -- ( 6, 0 );
	\draw (7, 0) -- ( 9, 4 );
	\draw (2, 4) edge["$\ker(\covtx_2)$" above, pos = 0.1]  ( 7, 4 );
	\draw (1, 0) edge["$\ker(\covtx_0)$" below, pos = 0.1]  ( 6, 0 );
	\draw (7, 0) edge["$\ker(\covtx_1)$" right, pos = 0.1]  ( 9, 4 );
	\end{tikzpicture}
	\caption{An affine image of a concrete decorated triangle $(R, C)$. } \label{fig:positive_parameters}
\end{figure}
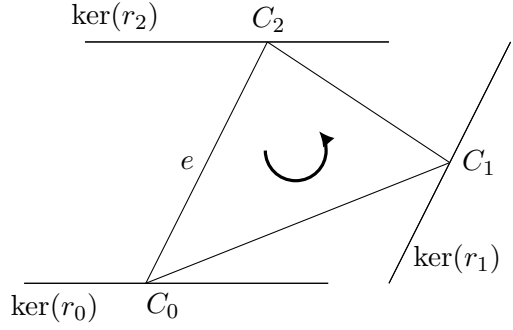

There is a natural action of $\SL_3(\R)$ on the set of concrete decorated triangles defined as follows. Let $M \in \SL_3(\R)$ and let $(R, C)$ be a concrete decorated triangle. Then we define
\[
M \cdot (R, C) := ( R \cdot M^{-1} , M \cdot C) \enspace .
\]

\begin{lem} \label{lem:abstract_triangle_equivalence}
Let $(R, C)$ and $(R', C')$ be concrete decorated triangles. The following are equivalent.
\begin{enumerate}
\item There exists $M \in \SL_3(\R)$ such that $M \cdot (R, C) = (R', C')$.
\item $\det(C) = \det(C')$ and $R\cdot C = R' \cdot C'$.
\end{enumerate}
\end{lem}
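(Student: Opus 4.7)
The plan is to unwind both conditions via elementary linear algebra. For (1)$\Rightarrow$(2) I would simply compute: if $R' = R M^{-1}$ and $C' = MC$ with $M \in \SL_3(\R)$, then $R'C' = RM^{-1}MC = RC$, and $\det(C') = \det(M)\det(C) = \det(C)$ since $\det(M) = 1$. This direction requires no subtlety.

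For (2)$\Rightarrow$(1) the key observation is that the definition of a concrete decorated triangle forces $C$ (and $C'$) to be invertible. Indeed, $\det(C) > 0$ by hypothesis, and moreover the product $RC$ is positive counter-diagonal, hence has positive determinant, so $R$ is invertible as well. The natural candidate is $M := C' C^{-1}$, which clearly satisfies $C' = MC$. Then $\det(M) = \det(C')/\det(C) = 1$ by assumption, so $M \in \SL_3(\R)$. Finally, $RC = R'C'$ combined with invertibility of $C'$ yields $R' = RC (C')^{-1} = R \, C (C')^{-1} = R M^{-1}$, as required.

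I don't anticipate any real obstacle here: both implications reduce to a few lines of matrix arithmetic, and the only place the hypotheses of being a concrete decorated triangle enter is in ensuring invertibility of the involved matrices (so that $M = C'C^{-1}$ is well-defined and the equation $R' = RM^{-1}$ can be solved) and positivity of $\det(C)$ (so that the determinant condition $\det(C)=\det(C')$ pins down $\det(M) = +1$ rather than allowing a sign issue). The uniqueness of $M$ is also immediate from $C' = MC$ together with invertibility of $C$, though it is not needed for the statement.
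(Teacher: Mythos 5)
Your proof is correct and takes essentially the same route as the paper: direct computation for the forward direction, and the explicit candidate $M = C'C^{-1}$ for the converse, with the determinant hypothesis supplying $\det(M)=1$.
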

\begin{proof}
Clearly if $M \cdot (R, C) = (R', C')$ then $C' = M \cdot C$ so $\det(C') = \det(C)$. Moreover, in this case
\[
R' \cdot C' = R \cdot M^{-1} \cdot M \cdot C = R \cdot C \enspace .
\]
Conversely, suppose that $\det(C) = \det(C')$ and $ R \cdot C = R' \cdot C'$. We define $M := C' C^{-1}$. Then clearly $M \in \SL_3(\R)$, and we have $M \cdot C = C'$ as well as 
\[
R \cdot M^{-1} = R \cdot C \cdot C'^{-1} =  R' \cdot C' \cdot C'^{-1} =  R' \enspace .
\]
Therefore $M \cdot (R, C) = (R', C')$ as required.
\end{proof}

Given a concrete decorated triangle $(R, C)$ we refer to $\det(C)$ as the \emph{triangle parameter} assigned to 
(R, C).
The value $\covtx_i \cdot \vtx_j$ is the \emph{edge parameter} associated to the oriented edge $(~ \vtx_i \mid \vtx_j ~ )$.
To each concrete decorated triangle we may therefore 
assign one triangle parameter to its (oriented) hull and one edge parameter to each of its six oriented edges.
Lemma~\ref{lem:abstract_triangle_equivalence} shows that a concrete decorated triangle is uniquely determined by these seven values up to the action of $\SL_3(\R).$

\begin{lem} \label{lem:next_vertex}
Fix a concrete decorated triangle $(R, C)$, where
\[
C= ( \vtx_0 \mid  \vtx_1  \mid \vtx_2 ) \quad \text{and} \quad R = ( \covtx_0 \mid\mid \covtx_1 \mid \mid \covtx_2  ) \enspace .
\]
Then each choice of values $e_{03}, e_{23}, A_{023} \in \R$ determines a unique vector $\vtx_3 \in \R^3$ satisfying
\begin{enumerate}[label=(\roman*)]
\item $e_{03} = \covtx_0 \cdot \vtx_3$ ,
\item $e_{23} = \covtx_2 \cdot \vtx_3$ and
\item $A_{023} = \det(\vtx_0 \mid \vtx_2 \mid \vtx_3)$ .
\end{enumerate}
The so obtained map $(e_{03}, e_{23}, A_{023}) \mapsto \vtx_3$ defines a linear automorphism of $\R^3$.
If $X$ denotes a nonzero vector in $\ker ( \covtx_0 ) \cap \ker ( \covtx_2 ) $ such that $\det(C_0 \mid C_2 \mid X) > 0$, then the values $e_{03}$, $e_{23}, A_{023}$ are strictly positive if and only if $C_3$ is contained in the interior of the positive cone 
\[\pos (C_0, C_2, X) = \{ a C_0 + b C_2 + c X \mid a, b, c >0\}.\]
\end{lem}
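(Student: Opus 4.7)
The plan is to recognise the assignment
\[
\Phi \co \vtx_3 \;\longmapsto\; \bigl(\covtx_0 \cdot \vtx_3,\; \covtx_2 \cdot \vtx_3,\; \det(\vtx_0 \mid \vtx_2 \mid \vtx_3)\bigr)
\]
as a linear map $\R^3 \to \R^3$. All three coordinates are manifestly linear in $\vtx_3$, so once $\Phi$ is shown to be a linear automorphism, the existence and uniqueness claims (i)--(iii), together with the linearity of the inverse assignment $(e_{03}, e_{23}, A_{023}) \mapsto \vtx_3$, are immediate. The positivity statement will then drop out by expressing $\vtx_3$ in a convenient basis.

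To show that $\Phi$ has trivial kernel, I first want to check that $\covtx_0$ and $\covtx_2$ are linearly independent. This follows from the definition of a concrete decorated triangle: $\covtx_0 \cdot \vtx_0 = 0$ (a diagonal entry of $R\cdot C$) while $\covtx_2 \cdot \vtx_0 > 0$, so $\covtx_0$ cannot be a scalar multiple of $\covtx_2$. Hence $\ker(\covtx_0) \cap \ker(\covtx_2)$ is one-dimensional, spanned by the vector $X$ in the hypothesis. Next, I want to show that $\{\vtx_0, \vtx_2, X\}$ is a basis of $\R^3$. If $X = a\vtx_0 + b\vtx_2$, pairing with $\covtx_0$ gives $0 = b(\covtx_0 \cdot \vtx_2)$, forcing $b=0$ since $\covtx_0\cdot\vtx_2 > 0$; pairing with $\covtx_2$ then forces $a=0$, contradicting $X \neq 0$. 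With the basis in hand, $\Phi(\vtx_3) = 0$ implies $\vtx_3 \in \R X$, say $\vtx_3 = \lambda X$, and the third coordinate becomes $\lambda \det(\vtx_0\mid\vtx_2\mid X) \ne 0$ unless $\lambda = 0$. Thus $\ker \Phi = 0$ and $\Phi$ is a linear automorphism of $\R^3$.

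For the positivity characterisation, write $\vtx_3 = a\vtx_0 + b\vtx_2 + c X$ in this basis. A direct computation using $\covtx_0 \cdot \vtx_0 = 0 = \covtx_0 \cdot X$ and $\covtx_2 \cdot \vtx_2 = 0 = \covtx_2 \cdot X$ yields
\[
e_{03} = b(\covtx_0 \cdot \vtx_2), \qquad e_{23} = a(\covtx_2 \cdot \vtx_0), \qquad A_{023} = c\,\det(\vtx_0 \mid \vtx_2 \mid X),
\]
and each parenthesised scalar is strictly positive by the concrete decorated triangle conditions and the sign convention chosen for $X$. Therefore $e_{03}, e_{23}, A_{023} > 0$ if and only if $a, b, c > 0$, i.e.\ $\vtx_3 \in \pos(\vtx_0, \vtx_2, X)$.

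The only mildly delicate point is verifying that $\{\vtx_0, \vtx_2, X\}$ is a basis; once this is established, the positivity computation is a one-line diagonal calculation since the map $\Phi$ becomes literally diagonal in this basis (with strictly positive diagonal entries). The rest is straightforward linear algebra extracted from the defining properties of a concrete decorated triangle.
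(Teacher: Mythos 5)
Your proof is correct. You recognize the same linear map that the paper calls $M$, namely $\vtx_3 \mapsto (\covtx_0\cdot\vtx_3,\ \covtx_2\cdot\vtx_3,\ \det(\vtx_0\mid\vtx_2\mid\vtx_3))$, but you handle it by a genuinely different route. The paper rewrites the third coordinate as $\vtx_3\cdot(\vtx_0\times\vtx_2)$, stacks the three row vectors into an explicit $3\times 3$ matrix $M$, and exhibits $M^{-1}$ as a product of a diagonal matrix and the matrix with columns $\vtx_2$, $\vtx_0$, $\covtx_2\times\covtx_0$; the positivity of $\vtx_3$'s coordinates then drops out of the sign pattern of that inverse and the observation that one may take $X=\covtx_2\times\covtx_0$. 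You instead avoid cross products entirely: you verify invertibility by showing $\ker\Phi=0$ (using that $\covtx_0,\covtx_2$ are independent and that $\det(\vtx_0\mid\vtx_2\mid X)\neq 0$), and then change basis to $\{\vtx_0,\vtx_2,X\}$, in which $\Phi$ is literally diagonal with entries $\covtx_0\cdot\vtx_2$, $\covtx_2\cdot\vtx_0$ and $\det(\vtx_0\mid\vtx_2\mid X)$, all positive. The tradeoff: the paper's route yields an explicit closed-form inverse that could be plugged into later computations, while yours is coordinate-free, shorter to check, and makes the positivity equivalence transparent because it reads off immediately from a positive diagonal matrix. Both are sound; you should perhaps note explicitly (or cite) that $\vtx_0,\vtx_2$ are independent because $\det(C)>0$, which you use implicitly when asserting that $\{\vtx_0,\vtx_2,X\}$ is a basis.
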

\begin{proof}
First note that condition (iii) may be equivalently expressed by using the Euclidean cross product as $A_{023} = \vtx_3 \cdot (\vtx_0 \times \vtx_2)$.
Thus $\vtx_3$ is determined by the 3$\times$3 linear system
\[
\underbrace{
\left(
\begin{array}{ccc}
\rule[.5ex]{2.5ex}{0.5pt} & \covtx_0 & \rule[.5ex]{2.5ex}{0.5pt} \\
\rule[.5ex]{2.5ex}{0.5pt} & \covtx_2 & \rule[.5ex]{2.5ex}{0.5pt} \\
\rule[.5ex]{2.5ex}{0.5pt} & \vtx_0 \times \vtx_2 & \rule[.5ex]{2.5ex}{0.5pt}
\end{array}
\right) }_{=: M}
 \cdot \vtx_3 = 
\left(
\begin{array}{c}
e_{03}\\
e_{23}\\
A_{023}
\end{array}
\right) \enspace .
\]
By definition of concrete decorated triangles $M$ is invertible with inverse 
\[
M^{-1} =
\left(
\begin{array}{ccc}
\tfrac{1}{e_{03}} & 0 & 0 \\
0 & \tfrac{1}{e_{23}} &  0 \\
0 & 0 & \tfrac{1}{e_{03} e_{23}}
\end{array}
\right) \cdot
\left(
\begin{array}{ccc}
\rule[-1ex]{0.5pt}{2.5ex} & \rule[-1ex]{0.5pt}{2.5ex} & \rule[-1ex]{0.5pt}{2.5ex} \\
\vtx_2 & \vtx_0 & \covtx_2 \times \covtx_0 \\
\rule[-1ex]{0.5pt}{2.5ex} & \rule[-1ex]{0.5pt}{2.5ex} & \rule[-1ex]{0.5pt}{2.5ex}
\end{array}
\right) \enspace .
\]
This proves the first statement.
The second claim follows since in fact $X$ can be chosen as $\covtx_2 \times \covtx_0$.
Hence, in case of positivity of $e_{03}, e_{23}, A_{023}$ the vector $\vtx_3=M^{-1}\cdot (e_{03}, e_{23}, A_{023})$ is a positive linear combination of the columns of $M^{-1}$.
\end{proof}

	\begin{rem}
		In the situation of Lemma~\ref{lem:next_vertex}. Figure~\ref{fig:edge_parameter} depicts an affine plane in $\RP^2$ containing the projections of the $\vtx_i$'s and $X$. The condition that $e_{03}$, $e_{30}$ and $A_{023}$ are all strictly positive is equivalent to the condition that $\vtx_3$ lies inside the convex hull of the projections of $\vtx_0$, $\vtx_2$ and $X$. 
	\end{rem}

\begin{figure}[ht]
	\centering
	\begin{tikzpicture}[scale=0.8]
	\draw [-{Latex[length=2mm, width=2mm]}, very thick] ([shift = (30:1cm)] 1, 1.5) arc (180:400:0.5cm);
	\draw [-{Latex[length=2mm, width=2mm]}, very thick] ([shift = (30:1cm)] 4.3, 1.9) arc (180:400:0.5cm);
	
	\draw (0, 2) -- (5, 4)
	node [circle, fill=black, inner sep = 0pt, minimum size = 4pt, label = left: {$C_3$}, pos = 0] () {};
	
	\draw (5, 4) -- (8, 2)
	node [circle, fill=black, inner sep = 0pt, minimum size = 4pt, label = above: {$C_2$}, pos = 0] () {};
	
	\draw (8, 2) -- (3,0)
	node [circle, fill=black, inner sep = 0pt, minimum size = 4pt, label = right: {$C_1$}, pos = 0] () {};
	
	\draw (3, 0) -- (0, 2);
	
	\draw (3, 0) -- (5,4)
	node [circle, fill=black, inner sep = 0pt, minimum size = 4pt, label = below: {$\vtx_0$}, pos = 0] () {};
	
	\draw ( 4, -0.2148257142) -- (-5, 1.7142857143) 
	node [ label = below: {$\ker(r_0)$}, pos = 0.5] () {}
	node [circle, fill=black, inner sep = 0pt, minimum size = 4pt, label = below: {$X$}, pos = 0.883] () {};
	
	\draw (6, 4.277777) -- (-5, 1.222222)
	node [ label = above: {$\ker(r_2)$}, pos = 0.5] () {};
	
	\end{tikzpicture}
	\caption{The location of $\vtx_3$ is determined by the values of $\covtx_0 \cdot \vtx_3$, $\covtx_2 \cdot \vtx_3$ and $\det(\vtx_0 \mid \vtx_2 \mid \vtx_3)$.} \label{fig:edge_parameter}
\end{figure}
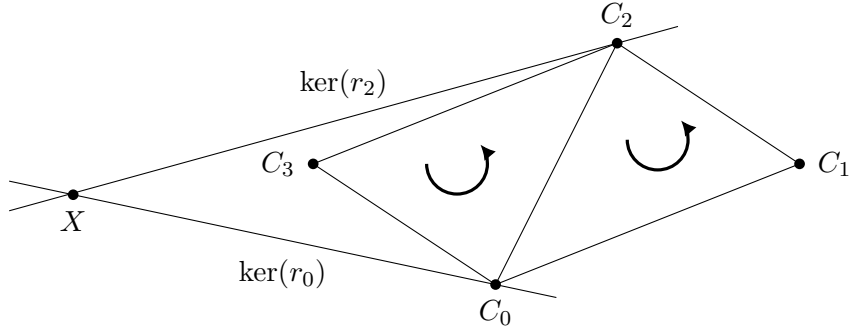

A \emph{concrete decorated edge} of a concrete decorated triangle $(R, C)$ is an \emph{unordered} pair of concrete decorated vertices of $(R, C).$
Figure \ref{fig:edge_parameter} depicts two concrete decorated triangles
\begin{align*}
( R, C ) &= (~ ( \covtx_0 \mid \mid \covtx_1 \mid \mid \covtx_2) ~,~ ( \vtx_0 \mid \vtx_1 \mid \vtx_2 )~ ) \enspace ,\\
( R', C') &= ( ~( \covtx_0 \mid \mid \covtx_2 \mid \mid \covtx_3)~ , ~( \vtx_0 \mid \vtx_2 \mid \vtx_3 )~ ) \enspace . 
\end{align*}
Both contain the concrete decorated edge 
$\{~ (\covtx_0, \vtx_0) ~,~ (\covtx_2, \vtx_2) ~\}.$
In this case we say that the concrete decorated triangles $( R, C )$ and $( R', C') $ \emph{share an edge}.

Note here that, by definition, the two adjacent triangles $( R, C )$ and $( R', C') $ induce opposite orientations on the concrete decorated edge they share.
Geometrically the latter condition ensures that the plane through the origin  in $\R^3$ containing $C_0$ and $C_1$ separates $C_2$ and $C_3$.

A collection $\Lambda = \{ (R, C)_i \}_{i \in I}$ of concrete decorated triangles is called a \emph{concrete decorated triangulation} if:
\begin{enumerate}
\item Each concrete decorated edge of $\Lambda$ is shared by exactly two concrete decorated triangles of $\Lambda$, and;
\item Any two concrete decorated triangles $(R, C)$ and $(R', C')$ in $\Lambda$ are connected by a finite sequence $(R, C) = (R_0, C_0)$, $(R_1, C_1)$, \ldots , $(R_k, C_k) =(R', C') $ such that consecutive members share a concrete decorated edge.
\end{enumerate}

Recall that the hull of a concrete decorated triangle is the flat triangle in $\R^3$ spanned by its vertices. 
A \emph{facet} of $\Lambda$ is the hull of a concrete decorated triangle in $\Lambda$. 
The \emph{hull} of $\Lambda$ is defined as union of its facets, i.e.~
\[
\hull (\Lambda) = \bigcup_{ i \in I } \hull (R, C)_i \enspace .
\]
It follows from the definition of $\Lambda$ that $\hull (\Lambda)$ is a connected subset of $\R^3.$

\begin{lem}[Edge flip] \label{lem:edge_flip}
Suppose that the following pairs are concrete decorated triangles.
\begin{align*}
( R, C ) &= (~ ( \covtx_0 \mid \mid \covtx_1 \mid \mid \covtx_2) ~,~ ( \vtx_0 \mid \vtx_1 \mid \vtx_2 )~ ) \enspace ,\\
( R', C') &= ( ~( \covtx_0 \mid \mid \covtx_2 \mid \mid \covtx_3)~ , ~( \vtx_0 \mid \vtx_2 \mid \vtx_3 )~ ) \enspace .
\end{align*}
Then the following pairs are also concrete decorated triangles.
\begin{align*}
( S, D ) = (~ (\covtx_3 \mid \mid \covtx_1 \mid \mid \covtx_2 ) ~, ~(\vtx_3 \mid \vtx_1 \mid \vtx_2 )~ ) \enspace , \\
( S', D' ) = ( ~(\covtx_0 \mid \mid \covtx_1 \mid \mid \covtx_3 )~ , ~(\vtx_0 \mid \vtx_1 \mid \vtx_3 )~ ) \enspace .
\end{align*}
(Note the orientations of $(S, D)$ and $(S^\prime, D^\prime)$ induced by $(R, C)$ and $(R^\prime, C^\prime)$.)
\end{lem}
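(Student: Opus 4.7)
I will verify the two defining conditions of a concrete decorated triangle for each of $(S, D)$ and $(S', D')$. By construction, the diagonal equalities $r_0 \cdot C_0 = r_1 \cdot C_1 = r_2 \cdot C_2 = r_3 \cdot C_3 = 0$ and the positivity of every edge parameter appearing in $(R,C)$ or $(R', C')$ carry over for free. The only off-diagonal pairings in $S \cdot D$ and $S' \cdot D'$ that are not among these are $r_1 \cdot C_3$ and $r_3 \cdot C_1$, and the only two determinants to be checked are $\det(C_3 \mid C_1 \mid C_2)$ and $\det(C_0 \mid C_1 \mid C_3)$.

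The key move is a change of basis. Since $r_0$ and $r_2$ are linearly independent, $X := r_0 \times r_2$ is a nonzero element of $\ker r_0 \cap \ker r_2$, and $\{C_0, C_2, X\}$ is a basis of $\R^3$; after replacing $X$ by $-X$ if necessary, I arrange that $\det(C_0 \mid C_2 \mid X) > 0$. Expanding
\[
C_1 = \alpha C_0 + \beta C_2 + \gamma X, \qquad C_3 = a C_0 + b C_2 + c X,
\]
and applying $r_0$ and $r_2$ to both sides using the given positive edge parameters forces $\alpha, \beta, a, b > 0$. The triangle parameters then fix the $X$-coefficients: $\det(C_0 \mid C_2 \mid C_1) = -\det(C_0 \mid C_1 \mid C_2) < 0$ gives $\gamma < 0$, while $\det(C_0 \mid C_2 \mid C_3) > 0$ gives $c > 0$. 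This is the algebraic version of the geometric condition, remarked on before the lemma, that $C_1$ and $C_3$ lie on opposite sides of the plane through $0$, $C_0$, $C_2$, and it is exactly the input delivered by Lemma~\ref{lem:next_vertex}.

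With these signs in hand, each verification is a one-line expansion. Using $r_1 \cdot C_1 = 0$ I obtain $r_1 \cdot X = -(\alpha(r_1 \cdot C_0) + \beta(r_1 \cdot C_2))/\gamma > 0$, and then
\[
r_1 \cdot C_3 = a(r_1 \cdot C_0) + b(r_1 \cdot C_2) + c(r_1 \cdot X) > 0.
\]
Symmetrically, $r_3 \cdot C_3 = 0$ gives $r_3 \cdot X < 0$, and $r_3 \cdot C_1 = \alpha(r_3 \cdot C_0) + \beta(r_3 \cdot C_2) + \gamma(r_3 \cdot X) > 0$, the final term being a product of two negatives. For the determinants, multilinearity together with $\det(C_i \mid C_j \mid C_i) = 0$ yields
\[
\det(C_3 \mid C_1 \mid C_2) = a \det(C_0 \mid C_1 \mid C_2) + c\alpha \det(C_0 \mid C_2 \mid X),
\]
\[
\det(C_0 \mid C_1 \mid C_3) = b \det(C_0 \mid C_1 \mid C_2) + c\beta \det(C_0 \mid C_2 \mid X),
\]
both of which are manifestly positive. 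The main obstacle is not conceptual but notational: tracking four vectors, four covectors, and the signs arising from the interplay of edge and triangle parameters. Choosing the common basis $\{C_0, C_2, X\}$, in which both $C_1$ and $C_3$ are simultaneously expanded, makes the bookkeeping uniform and reduces the lemma to inspecting three scalar signs.
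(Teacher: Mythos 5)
Your proof is correct, but it is organized around a genuinely different basis than the paper's. The paper expands only $\vtx_3$ in the basis $\{\vtx_0,\vtx_1,\vtx_2\}$ coming from the first triangle: the two positive determinants force $\beta < 0$, the two positive edge parameters force $\alpha,\gamma > 0$, and then $\covtx_1\cdot \vtx_3$, $\det(D)$, $\det(D')$ each fall out of a one-line expansion, with $\covtx_3\cdot\vtx_1 > 0$ obtained by a symmetric argument from $\covtx_3\cdot\vtx_3 = 0$. You instead work in the basis $\{C_0, C_2, X\}$ with $X\in\ker r_0\cap\ker r_2$, expand \emph{both} $C_1$ and $C_3$, and read off a uniform sign table $(\alpha,\beta,a,b,c>0, \gamma<0)$; the extra effort is the need to determine the signs of $r_1\cdot X$ and $r_3\cdot X$ from $r_1\cdot C_1 = 0$ and $r_3\cdot C_3 = 0$ before the edge-parameter verifications close, whereas the paper never has to mention $X$. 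What your route buys is symmetry in $(R,C)\leftrightarrow(R',C')$ and a transparent match with the geometric remark before the lemma and with Lemma~\ref{lem:next_vertex}, since $\{C_0,C_2,X\}$ is exactly the frame that lemma singles out for the shared edge; the paper's route is shorter but somewhat more asymmetric, with the verification of $\covtx_3 \cdot \vtx_1 > 0$ following a slightly different pattern than that of $\covtx_1 \cdot \vtx_3 > 0$. Both arguments are sound and of comparable difficulty.
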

\begin{proof}
We need to show positivity of the new triangle parameters $\det ( D )$ and $\det ( D' )$ as well as positivity of the new edge parameters $\covtx_1 \cdot \vtx_3$ and $\covtx_3 \cdot \vtx_1$.
The fact that $(R, C)$ is a concrete decorated triangle ensures that $\{ \vtx_0, \vtx_1, \vtx_2 \}$ is a basis. Therefore we may write $\vtx_3 = \alpha \vtx_0 + \beta \vtx_1 + \gamma \vtx_2$. Since $\det(C)>0$ and $\det(C') >0$ we have $ \beta <0$. Therefore, since $\covtx_0 \cdot \vtx_3 >0$ and $\covtx_2 \cdot \vtx_3 > 0 $ we have $\alpha >0$ and $\gamma > 0$. It follows that
\[
\covtx_1 \cdot \vtx_3 = \covtx_1 \cdot ( \alpha \vtx_0 + \beta \vtx_1 + \gamma \vtx_2 ) = \alpha \covtx_1 \cdot \vtx_0 + \gamma \covtx_1 \cdot \vtx_2 > 0 \enspace .
\]
By symmetry we also get $\covtx_3 \cdot \vtx_1 > 0$.
Furthermore, the triangle parameters can be expressed as
\begin{align*}
\det(D) &= \det( \alpha \vtx_0 + \beta \vtx_1 + \gamma \vtx_2  \mid \vtx_1 \mid \vtx_2 ) = \alpha \det(C) > 0  \enspace , \\
\det(D') &= \det(\vtx_0 \mid \vtx_1 \mid \alpha \vtx_0 + \beta \vtx_1 + \gamma \vtx_2) = \gamma \det(C) > 0 \enspace .
\end{align*}
This completes the proof.
\end{proof}

Let $\Lambda$ be a concrete decorated triangulation. Using the notation from the Lemma~\ref{lem:edge_flip}, the process of removing the two adjacent concrete decorated triangles $(R, C), (R', C')$ from $\Lambda$ and inserting the concrete decorated triangles $(S, T), (S', T')$, resulting in a new concrete decorated triangulation $\Lambda'$, is called an \emph{edge flip} on $\Lambda$ 
along the edge $\{ ~(r_0, C_0)~, (r_2, C_2)~ \}$.

\begin{lem} \label{lem:one_side_of_kernel}
Let $\Lambda = \{ (R, C)_i \}_{i \in I}$ be a concrete decorated triangulation.
Suppose $(\covtx, \vtx)$ and $(\covtx', \vtx')$ are distinct concrete decorated vertices of concrete decorated triangles in $\Lambda.$ Then $\covtx \cdot \vtx' > 0.$
\end{lem}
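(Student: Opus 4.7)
The plan is to induct on the combinatorial distance $n$ in the dual graph of $\Lambda$ between a triangle containing $(\covtx, \vtx)$ and one containing $(\covtx', \vtx')$. Property~(2) in the definition of concrete decorated triangulation ensures $n$ is finite. The base case $n = 0$ is immediate: if both decorated vertices belong to a single concrete decorated triangle, then $\covtx \cdot \vtx' > 0$ by definition (the matrix $R \cdot C$ of that triangle is positive counter-diagonal).

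For the inductive step, I would fix a shortest chain $T_0, T_1, \ldots, T_n$ of concrete decorated triangles with consecutive members sharing a concrete decorated edge. By minimality, $(\covtx', \vtx')$ must be the vertex $W_4$ of $T_n$ that is not shared with $T_{n-1}$. Let $T_{n-1}$ have vertices $W_1, W_2, W_3$ with $\{W_1, W_2\}$ the shared edge. Expanding $W_4 = \alpha W_1 + \beta W_2 + \gamma W_3$ in the basis given by $T_{n-1}$, the orientation analysis from the proof of Lemma~\ref{lem:edge_flip} yields $\alpha, \beta > 0$ and $\gamma < 0$, and the positivity of $r_{W_1} \cdot W_4$ and $r_{W_2} \cdot W_4$ for the triangle $T_n$ provides explicit lower bounds on $\alpha, \beta$ in terms of $|\gamma|$. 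If $\vtx \in \{W_1, W_2\}$, then $\vtx$ and $\vtx'$ both lie in $T_n$ and the conclusion is immediate; if $\vtx = W_3$, then $\covtx \cdot W_3 = 0$ and $\covtx \cdot W_4 = \alpha (\covtx \cdot W_1) + \beta (\covtx \cdot W_2)$ is positive by the inductive hypothesis.

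The substantive case is $\vtx \notin T_{n-1}$. The inductive hypothesis then gives $\covtx \cdot W_i > 0$ for $i = 1, 2, 3$, but the negative coefficient $\gamma$ obstructs immediate positivity of $\covtx \cdot W_4$. Substituting the lower bounds on $\alpha, \beta$ and clearing positive factors, the inequality $\covtx \cdot W_4 > 0$ becomes equivalent to
\[
\det(\covtx \mid\mid r_{W_1} \mid\mid r_{W_2}) \cdot \det(W_1 \mid W_2 \mid W_3) > 0.
\]
Since the second determinant equals $\det(T_{n-1}) > 0$, this reduces the claim to establishing positivity of the covector determinant. The main obstacle will be this covector-positivity step: when $\vtx, W_1, W_2$ are vertices of a common concrete decorated triangle, it follows from $\det(R \cdot C) > 0$ for the positive counter-diagonal matrix $R \cdot C$ (the calculation already appearing in the proof of Lemma~\ref{lem:edge_flip}); in general, it requires running a parallel induction along the chain that tracks the sign of $\det(\covtx \mid\mid r_{W_i} \mid\mid r_{W_j})$ for edges $\{W_i, W_j\}$ of successive triangles, using the orientation-consistency of consecutive triangles and the invariance of these determinants' signs under edge flips.
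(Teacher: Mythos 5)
Your induction is set up differently from the paper's, and the difference is exactly where the gap lies. The paper's proof (like yours) induces on combinatorial distance, but the inductive step works by \emph{edge flipping} $\Lambda$: invoking Lemma~\ref{lem:edge_flip} to pass to a new concrete decorated triangulation $\Lambda'$ with the same decorated vertices but smaller distance from $\vtx$ to $\vtx'$. The key point is that $\covtx \cdot \vtx'$ depends only on the decorated vertices, not on how they are grouped into triangles, so it is literally unchanged when passing from $\Lambda$ to $\Lambda'$; after finitely many flips, $\vtx$ and $\vtx'$ lie in a common triangle, and positivity is immediate from the definition of a concrete decorated triangle. You instead keep $\Lambda$ fixed and try to propagate positivity along a chain via $W_4 = \alpha W_1 + \beta W_2 + \gamma W_3$ with $\gamma < 0$. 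Your algebra is correct: substituting the lower bounds $\alpha > |\gamma|\,\tfrac{\covtx_{W_2}\cdot W_3}{\covtx_{W_2}\cdot W_1}$, $\beta > |\gamma|\,\tfrac{\covtx_{W_1}\cdot W_3}{\covtx_{W_1}\cdot W_2}$ does reduce $\covtx\cdot W_4 > 0$ to $\det(\covtx \mid\mid \covtx_{W_1} \mid\mid \covtx_{W_2})\cdot\det(W_1\mid W_2\mid W_3) \ge 0$, and the second factor is $\det(T_{n-1}) > 0$.

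The gap is the first factor. The claim $\det(\covtx \mid\mid \covtx_{W_1} \mid\mid \covtx_{W_2}) > 0$ for an arbitrary decorated vertex $\covtx$ and an arbitrary shared edge $\{W_1, W_2\}$ of $\Lambda$ is not established, and it is not a minor technicality: expanding $\covtx$ in the basis $\covtx_{W_1}, \covtx_{W_2}, \covtx_{W_3}$, the coefficient of $\covtx_{W_3}$ is what needs to be positive, and controlling its sign runs into exactly the same indeterminacy you started with — it is a dual statement of comparable strength to the lemma itself. Your closing remark that the covector determinants are invariant under edge flips is the right observation, but the efficient use of that invariance is the paper's: flip to shorten the chain so you never need to estimate $\covtx\cdot W_4$ through a basis with a negative coefficient at all, rather than running a second induction whose base and inductive step would both need to be supplied.
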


\begin{proof}
Let $(R, C) \in \Lambda$ be a concrete decorated triangle containing the concrete flag $(\covtx,\vtx)$.  Throughout this proof we will say that $\vtx'$ is at distance $0$ from $\vtx$ if $\vtx'$ and $\vtx$ are contained in a common triangle in $\Lambda$. If $\vtx'$ and $\vtx$ are not at distance $0$ then we will say that $\vtx'$ and $\vtx$ are at distance $m>0$ if $m$ is the smallest integer such that $\vtx'$ is contained in a common triangle with a vertex $\vtx_k$ such that $\vtx$ and $\vtx_k$ are at distance $m-1$.
We proceed by induction on the minimal possible distance between $\vtx$ and $\vtx'$ in any concrete decorated triangulation such that $\covtx \cdot \vtx' \leq 0$ . 

If $\vtx'$ and $\vtx$ are at distance $0$ then $\covtx \cdot \vtx' > 0$ since $\vtx$ and $\vtx'$ are contained in a common concrete decorated triangle in $\Lambda$.
In case $\vtx'$ and $\vtx$ are at distance $1$ it follows from Lemma~\ref{lem:edge_flip} that we may perform an edge flip on the common edge of the triangles to obtain a new concrete decorated triangulation in which the vertices $\vtx$ and $\vtx'$ are contained in a concrete decorated triangle. Therefore $\covtx \cdot \vtx' > 0$ as required.
Now suppose by induction that $\vtx'$ is at distance $m$ from $\vtx$. Then $\vtx'$ belongs to an ideal triangle whose vertices are $\vtx'$, $\vtx_k$ and $\vtx_\ell$ such that $\vtx_k$ and $\vtx_\ell$ are at distance $m-1$ from $\vtx_0$. By performing an edge flip on the edge whose vertices are $\vtx_k$ and $\vtx_\ell$, we obtain a new concrete triangulation $\Lambda'$ in which $\vtx'$ is at distance $m-1$ from $\vtx$. By induction it must be the case that $\covtx \cdot \vtx' > 0$.
\end{proof}

\begin{cor} \label{lem:positive_parameters}
	Let $\Lambda$ be a concrete decorated triangulation. The restriction of the quotient map $\pi : \R^3 \setminus \{ 0 \} \rightarrow \RP^2$ to $\hull ( \Lambda )$ is a homeomorphism onto its image. Moreover, the image of $\pi|_{ \hull( \Lambda ) }$ is properly convex, and, in particular, the interior of $\hull ( \Lambda )$ is an open disc.
\end{cor}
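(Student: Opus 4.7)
My plan is to deduce the result from Lemma~\ref{lem:one_side_of_kernel} in three stages: (1) use the lemma to show $\hull(\Lambda)$ lies inside a proper convex cone in $\R^3$; (2) radially project $\hull(\Lambda)$ into an explicit affine chart of $\RP^2$; and (3) combine a local-injectivity argument coming from the opposite-orientation condition on shared edges with the confining half-space estimates to obtain global injectivity and proper convexity.

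For stage (1), I would observe that every $p \in \hull(\Lambda)$ is a convex combination $p = \sum a_i \vtx_i$ of the vertices of some facet, with $a_i \geq 0$. By Lemma~\ref{lem:one_side_of_kernel} each vertex covector $\covtx$ of $\Lambda$ satisfies $\covtx \cdot \vtx_i \geq 0$, with equality only when $\vtx_i$ is the vertex of $\covtx$, so $\covtx \cdot p \geq 0$ for every $p \in \hull(\Lambda)$. I would then fix a single facet and set $\covtx_\ast = \covtx_0 + \covtx_1 + \covtx_2$, the sum of its three vertex covectors. Since any vertex of $\Lambda$ is annihilated by at most one of these, $\covtx_\ast \cdot \vtx > 0$ at every vertex, and hence $\covtx_\ast \cdot p > 0$ for every $p \in \hull(\Lambda)$. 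The radial projection $v \mapsto v/(\covtx_\ast \cdot v)$ therefore sends $\hull(\Lambda)$ into the affine hyperplane $\mathcal{H} = \{v : \covtx_\ast \cdot v = 1\}$, and $\pi|_{\hull(\Lambda)}$ factors through this chart.

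For the local structure used in stage (3), every facet has $\det(C) > 0$, so its three vertex vectors are linearly independent and its affine plane misses the origin; $\pi$ therefore restricts to a homeomorphism of the facet onto a closed Euclidean triangle in $\mathcal{H}$. For two facets sharing a concrete decorated edge, the definition in \S\ref{subsec:concretes} requires them to induce opposite orientations on that edge, which, inside $\mathcal{H}$, forces the two Euclidean triangles to lie on opposite sides of the projected shared edge. Hence $\pi|_{\hull(\Lambda)}$ is a local homeomorphism across interiors of shared edges as well as interiors of facets.

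The most delicate step is upgrading this local injectivity to global injectivity and establishing proper convexity. From stage (1), the image is contained in the projectivisation $\Omega_0$ of the closed convex cone $\bigcap_{(\covtx,\vtx) \in \Lambda}\{v : \covtx \cdot v \geq 0\}$. This cone is pointed: the three covectors from a single facet span $(\R^3)^\ast$, since the corresponding $3\times 3$ matrix has zero diagonal and positive off-diagonal entries and hence positive determinant, so the intersection of their kernels is trivial. Therefore $\Omega_0$ is itself a properly convex subset of $\RP^2$. The hard part will be showing that the locally injective, continuous map $\pi|_{\hull(\Lambda)}$ is also globally injective; my intended argument proceeds inductively along chains of edge-adjacent facets, using the opposite-side property to prevent any ``folding'' of the developed image, while the confinement inside the properly convex $\Omega_0$ rules out any ``wrapping around''. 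Once injectivity is known, proper convexity of the image is inherited from $\Omega_0$, and the interior of $\hull(\Lambda)$, being homeomorphic to the interior of a properly convex domain in $\RP^2$, is an open disc.
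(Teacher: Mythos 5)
Your stage~(1) containment argument is correct and is in fact the same first step the paper takes: Lemma~\ref{lem:one_side_of_kernel} is used to show $\hull(\Lambda)$ sits inside a pointed convex cone, so it projects into an affine chart and into a properly convex region. Your observation that the three covectors of a single facet already span $(\R^3)^\ast$ (since $\det R>0$) is a clean way to see the cone is pointed, and your choice of $\covtx_\ast$ as the sum of those covectors is a tidy explicit chart. The local-homeomorphism analysis in stage~(2)--(3) across facet interiors and shared edges is also fine. The real divergence from the paper is in how the two arguments upgrade this to a global homeomorphism. The paper closes the argument in one stroke by appealing to Lemma~\ref{lem:next_vertex}: positivity of the parameters forces the ``next'' vertex $\vtx_3$ strictly into the interior of $\pos(\vtx_0,\vtx_2,X)$, i.e.\ strictly beyond the shared edge and on the correct side, and this is what ensures the projected facets tile out the projection of the convex cone generated by the vertices. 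Convexity of the image then gives injectivity.

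Your stage~(3), by contrast, leaves the key step as an announced induction whose mechanism is not spelled out, and the heuristic you give for it is not by itself sufficient. ``Confinement inside the properly convex $\Omega_0$ rules out any wrapping around'' is false for a general locally injective continuous map into a convex region -- a spiral strip is locally injective, satisfies a pairwise opposite-side property along each seam, and stays in a disc, yet is not globally injective. What actually rescues the argument is a consequence of convexity you do not invoke: each vertex of $\Lambda$ projects to a \emph{boundary} point of $\Omega_0$ (its partner covector vanishes there), and at a boundary point of a convex set the ambient angle is at most $\pi$, so the fan of projected triangles around any vertex cannot over-wind; one then still has to turn this local angle bound into global injectivity. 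This is precisely the content that Lemma~\ref{lem:next_vertex} packages, so the gap in your proposal is best closed by appealing to that lemma directly (as the paper does) rather than to an unspecified induction along chains of facets.
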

\begin{proof}
	Let $(R, C) = (~(\covtx_0 \mid \mid \covtx_1 \mid \mid \covtx_2 )~,~ (\vtx_0 \mid \vtx_1 \mid \vtx_2 ) ~)$   be a concrete decorated triangle in $\Lambda$ as depicted in Figure \ref{fig:positive_parameters}. 
	Lemma~\ref{lem:one_side_of_kernel} shows that if $V$ is any vertex other than $\vtx_0$ of any concrete decorated triangle in $\Lambda$, then $\covtx_0 \cdot V > 0$. 
	Applying this to every covector $\covtx_i$ of $(R , C)$ it follows that $\hull ( \Lambda )$ is contained in the intersection of three distinct half-spaces and therefore $\pi ( \hull ( \Lambda ))$ is strictly contained in an affine patch. 
	Moreover, by Lemma~\ref{lem:next_vertex}, $\pi ( \hull ( \Lambda ))$ is equal to the projection of the convex cone generated by the vertices of $\Lambda$ and will therefore appear as a convex set inside that affine patch, completing the proof.
\end{proof}

A \emph{decorated isomorphism} between two concrete decorated triangulations $ \Lambda $ and $ \Lambda' $ is a bijection $\phi : \Lambda \rightarrow \Lambda'$ induced by some $M \in \SL_3 ( \R ) $ such that $\phi ( (R, C) ) = M \cdot (R, C)$ for all $(R, C) \in \Lambda$. A decorated isomorphism determines a bijection between the concrete decorated vertices of $\Lambda$ and $\Lambda'$. 

\begin{lem} \label{lem:dec_iso}
There is a decorated isomorphism between the concrete decorated triangulations $\Lambda$ and $\Lambda'$  if and only if there exists a bijection $ \phi : \Lambda \rightarrow \Lambda' $ such that for every concrete decorated triangle $ ( R , C ) \in \Lambda $, if $ \phi ( (R, C) ) = ( R' , C' ) $ then 
\begin{itemize}
\item $ \det ( C ) = \det ( C' ) $ and
\item $ R \cdot C = R' \cdot C' $.
\end{itemize}
\end{lem}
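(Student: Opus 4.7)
The forward direction is immediate from Lemma~\ref{lem:abstract_triangle_equivalence}: a decorated isomorphism is induced by a single $M \in \SL_3(\R)$, so for every triangle $(R, C)$ the identities $M \cdot (R, C) = (R M^{-1}, M C) = (R', C')$ give $\det(C) = \det(C')$ and $R \cdot C = R' \cdot C'$.

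For the converse, the plan is to apply Lemma~\ref{lem:abstract_triangle_equivalence} to each $(R, C) \in \Lambda$ to obtain a unique $M_{(R,C)} \in \SL_3(\R)$ realising $M_{(R, C)} \cdot (R, C) = \phi((R, C))$, and then to prove that this assignment is independent of the triangle. Because any two triangles of $\Lambda$ are joined by a finite chain of edge-adjacent triangles (property (2) of the definition), it suffices to verify $M_{(R_1, C_1)} = M_{(R_2, C_2)}$ whenever $(R_1, C_1)$ and $(R_2, C_2)$ share a concrete decorated edge. Suppose this shared edge is $\{(r_a, V_a), (r_b, V_b)\}$ and its image under $\phi$ is the edge $\{(r'_a, V'_a), (r'_b, V'_b)\}$ shared by $\phi((R_1, C_1))$ and $\phi((R_2, C_2))$ in $\Lambda'$. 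Writing $M_i := M_{(R_i, C_i)}$ and $N := M_1^{-1} M_2$, the relations $M_i V_a = V'_a$, $M_i V_b = V'_b$, $r_a M_i^{-1} = r'_a$ and $r_b M_i^{-1} = r'_b$ for $i = 1, 2$ yield $N V_a = V_a$, $N V_b = V_b$, $r_a N = r_a$ and $r_b N = r_b$. Hence $N$ fixes $\operatorname{span}(V_a, V_b)$ pointwise and preserves the line $L := \ker(r_a) \cap \ker(r_b)$. Since $r_a \cdot V_b > 0$ and $r_b \cdot V_a > 0$, the line $L$ is transverse to $\operatorname{span}(V_a, V_b)$, so in a basis adapted to the decomposition $\operatorname{span}(V_a, V_b) \oplus L$ the matrix $N$ is diagonal with entries $1, 1, \lambda$. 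The condition $\det N = 1$ then forces $\lambda = 1$, so $N = I$ and $M_1 = M_2$.

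The main delicate point is that the bijection $\phi$ must respect the edge-sharing structure: the concrete decorated flag $(r_a, V_a)$ shared by $(R_1, C_1)$ and $(R_2, C_2)$ in $\Lambda$ must be sent to a single concrete decorated flag $(r'_a, V'_a)$ shared by the images in $\Lambda'$, so that the four compatibility relations with a common primed target are legitimate. I read this combinatorial compatibility into the hypothesis that $\phi$ is a bijection of concrete decorated triangulations; once granted, the gluing argument above assembles the local matrices $M_{(R,C)}$ into a single $M \in \SL_3(\R)$ inducing $\phi$, completing the proof.
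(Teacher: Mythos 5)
Your forward direction matches the paper's. For the backward direction, the paper simply cites Lemma~\ref{lem:abstract_triangle_equivalence} together with the uniqueness part of Lemma~\ref{lem:next_vertex} and leaves the propagation across the triangulation implicit; you instead make the gluing step explicit by comparing the triangle-by-triangle matrices $M_{(R,C)}$ and showing that the transition $N = M_1^{-1}M_2$ across a shared edge must be the identity. The linear algebra there is sound: $N$ is the identity on $\operatorname{span}(V_a,V_b)$, preserves the transversal line $\ker r_a \cap \ker r_b$, and $\det N = 1$ forces the remaining eigenvalue to be $1$. Connectedness of $\Lambda$ then gives a single $M$. This is a correct, more self-contained rendering of the same idea; what the paper buys by invoking Lemma~\ref{lem:next_vertex} is brevity, and what you buy is transparency about why the local matrices agree.

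The delicate point you flag is real, and to your credit you name it rather than skating past it; but your resolution is not quite a proof. As literally stated, the hypothesis asks only for a set bijection $\phi : \Lambda \to \Lambda'$ matching $\det C$ and $R\cdot C$ triangle by triangle. Nothing in that hypothesis forces $\phi$ to send two triangles sharing a concrete decorated edge to two triangles of $\Lambda'$ sharing an edge, nor forces the shared flags to sit in corresponding column positions of $C_1', C_2'$, which is exactly what your four relations $M_iV_a = V_a'$, $M_iV_b = V_b'$, $r_aM_i^{-1} = r_a'$, $r_bM_i^{-1} = r_b'$ (with a common primed target for $i=1,2$) presuppose. "Reading this into the hypothesis" is an honest acknowledgement, not a derivation. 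The paper's one-line proof suppresses the same requirement, and in every place the lemma is invoked the bijection $\phi$ arises from an isotopy of the ideal triangulation of $\widetilde{S}$ and therefore does respect the incidence structure, so nothing goes wrong downstream. Still, you should record this as a strengthening of the stated hypothesis — that $\phi$ is required to respect edge-adjacency and the ordering of shared flags — rather than as a rereading of the original one.
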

\begin{proof}
The forwards direction is an immediate consequence of the definition of a decorated isomorphism together with Lemma~\ref{lem:abstract_triangle_equivalence}. The backwards direction follows from Lemma~\ref{lem:abstract_triangle_equivalence} and the uniqueness part of Lemma~\ref{lem:next_vertex}.
\end{proof}


\subsection{The $\mathcal{A}$-coordinates} \label{subsec:A-coords}

We now describe Fock and Goncharov's $\A$-coordinates~\cite{FoGo-moduli-2006} and give a self-contained proof showing that they parametrise doubly-decorated real projective structures.

Let $\Delta$ be an ideal triangulation of $S= S_{g,n}.$ and $x = (\Omega, \Gamma, \phi, (\vec_dec, \covec_dec)) \in \decTtf(S).$ 
Lift $\Delta$ to an ideal triangulation $\widetilde{\Delta}$ of the fixed universal cover of $\widetilde{S}$. 
We may then push forward $\widetilde{\Delta}$ to a straight ideal triangulation of $\Omega$ via the developing map. 
Finally, we lift the ideal triangulation of $\Omega$ to $\R^3$ such that ideal vertices $p \in \partial \Omega$
are lifted to the corresponding light-cone representatives $V_p$ in the vector decoration $\vec_dec$. 
Together with the covector decoration $\covec_dec$, we claim that this induces a concrete decorated triangulation, denoted by $\Lambda$. We note here 
that $\Lambda$ is $\Gamma$-invariant by construction.

To confirm that $\Lambda$ is a concrete decorated triangulation we first show that the triangle parameters and edge parameters of its constituent decorated triangles are positive real numbers. 
Let $(R, C)$ be a triple of decorated vectors in $\Lambda$ whose vectors lift the vertices of an ideal triangle in $\dev(\widetilde{S})$. It must be the case that the columns of $C$ are linearly independent. Indeed otherwise $\dev$ must have sent the three ideal vertices of an ideal triangle in $\widetilde{S}$ to a single projective line in $\RP^2$. In this case the interior of that triangle must be empty in $\RP^2$ so $\dev$ must not be a homeomorphism between $\widetilde{S}$ and $\Omega$.
When $\Lambda$ is constructed we use the fact that $\Omega$ inherits an orientation from $\widetilde{S}$ to impose on ordering on the vertices of $(R, C)$ such that the associated triangle parameter is positive, i.e. $\det(C) > 0$.

Since $\Omega$ is properly convex and the boundary of $S$ is empty so the arcs of $\widetilde{\Delta}$ are strictly on the interior of $\Omega$, each covector $\covtx_p \in \covec_dec$ in the covector decoration satisfies that $\covtx_p \cdot V_q > 0$ for all parabolic fixed points $q\neq p \in \partial \Omega$.  
Therefore all of the edge parameters $\Lambda$ are positive. 
Thus, $x$ induces a canonical assignment of a triangle parameters to every ideal triangle of $\Lambda$ and an edge parameters to every oriented edge of $\Lambda$. Note that, by construction, the interior of $\pi(\hull(\Lambda))$ equals $\Omega$.

Let $x_i = (\Omega_i, \Gamma_i, \phi_i, (\covec_dec_i, \vec_dec_i) ) \in \decTtf(S)$ for $i=0, 1$. Denote by $\Lambda_i$ the concrete decorated triangulation induced by $x_i$. Suppose that there is a $\hol$-equivariant decorated isomorphism $M: \Lambda_0 \rightarrow \Lambda_1$. That is, $M$ is an isomorphism such that for all concrete decorated triangles $(R, C)$ of $\Lambda_0$ we have
\[
\hol_0(\gamma) \cdot (R, C) = M^{-1} \cdot \hol_1(\gamma) \cdot M \cdot (R, C) \enspace ,
\]
where $\hol_i$ is the holonomy representation determined by $x_i$. Since $\Omega_i = \hull(\Lambda_i)$ we have the following, where $\dev_i$ is the PL developing map determined by $x_i$,
\[
M \cdot \dev_0( z ) = \dev_1 ( z ) \hspace{4mm} \forall \hspace{2mm} z \in \widetilde{S} \enspace .
\]
This shows that $\Lambda_0$ is decorated-isomorphic to $\Lambda_1$ via a hol-equivariant isomorphism if and only if $x_0$ and $x_1$ are equivalent as doubly-decorated convex projective structures.

Consider a doubly-decorated real convex projective structure $x = ( \Omega , \Gamma , \phi, (\covec_dec , \vec_dec) ) \in \decTtf(S)$. 
As explained at the beginning of this subsection, $x$ determines a lift of $\Delta$ to a concrete decorated triangulation $\Lambda \subset \R^3.$

Since $\Lambda$ is $\Gamma$-invariant, it follows from Lemma \ref{lem:abstract_triangle_equivalence} that the triangle (resp. edge) parameters of $\Lambda$ descend to an assignment of positive real numbers to the ideal triangles (resp. oriented edges) of $\Delta \subset S$.
Let $\A_\Delta \cong \R_{>0}^{T+\underline{E}}$ denote the set of all possible assignments of positive real numbers to the set of ideal triangles $T$ and the set of oriented edges 
$\underline{E}$ of $\Delta$ endowed with its natural analytic structure.
We have a well-defined map
\begin{equation} \label{eq:homeo_A_coords}
\psi_{\Delta}\co \decTtf(S) \rightarrow \A_{\Delta} \cong \R^{T+ \underline{E}}_{>0} \enspace ,
\end{equation}
which assigns to every ideal triangle and every oriented edge in $\Delta$ the triangle and edge parameters 
inherited from $\Lambda$. We refer to this as Fock and Goncharov's \emph{$\A$-coordinates} on $\decTtf(S)$ with respect to an ideal triangulation $\Delta$.
If there is no ambiguity as to the ideal triangulation to which we refer, then we write $\A = \A_\Delta$ and $\psi=\psi_{\Delta}$. 
\begin{thm} \label{thm:A-coords}
The map $\psi_{\Delta} \co \decTtf(S) \rightarrow \A_{\Delta} \cong \R^{T+ \underline{E}}_{>0}$ is a homeomorphism. 
\end{thm}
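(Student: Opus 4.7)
My plan is to verify separately that $\psi_\Delta$ is a well-defined bijection and then that it is a homeomorphism. Well-definedness has essentially been established in the paragraphs leading up to the theorem: the construction of $\Lambda$ from the lift of $\widetilde{\Delta}$ via the developing map (together with the vector decoration) depends only on the equivalence class of $x = (\Omega,\Gamma,\phi,(\covec_dec,\vec_dec))$, because the $\SL_3(\R)$-ambiguity in the lift is absorbed by the invariance of triangle and edge parameters (Lemma~\ref{lem:abstract_triangle_equivalence}), and $\Gamma$-equivariance of $\Lambda$ guarantees that the parameters descend to the finite data on $\Delta \subset S$.

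For injectivity, suppose $\psi_\Delta(x_0) = \psi_\Delta(x_1)$. Let $\Lambda_0, \Lambda_1$ be the induced concrete decorated triangulations in $\R^3$. Choosing corresponding concrete decorated triangles in each and applying Lemma~\ref{lem:abstract_triangle_equivalence} gives a single $M \in \SL_3(\R)$ identifying them. I then propagate $M$ across every shared edge using Lemma~\ref{lem:dec_iso}: the uniqueness assertion in Lemma~\ref{lem:next_vertex} forces $M$ to carry each triangle of $\Lambda_0$ onto the corresponding one of $\Lambda_1$. This yields a decorated isomorphism $M:\Lambda_0 \to \Lambda_1$. By construction $M$ is $\hol$-equivariant, so by the paragraph preceding the subsection on $\A$-coordinates, $x_0 \sim x_1$ in $\decTtf(S)$.

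For surjectivity, given any positive assignment of triangle and edge parameters, I select a single triangle $T_0$ of $\widetilde{\Delta}$ and realise its parameters by any concrete decorated triangle $(R_0,C_0)$ (unique up to $\SL_3(\R)$ by Lemma~\ref{lem:abstract_triangle_equivalence}). Because $\widetilde{\Delta}$ is connected and every pair of adjacent lifted triangles shares an oriented edge, Lemma~\ref{lem:next_vertex} lets me inductively attach a unique new vertex across each edge, producing a concrete decorated triangulation $\Lambda$ indexed by $\widetilde{\Delta}$. The map sending a triangle of $\widetilde{\Delta}$ to the centre of mass of its concrete hull, extended linearly, is the desired PL developing map. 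By Corollary~\ref{lem:positive_parameters}, its image projects onto a properly convex domain $\Omega$. Equivariance of the construction under the deck action of $\pi_1(S)$ yields a well-defined homomorphism $\hol:\pi_1(S) \to \SL_3(\R)$ preserving $\Omega$; faithfulness and discreteness follow from the injectivity of the PL developing map. The main subtlety is verifying that $\hol$ is parabolic on each peripheral subgroup so that the resulting structure lies in $\Ttf(S)$: around a lift of each puncture, all incident triangles of $\Lambda$ share the same concrete decorated vertex $(r_i,V_i)$, so any peripheral element $\gamma$ fixes both $V_i$ and $r_i$ as an actual vector and covector. Combined with strict convexity of $\Omega$ and the classification of stabilisers of boundary points of strictly convex domains (Marquis~\cite{Marquis-espaces-2010}), this forces $\hol(\gamma)$ to be unipotent, hence conjugate to the standard parabolic; after smoothing the PL developing map as in \S\ref{subsec:straight triangs}, this produces a point of $\decTtf(S)$ with the prescribed $\A$-coordinates.

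For the topological statement, continuity of $\psi_\Delta$ is immediate because the triangle and edge parameters are polynomial functions of the lifted vertex coordinates of a PL developing map and of the decoration vectors, and these depend continuously on the weak-topology class of $\dev$. Continuity of the inverse follows because the inductive construction of $\Lambda$ via Lemma~\ref{lem:next_vertex} depends smoothly on the parameters (the explicit formula for $M^{-1}$ given in that proof is rational in the edge and triangle parameters), and the subsequent straightening procedure of \S\ref{subsec:straight triangs} does not alter the image of ideal vertices. I expect the peripheral-parabolicity verification to be the main obstacle, since all other steps are essentially algebraic consequences of Lemmas~\ref{lem:abstract_triangle_equivalence}, \ref{lem:next_vertex}, and~\ref{lem:dec_iso}.
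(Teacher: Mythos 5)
Your strategy matches the paper's in outline: injectivity via Lemma~\ref{lem:abstract_triangle_equivalence} and Lemma~\ref{lem:dec_iso}, surjectivity by inductively building a concrete decorated triangulation via Lemma~\ref{lem:next_vertex}, proper convexity of the image via Corollary~\ref{lem:positive_parameters}, and continuity in both directions from the rational dependence of the construction on the parameters. The one place you flagged as the ``main obstacle'' --- the verification of peripheral parabolicity --- is indeed where your proposal has a genuine gap, and the gap is not merely in detail but in the premises of the argument.

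You invoke ``strict convexity of $\Omega$'' together with a classification of boundary stabilisers. But at the point in the surjectivity argument where the peripheral holonomy must be shown parabolic, Corollary~\ref{lem:positive_parameters} has only established that $\Omega$ is \emph{properly} convex; strict convexity of a properly convex domain $\Omega/\Gamma$ of negative Euler characteristic is, via Marquis and Cooper--Long, essentially equivalent to the peripheral holonomy being parabolic --- so presupposing it is circular. Moreover, without strict convexity, quasi-hyperbolic elements and hyperbolic elements with eigenvalue-$1$ eigendirections lying on $\partial\Omega$ cannot be excluded by a stabiliser classification alone. The paper avoids this by a direct algebraic computation: $\hol(\gamma)\cdot W = W$ forces eigenvalue $1$, which immediately eliminates the quasi-hyperbolic Jordan form; in the diagonalisable case one writes $\hol(\gamma) = \mathrm{diag}(\eta,1,\eta^{-1})$, observes that $W$ is then a multiple of the middle basis vector, and notes that preservation of the covector $w = (A,0,B)$ gives $w = w\cdot\hol(\gamma)^{-1} = (\eta^{-1}A, 0, \eta B)$, so $\eta = 1$, a contradiction. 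Your observation that $\hol(\gamma)$ fixes both $V_i$ and $r_i$ \emph{as an actual vector and covector} is exactly the right input to this computation; you should supply the Jordan-form case analysis rather than appealing to strict convexity.

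One small slip: you describe the PL developing map as ``sending a triangle of $\widetilde{\Delta}$ to the centre of mass of its concrete hull.'' What is meant (and what the paper does) is sending each ideal vertex of $\widetilde{\Delta}$ to the projection of the corresponding concrete flag and extending \emph{linearly} over each ideal triangle; the centre-of-mass phrasing does not describe a map of the right type.
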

\begin{proof}
We first show that $\psi=\psi_{\Delta}$ is injective. Suppose $x_i := ( \Omega_i , \Gamma_i , \phi_i, (\covec_dec_i , \vec_dec_i) )$ is a doubly-decorated real projective structure on $S$ for $i=0, 1$. Denote the respective developing maps and holonomy representations by $\dev_i$ and $\hol_i$ and denote the respective concrete decorated triangulations in $\R^3$ by $\Lambda_0$ and $\Lambda_1$.

Suppose that $\psi ( x_0 ) = \psi ( x_1 )$. 
This ensures that for every ideal triangle $t \in \widetilde{S}$, the triangle parameter assigned to $\dev_0(t)$ is the same as that assigned to $\dev_1(t)$. 
Similarly, if $e$ is an oriented edge of $\widetilde{S}$, the oriented edge parameter of $\dev_0(e)$ is the same as that of $\dev_1(e)$. 
It follows from Lemma \ref{lem:dec_iso} that there is a decorated isomorphism $M : \Lambda_0 \rightarrow \Lambda_1$. 
The fact that $M$ is a $\hol$-equivariant decorated isomorphism follows from the fact that $\Gamma_0$ and $\Gamma_1$ act by decorated automorphisms on $\Lambda_0$ and $\Lambda_1$ respectively, which is also immediate from Lemma \ref{lem:dec_iso}. 
It follows from the discussion at the end of  \S\ref{subsec:A-coords} that $x_0 = x_1 \in \decTtf(S)$, hence $\psi$ is injective.

We next show that $\psi$ is continuous. Recall that the topology on $\decTtf(S)$ is uniquely determined by the natural topologies on the spaces of developing maps and the vector decorations. 
Moreover, the $\A$-coordinates $\psi(x_i)$ are continuous functions of the vector and covector decorations. 
Therefore a small perturbation of $\dev$ or a small perturbation of $(\covec_dec, \vec_dec)$ results in a small perturbation of the $\A$-coordinates.

We now show that $\psi$ is surjective by constructing a doubly-decorated convex projective structure from a given assignment $\alpha \in \A$ of positive real numbers to the ideal triangles and ideal edges of $S$. Fix an ideal triangle in $\widetilde{\Delta}$ with cyclically ordered vertices $(V_0, V_1, V_2)$ whose orientation agrees with that of $\widetilde{S}$. In this process we also lift $\alpha$ to an assignment of positive real numbers to the ideal triangles and oriented ideal edges of $\widetilde{\Delta}$.

Lemma~\ref{lem:dec_iso} shows that, if $\psi$ is to be surjective then the decorated ideal triangle $(R, C)_{012}$ assigned to $(V_0, V_1, V_2)$ by $\psi^{-1}(\alpha)$ is uniquely determined up to decorated isomorphism by the positive real numbers assigned to $(V_0, V_1, V_2)$ by $\alpha$.

We define a PL homeomorphism $\widetilde{\dev}: \widetilde{S} \rightarrow \R^3 \setminus \{0\}$ in such a manner that $\pi \circ \dev$ will define a PL developing map for $\psi^{-1}(\alpha)$. We will do so by determining an extension of $\widetilde{\dev}$ to the ideal vertices of $\widetilde{\Delta}$. Once we have done so we may extend linearly to the interior to ensure that $\pi \circ \widetilde{\dev}$ is $\hol$-equivariant. Therefore we need only determine $\widetilde{\dev}$ on the ideal vertices of $\widetilde{\Delta}$. In order to ensure we obtain a well-defined doubly-decorated structure, the extension of $\widetilde{\dev}$ to the ideal vertices of $\widetilde{\Delta}$ will identify each ideal vertex $V_i$ of $\widetilde{\Delta}$ with a concrete flag $(R_i, C_i)$ in $\R^3 \setminus \{0\}$.

Lemma~\ref{lem:next_vertex} shows that, if it is possible to construct $\widetilde{\dev}$ then the concrete decorated triangle whose concrete flags are $\widetilde{\dev}(V_i)$ for $i=0,1,2$, is uniquely determined, modulo the action of $\SL_3(\R)$, by the numbers assigned to $\widetilde{\Delta}$ by $\alpha$. Moreover there exists such an equivalence class of concrete decorated triangles for any such assignment of positive real numbers. Therefore fix an arbitrary representative $\widetilde{\dev}(V_i) = (R_i, C_i)$ for $i=0,1,2$ such that the triangle parameters and oriented edge parameters assigned to this concrete decorated triangle are those assigned to the ideal triangle $(V_0, V_1, V_2)$ by $ \alpha$.

Let $V_3$ be the vertex which is not $V_0$, $V_1$ or $V_2$ but which shares an ideal triangle in $\widetilde{\Delta}$ with both $V_0$ and $V_2$. Lemma~\ref{lem:next_vertex} shows that, having fixed $\widetilde{\dev}(V_i)$ for $i=0,1,2$, the concrete flag $\widetilde{\dev}(V_3)$ is uniquely determined by the positive real numbers assigned by $\alpha$ to the ideal triangle $(V_0, V_2, V_3)$ and the oriented edges $(V_0, V_3)$, $(V_3, V_0)$, $(V_2, V_3)$ and $(V_3, V_2)$. Continuing in this manner we determine a concrete flag $\widetilde{\dev}(V) =(r, C) $ on each of the ideal vertices $V$ of $\widetilde{\Delta}$. The set of all concrete flags constitutes a $\hol$-equivariant vector decoration $\vec_dec$ and covector decoration $\covec_dec$ for the set of ideal vertices of $\widetilde{\Delta}$.

In order to show that these satisfy the definition of a double decoration given in  \S\ref{subsec:convex_projective_structures} we need to show that for each covector $r \in \covec_dec$ there is a unique $V \in \vec_dec$ such that $r \cdot V = 0$ and that $r \cdot V' > 0$ for all $V' \in \vec_dec \setminus \{ V \} $. Fix $r \in \covec_dec$. By construction there is at least one $V \in \vec_dec$ such that $r \cdot V = 0$. We denote this vector by $V_r$. The fact that $r \cdot V' > 0$ for all $V' \in \vec_dec \setminus \{ V_r \}$ is exactly the content of Lemma~\ref{lem:one_side_of_kernel}.

We now linearly extend $\widetilde{\dev}$ to the interior of each ideal triangle of $\widetilde{\Delta}$ to obtain a PL developing map 
\[
\dev = \pi \circ \widetilde{\dev} : \widetilde{S} \rightarrow \RP^2 \enspace .
\]
We denote the image of $\dev$ by $\Omega$ and note that $\Omega$ is properly convex by Lemma~\ref{lem:positive_parameters}.

The holonomy representation $\hol : \pi_1(S) \rightarrow \PGL_3(\R)$ is uniquely determined by $\dev$ and our choice of universal cover because it is the push-forward of the action of the deck transformations on $\widetilde{S}$. 
We have a holonomy group $\Gamma = \hol(\pi_1(S) )$ which is discrete and torsion-free because $\hol$ is an isomorphism onto its image by construction.
Indeed if $\hol$ were either not discrete or not faithful then it could not be the case that $\dev$ was a homeomorphism onto its image and $(\covec_dec, \vec_dec)$ could not have arisen from a choice of positive triangle and edge invariants.

The marking $\phi : \sfrac{ \widetilde{S} }{ \pi_1(S) } \rightarrow \sfrac{ \Omega }{ \Gamma }$ is constructed by pushing down the developing map. Therefore we have uniquely determined a doubly-decorated convex projective structure $(\Omega, \Gamma, \phi, (\covec_dec, \vec_dec) )$ on $S$. It is clear by construction that $\psi ( (\Omega, \Gamma, \phi, ( \covec_dec, \vec_dec) ) ) = \alpha.$ So to complete the proof that $\psi$ is surjective, we need to show that the structure is of finite volume.

Denote the developing map and holonomy representation of $(\Omega, \Gamma, \phi, ( \covec_dec, \vec_dec)$ by $\dev$ and $\hol$ respectively.
Recall from  \S\ref{subsec:convex_projective_structures} that $S$ has finite area if and only if for all non-trivial peripheral elements $\gamma \in \pi_1(S)$ we have
\begin{equation}\label{eq:standard_parabolic}
\hol(\gamma) \sim \begin{pmatrix} 1 & 1 & 0 \\ 0 & 1 & 1 \\ 0 & 0 & 1 \end{pmatrix} \enspace .
\end{equation}

Fix a cusp $K$ of $S$ and let $\gamma \in \pi_1(S)$ be a peripheral curve such that $\pi_1 ( K ) = \langle \gamma \rangle$. 
The action of $\gamma$ on $\widetilde{S}$ via deck transformations fixes a unique point $\widetilde{W}$ in $\partial_{\infty} \widetilde{S}$. 
The vector decoration $\vec_dec$ determines a light-cone vector $W \in \vec_dec$ lifting $\dev( \widetilde{W} )$. 
Included in the definition of $\decTtf(S)$ is the condition that $W$ is the unique light cone representative of $\pi(W)$ in $( \covec_dec, \vec_dec ) $ and $( \covec_dec,  \vec_dec)$ is $\Gamma$-invariant. 
In particular $\hol(\gamma) \cdot W = W$, so one of the eigenvalues of $\hol(\gamma)$ is $1$.

On the other hand, since 
$\hol(\gamma)$ is nontrivial and preserves a properly convex domain 
there are only three options for the Jordan normal form of $\hol(\gamma)$, namely
\[
\begin{pmatrix} 1 & 1 & 0 \\ 0 & 1 & 1 \\ 0 & 0 & 1 \end{pmatrix},  \hspace{2mm} \begin{pmatrix} \xi^2 & 0 & 0 \\ 0 & \xi^{-1} & 1 \\ 0 & 0 & \xi^{-1} \end{pmatrix} \hspace{2mm}  \text{or } \begin{pmatrix} \eta & 0 & 0 \\ 0 & \mu & 0 \\ 0 & 0 & (\eta \mu )^{-1}\end{pmatrix} \enspace , \text{ where } \xi, \mu, \eta > 0 \enspace ,
\]
and where  $\xi, \eta \neq 1$.
We may assume that $\hol(\gamma)$ is equal to its Jordan normal form.
In the middle case there are no eigenvectors having eigenvalue $1$ so we may eliminate this possibility. If the peripheral element is diagonalisable, then we must have $\mu = 1$ and we may assume without loss of generality that $\eta > 1$ Suppose this was the case. The $1$-eigenspace of $\hol(\gamma)$ is the set of scalar multiples of the second basis vector $e_2$. Therefore we must have $W = t e_2$ for some $t \in \R^{\times}$ and $\hol(\gamma)$ preserves a covector whose kernel contains $e_2$. Denote that covector by $w = (A, 0, B)$ for some $A, B \in \R$. The fact that $\hol(\gamma)$ preserves $w$ means exactly that
\begin{align*}
w = w \cdot \left( \hol(\gamma) \right)^{-1} = (\eta^{-1} A, 0, \eta B ) \enspace .
\end{align*}
In particular $\eta = 1$ so $\hol(\gamma)$ is trivial. This contradiction shows that the only viable option for the Jordan normal form of $\hol(\gamma)$ is the standard parabolic matrix.

This concludes the proof that $(\Omega, \Gamma, \phi, (\covec_dec, \vec_dec) ) \in \decTtf(S).$

A small perturbation of $\alpha$ will result in a corresponding small pertburation of the double-decoration $( \covec_dec, \vec_dec )$ and a small, or zero, perturbation of the resulting developing map. Therefore it is clear that $\psi^{-1}$ is continuous. Therefore $\psi$ is a homeomorphism onto its image.\end{proof}


\subsection{Change of coordinates}

The choice of an underlying triangulation 
used to construct the $\A$-coordinates may be viewed as  choosing a global coordinate chart for $\decTtf(S)$.
We now investigate the associated transition maps that arise by changing the triangulations.

As the first case, assume that two triangulations $\Delta, \Delta' \subset S$ differ only in an edge flip.
That is, there are $a,b,c,d,e \in \Delta$ such that $(a,b,e)$ and $(c,d,e)$ are two distinct triangles and the triangulation $\Delta'$ is obtained by performing an edge flip along edge $e \in \Delta$.
This flip introduces the edge $f \in \Delta'$ and the two triangles $(b,c,f)$ and $(a,f,d)$.

Fix  $\A$-coordinates $\alpha \in \A_\Delta$ describing the doubly-decorated structure $x=\psi_\Delta^{-1}(\alpha) \in \decTtf(S)$. 
We denote the triangle and edge parameters around edge $e$ that are prescribed by $\alpha$ as depicted in Figure \ref{fig:flip_Acoords}.
For instance, $a^\pm$ are the two edge parameters assigned to the edge $a \in \Delta$ and the triangles $(a,b,e)$ and $(c,d,e)$ have triangle parameters $A$ and $B$, respectively.
Let $\alpha' \in \A_{\Delta'}$ describe the same doubly-decorated structure, i.e. $\alpha'=\psi_{\Delta'}(x)$.
Since $\Delta'$ arises from $\Delta$ by flipping edge $e$, all triangle and edge parameters in $\alpha$ apart from $A,B$ and $e^\pm$ carry over to $\alpha'$.
Adopting the notation from Figure \ref{fig:flip_Acoords} and using the calculations from the proof of Lemma \ref{lem:edge_flip} the remaining triangle parameters of $\alpha'$ are given by
\begin{align}\label{eq:flip_triang}
C = \frac{Ac^+ + Bb^+}{e^-} \qquad \text{and} \qquad D = \frac{Ad^- + Ba^-}{e^+} \enspace .
\end{align}
In turn, these give the two missing edge parameters
\begin{align}\label{eq:flip_edge}
f^+ = \frac{Ca^+ + Db^-}{A} \qquad \text{and} \qquad f^- = \frac{Cd^+ + Dc^-}{B} \enspace .
\end{align}

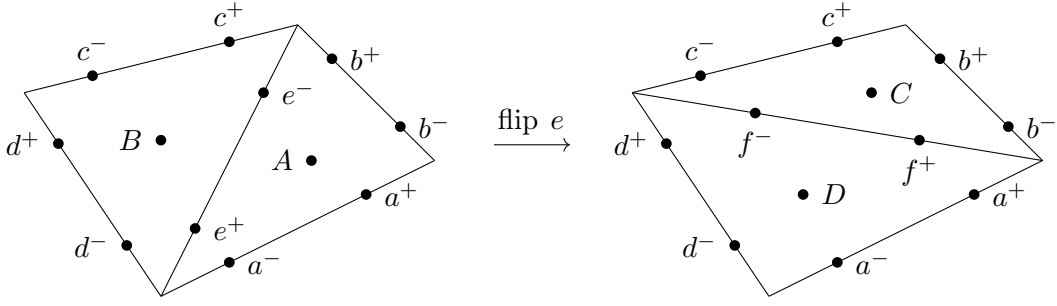
\begin{figure}
	\center
	\begin{tikzpicture}
	\node at (-4,0) {\begin{tikzpicture}[scale=0.9]
		\draw (4, 4) edge  (6, 2); 
		\draw (6, 2) edge (2,0); 
		\draw (2, 0) edge (4, 4); 
		\draw (4, 4) edge (0, 3); 
		\draw (0, 3) edge (2, 0); 
		\node [circle, fill = black, inner sep = 0pt, minimum size = 4pt, label = left : {$A$} ] (A) at (4.2, 2) {};	
		\node [circle, fill = black, inner sep = 0pt, minimum size = 4pt, label = left : {$B$} ] (B) at (2, 2.3) {};
		\node [circle, fill = black, inner sep = 0pt, minimum size = 4pt, label = right : {$e^+$} ] (ep) at (2.5, 1) {};
		\node [circle, fill = black, inner sep = 0pt, minimum size = 4pt, label = right : {$e^-$} ] (em) at (3.5, 3) {};
		\node [circle, fill = black, inner sep = 0pt, minimum size = 4pt, label = right : {$a^-$} ] (am) at (3, 0.5) {};
		\node [circle, fill = black, inner sep = 0pt, minimum size = 4pt, label = right : {$a^+$} ] (ap) at (5, 1.5) {};
		\node [circle, fill = black, inner sep = 0pt, minimum size = 4pt, label = right : {$b^+$} ] (bp) at (4.5, 3.5) {};
		\node [circle, fill = black, inner sep = 0pt, minimum size = 4pt, label = right : {$b^-$} ] (bm) at (5.5, 2.5) {};
		\node [circle, fill = black, inner sep = 0pt, minimum size = 4pt, label = above : {$c^+$} ] (cp) at (3, 3.75) {};
		\node [circle, fill = black, inner sep = 0pt, minimum size = 4pt, label = above : {$c^-$} ] (cm) at (1, 3.25) {};
		\node [circle, fill = black, inner sep = 0pt, minimum size = 4pt, label = left : {$d^+$} ] (dp) at (0.5, 2.25) {};
		\node [circle, fill = black, inner sep = 0pt, minimum size = 4pt, label = left : {$d^-$} ] (dm) at (1.5, 0.75) {};
		\end{tikzpicture}};
	\draw[->] (-0.5,0) -- (0.5,0);
	\node at (0,0.3) {flip $e$};
	\node at (4,0) {\begin{tikzpicture}[scale=0.9]
		\draw (4, 4) edge  (6, 2); 
		\draw (6, 2) edge (2,0); 
		\draw (6, 2) edge (0, 3); 
		\draw (4, 4) edge (0, 3); 
		\draw (0, 3) edge (2, 0); 
		\node [circle, fill = black, inner sep = 0pt, minimum size = 4pt, label = south : {$f^-$} ] at (1.8, 2.7) {};	
		\node [circle, fill = black, inner sep = 0pt, minimum size = 4pt, label = south : {$f^+$} ] at (4.2, 2.3) {};
		\node [circle, fill = black, inner sep = 0pt, minimum size = 4pt, label = right : {$D$} ] at (2.5, 1.5) {};
		\node [circle, fill = black, inner sep = 0pt, minimum size = 4pt, label = right : {$C$} ] at (3.5, 3) {};
		\node [circle, fill = black, inner sep = 0pt, minimum size = 4pt, label = right : {$a^-$} ] (am) at (3, 0.5) {};
		\node [circle, fill = black, inner sep = 0pt, minimum size = 4pt, label = right : {$a^+$} ] (ap) at (5, 1.5) {};
		\node [circle, fill = black, inner sep = 0pt, minimum size = 4pt, label = right : {$b^+$} ] (bp) at (4.5, 3.5) {};
		\node [circle, fill = black, inner sep = 0pt, minimum size = 4pt, label = right : {$b^-$} ] (bm) at (5.5, 2.5) {};
		\node [circle, fill = black, inner sep = 0pt, minimum size = 4pt, label = above : {$c^+$} ] (cp) at (3, 3.75) {};
		\node [circle, fill = black, inner sep = 0pt, minimum size = 4pt, label = above : {$c^-$} ] (cm) at (1, 3.25) {};
		\node [circle, fill = black, inner sep = 0pt, minimum size = 4pt, label = left : {$d^+$} ] (dp) at (0.5, 2.25) {};
		\node [circle, fill = black, inner sep = 0pt, minimum size = 4pt, label = left : {$d^-$} ] (dm) at (1.5, 0.75) {};
		\end{tikzpicture}};
	
	\end{tikzpicture}
	\caption{Flipping an edge $e$ in a triangulation introduces two new edge parameters $f^\pm$ and two new triangle parameters $C,D$ satisfying \eqref{eq:flip_edge} and \eqref{eq:flip_triang}.} \label{fig:flip_Acoords}
\end{figure}

The latter relations naturally define a real analytic isomorphism $R_{\Delta,\Delta'}: \A_\Delta \rightarrow \A_{\Delta'} $ with inverse $R_{\Delta',\Delta}$, yielding the commutative diagram

\begin{equation}
\begin{tikzpicture}
\matrix (m) [matrix of math nodes, row sep=2.3em, column sep=2.3em, nodes={anchor=center}]{
	& \decTtf(S) & \\
	\A_\Delta & & \A_{\Delta' \enspace .}\\
};
\path[-stealth] (m-1-2) edge node[above left]{\scriptsize $\psi_{\Delta}$}(m-2-1);
\path[-stealth] (m-1-2) edge node[above right]{\scriptsize $\psi_{\Delta'}$}(m-2-3);
\path[-stealth] (m-2-1) edge node[above ]{\scriptsize $R_{\Delta,\Delta'}$}(m-2-3);
\end{tikzpicture}\label{eq:cd_chart_trafo}
\end{equation}

Thus, the map $R_{\Delta,\Delta'}$ is the transition map we were aiming for.

Now consider two arbitrary triangulations $\Delta, \Delta' \subset S$. 
Choose a finite sequence of triangulations $(\Delta_0, \ldots , \Delta_m)$ such that $\Delta_0=\Delta$, $\Delta_m=\Delta'$ and two consecutive triangulations $\Delta_i,\Delta_{i+1}$ differ by an edge flip, $i = 0,\ldots,m-1$ and come with the transition map $R_{\Delta_i,\Delta_{i+1}}$.
We define the transition map $R_{\Delta,\Delta'}:\A_\Delta \rightarrow \A_{\Delta'}$ to be the concatenation 
\begin{equation}\label{eq:def_chart_transition}
R_{\Delta,\Delta'} := R_{\Delta_{m-1},\Delta_{m}} \circ \ldots \circ R_{\Delta_1,\Delta_2} \circ R_{\Delta_0,\Delta_{1}} \enspace .
\end{equation}
It follows that $R_{\Delta,\Delta'}$ is a real analytic isomorphism. Clearly, $R_{\Delta,\Delta'}$ admits the same commutative diagram \eqref{eq:cd_chart_trafo}. 

Recall from  \S\ref{subsec:Decorated and doubly decorated} that there is a natural action of $\MCG(S)$ on $\decTtf ( S ) $ by the `change of marking' map. If $\mu \in \MCG(S)$ and $\Delta$ is an ideal cell decomposition of $S$ then we denote by $\mu \circ \Delta$ the ideal cell decomposition whose edges are the image under $\mu$ of the edges of $\Delta$. We use this map and the fact that $R_{\Delta, \Delta'}$ is real analytic to obtain the following corollary.

\begin{cor}\label{cor:analytic structure}
Fix an ideal triangulation $\Delta$ of $S$ and give $\decTtf(S)$ the real analytic structure pulled back from from $\A_{\Delta}.$ Then this real analytic structure is independent of the choice of $\Delta,$ and the action of $\MCG(S)$ on $\decTtf(S)$ is analytic.
\end{cor}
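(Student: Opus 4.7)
The plan is to reduce both claims to the real analyticity of the transition maps $R_{\Delta,\Delta'}$ already established. For the first claim, the commutative diagram \eqref{eq:cd_chart_trafo} reads $\psi_{\Delta'} = R_{\Delta,\Delta'} \circ \psi_{\Delta}$, and both $R_{\Delta,\Delta'}$ and its inverse $R_{\Delta',\Delta}$ are real analytic isomorphisms of $\R^{T+\underline{E}}_{>0}$ by the explicit rational formulas \eqref{eq:flip_triang}--\eqref{eq:flip_edge} (whose denominators are everywhere positive) together with the definition \eqref{eq:def_chart_transition} as a finite composition of edge-flip transitions, which makes sense by Hatcher--Mosher. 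Hence the two charts $\psi_{\Delta}, \psi_{\Delta'}$ are analytically compatible, and the pulled-back real analytic structure on $\decTtf(S)$ is independent of the chosen triangulation.

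For the second claim, I would show that in the chart $\psi_{\Delta}$ the change of marking map $\mu^*$ is realised as a composition of a coordinate permutation with a transition map. The key observation is that the $\A$-coordinates $\psi_{\Delta}(x)$ of $x=(\Omega,\Gamma,\phi,(\covec_dec,\vec_dec))$ are computed by lifting $\Delta$ to $\widetilde{S}$ and pushing forward via the developing map of $(\dev,\hol)$ associated to $\phi$. Since $\mu^*(x)=(\Omega,\Gamma,\phi\circ\mu^{-1},(\covec_dec,\vec_dec))$ has developing map obtained by precomposing with $\mu^{-1}$, the computation of $\psi_{\Delta}(\mu^*(x))$ is identical to the computation of $\psi_{\mu^{-1}\circ\Delta}(x)$ after identifying triangles and oriented edges of $\Delta$ with those of $\mu^{-1}\circ\Delta$ via $\mu$.

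Accordingly, let $\sigma_{\mu}\co \A_{\mu^{-1}\circ\Delta}\to\A_{\Delta}$ be the tautological relabelling of coordinates induced by the bijection $\mu$ on triangles and oriented edges; it is a permutation of coordinates and hence manifestly real analytic. The previous paragraph gives $\psi_{\Delta}\circ\mu^*=\sigma_{\mu}\circ\psi_{\mu^{-1}\circ\Delta}$, so using \eqref{eq:cd_chart_trafo} we obtain
\[
\psi_{\Delta}\circ\mu^*\circ\psi_{\Delta}^{-1}=\sigma_{\mu}\circ R_{\Delta,\mu^{-1}\circ\Delta},
\]
a composition of real analytic isomorphisms of $\A_{\Delta}$, hence real analytic. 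The only mildly delicate point is the bookkeeping behind the identity $\psi_{\Delta}\circ\mu^*=\sigma_{\mu}\circ\psi_{\mu^{-1}\circ\Delta}$, which I would verify by unwinding the construction of $\psi_{\Delta}$ from \S\ref{subsec:A-coords}: a lift of $\Delta$ to $\widetilde S$ followed by $\dev$ for the structure with marking $\phi\circ\mu^{-1}$ agrees, as a labelled concrete decorated triangulation in $\R^3$, with a lift of $\mu^{-1}\circ\Delta$ followed by $\dev$ for the structure with marking $\phi$, up to the relabelling $\sigma_{\mu}$.
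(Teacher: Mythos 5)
Your argument is correct and follows the same overall strategy as the paper: reduce both claims to the real analyticity of the transition maps $R_{\Delta,\Delta'}$. The one substantive difference is that you make the coordinate relabelling $\sigma_\mu$ explicit; the paper's own proof tacitly absorbs it (it works with $\Delta'=\mu\circ\Delta$, writes the change of marking as $\phi\mapsto\mu\circ\phi$, which does not type-check, and equates $R_{\Delta,\Delta'}\circ\psi_\Delta(x)$ with $\psi_{\Delta'}(\mu^*(x))$ even though the commutative diagram only gives $R_{\Delta,\Delta'}\circ\psi_\Delta(x)=\psi_{\Delta'}(x)$). Your identity $\psi_{\Delta}\circ\mu^*=\sigma_\mu\circ\psi_{\mu^{-1}\circ\Delta}$, and hence $\psi_{\Delta}\circ\mu^*\circ\psi_{\Delta}^{-1}=\sigma_\mu\circ R_{\Delta,\mu^{-1}\circ\Delta}$, is the precise statement, and the justification you sketch — that a lift of $\Delta$ pushed forward by the developing map of $\mu^*(x)$ coincides, as a labelled concrete decorated triangulation in $\R^3$, with a lift of $\mu^{-1}\circ\Delta$ pushed forward by the developing map of $x$, up to the $\mu$-induced relabelling of triangles and oriented edges — is exactly the bookkeeping required. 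So this is the same route as the paper, executed more carefully.
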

\begin{proof}
The natural analytic structure on $\A_{\Delta}$ is pulled back to $\decTtf(S)$ via the homeomorphism $\Psi_{\Delta}$. The fact that $R_{\Delta, \Delta'}$ is a homogeneous rational homeomorphism ensures that the analytic structure thus obtained is independent of the choice of $\Delta$.
Let $\mu \in \MCG(S)$. We now show that $\mu^* : \decTtf(S) \rightarrow \decTtf(S)$ is real analytic with respect to the structure obtained above. Let $\Delta' := \mu \circ \Delta$. We have
\begin{align*}
R_{ \Delta , \Delta' } \circ \Psi_{\Delta} ( \Omega , \Gamma , \phi , ( \covec_dec, \vec_dec ) ) &= \Psi_{\Delta'} ( \Omega , \Gamma , \mu \circ \phi , ( \covec_dec, \vec_dec ) ) \\
&= \Psi_{\Delta'} \circ \mu^*  ( \Omega , \Gamma , \phi , ( \covec_dec, \vec_dec ) ) \\
\Rightarrow \Psi_{\Delta'}^{-1} \circ R_{\Delta, \Delta'} \circ \Psi_{\Delta}( \Omega, \Gamma, \phi, (\covec_dec, \vec_dec) ) &= \mu^* ( \Omega, \Gamma, \phi, (\covec_dec, \vec_dec) ) \enspace .
\end{align*}
The maps $\Psi_{\Delta}$ and $\Psi_{\Delta'}$ are bianalytic by construction and the map $R_{\Delta, \Delta'}$ is analytic. Therefore $\Psi_{\Delta'}^{-1} \circ R_{\Delta, \Delta'} \circ \Psi_{\Delta} = \mu^*$ is a real bianalytic map with respect to the analytic structure on $\decTtf(S)$ pulled back from $\A_{\Delta}$.
\end{proof}


\subsection{Matrix representations}

Fock and Goncharov \cite{FoGo-moduli-2007} parametrize the moduli space of (undecorated) convex projective structures on $S$ using the so called $\X$-coordinates. 
We will denote the moduli space of $\X$-coordinates by $\X_{\Delta}$.
See also \cite{CTT-moduli-2018} for a comprehensive construction of $\X$-coordinates.
In a manner analogous 
to the construction of $\A$-coordinates, the key idea is to assign positive real numbers to the triangles and oriented edges of $\Delta$, called \emph{triple ratios} and \emph{quadruple ratios}, respectively.

Let $\alpha \in \A_\Delta$. 
Then $\alpha$ determines a convex projective structure whose $\X$-coordinates are given as follows.
Consider triangle and edge parameters around an edge $e \in \Delta$ as in Figure \ref{fig:A_to_X_coords}~(left). 
Then the triple ratios $t_A$ and $t_B$ assigned to the two triangles adjacent to $e$ and the two quadruple ratios $q_{e^+}$ and $q_{e^-}$ along $e$ are given by

\begin{equation}
\label{eq:AtoXcoords}
t_{A} = \frac{a^{-}b^{-}e^{-}}{a^{+}b^{+}e^{+}} \enspace , \qquad t_{B} = \frac{c{^+}d^{+}e^{+}}{c^{-}d^{-}e^{-}}  \enspace ,
\qquad q_{e^+} = \frac{Ad^{-}}{Ba^{-}} \enspace , \qquad q_{e^-} = \frac{Bb^{+}} {Ac^{+}} \enspace ,	
\end{equation}
where the triple and quadruple ratios are as in Figure \ref{fig:A_to_X_coords} (right).

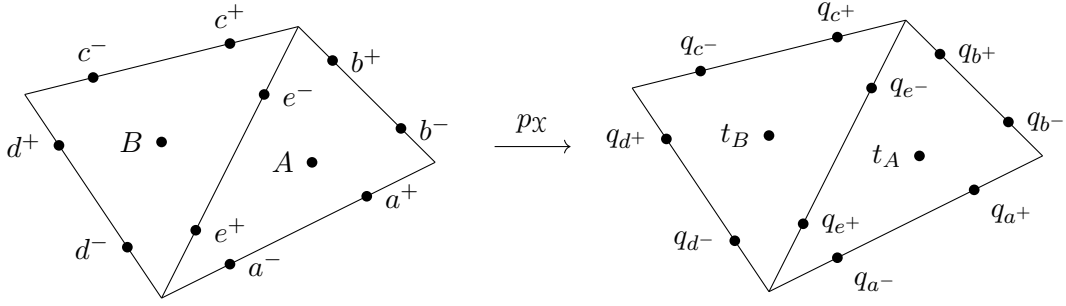
\begin{figure}
	\center
	\begin{tikzpicture}
	\node at (-4,0) {\begin{tikzpicture}[scale=0.9]
		\draw (4, 4) edge  (6, 2); 
		\draw (6, 2) edge (2,0); 
		\draw (2, 0) edge (4, 4); 
		\draw (4, 4) edge (0, 3); 
		\draw (0, 3) edge (2, 0); 
		\node [circle, fill = black, inner sep = 0pt, minimum size = 4pt, label = left : {$A$} ] (A) at (4.2, 2) {};	
		\node [circle, fill = black, inner sep = 0pt, minimum size = 4pt, label = left : {$B$} ] (B) at (2, 2.3) {};
		\node [circle, fill = black, inner sep = 0pt, minimum size = 4pt, label = right : {$e^+$} ] (ep) at (2.5, 1) {};
		\node [circle, fill = black, inner sep = 0pt, minimum size = 4pt, label = right : {$e^-$} ] (em) at (3.5, 3) {};
		\node [circle, fill = black, inner sep = 0pt, minimum size = 4pt, label = right : {$a^-$} ] (am) at (3, 0.5) {};
		\node [circle, fill = black, inner sep = 0pt, minimum size = 4pt, label = right : {$a^+$} ] (ap) at (5, 1.5) {};
		\node [circle, fill = black, inner sep = 0pt, minimum size = 4pt, label = right : {$b^+$} ] (bp) at (4.5, 3.5) {};
		\node [circle, fill = black, inner sep = 0pt, minimum size = 4pt, label = right : {$b^-$} ] (bm) at (5.5, 2.5) {};
		\node [circle, fill = black, inner sep = 0pt, minimum size = 4pt, label = above : {$c^+$} ] (cp) at (3, 3.75) {};
		\node [circle, fill = black, inner sep = 0pt, minimum size = 4pt, label = above : {$c^-$} ] (cm) at (1, 3.25) {};
		\node [circle, fill = black, inner sep = 0pt, minimum size = 4pt, label = left : {$d^+$} ] (dp) at (0.5, 2.25) {};
		\node [circle, fill = black, inner sep = 0pt, minimum size = 4pt, label = left : {$d^-$} ] (dm) at (1.5, 0.75) {};
		\end{tikzpicture}};
		\draw[->] (-0.5,0) -- (0.5,0);
		\node at (0,0.3) {$p_{\mathcal{X}}$};
	\node at (4,0) {\begin{tikzpicture}[scale=0.9]
	\draw (4, 4) edge  (6, 2); 
	\draw (6, 2) edge (2,0); 
	\draw (2, 0) edge (4, 4); 
	\draw (4, 4) edge (0, 3); 
	\draw (0, 3) edge (2, 0); 
	\node [circle, fill = black, inner sep = 0pt, minimum size = 4pt, label = left : {$t_A$} ] (A) at (4.2, 2) {};	
	\node [circle, fill = black, inner sep = 0pt, minimum size = 4pt, label = left : {$t_B$} ] (B) at (2, 2.3) {};
	\node [circle, fill = black, inner sep = 0pt, minimum size = 4pt, label = right : {$q_{e^+}$} ] (ep) at (2.5, 1) {};
	\node [circle, fill = black, inner sep = 0pt, minimum size = 4pt, label = right : {$q_{e^-}$} ] (em) at (3.5, 3) {};
	\node [circle, fill = black, inner sep = 0pt, minimum size = 4pt, label = below right : {$q_{a^-}$} ] (am) at (3, 0.5) {};
	\node [circle, fill = black, inner sep = 0pt, minimum size = 4pt, label = below right : {$q_{a^+}$} ] (ap) at (5, 1.5) {};
	\node [circle, fill = black, inner sep = 0pt, minimum size = 4pt, label = right : {$q_{b^+}$} ] (bp) at (4.5, 3.5) {};
	\node [circle, fill = black, inner sep = 0pt, minimum size = 4pt, label = right : {$q_{b^-}$} ] (bm) at (5.5, 2.5) {};
	\node [circle, fill = black, inner sep = 0pt, minimum size = 4pt, label = above : {$q_{c^+}$} ] (cp) at (3, 3.75) {};
	\node [circle, fill = black, inner sep = 0pt, minimum size = 4pt, label = above : {$q_{c^-}$} ] (cm) at (1, 3.25) {};
	\node [circle, fill = black, inner sep = 0pt, minimum size = 4pt, label = left : {$q_{d^+}$} ] (dp) at (0.5, 2.25) {};
	\node [circle, fill = black, inner sep = 0pt, minimum size = 4pt, label = left : {$q_{d^-}$} ] (dm) at (1.5, 0.75) {};		
		\end{tikzpicture}};
	
\end{tikzpicture}
\caption{Projecting $\A$-coordinates (left) to $\X$-coordinates (right) by forgetting the vector and covector decorations.} 
\label{fig:A_to_X_coords}
\end{figure}

The map $p_\X: \A \rightarrow \mathcal{X}$ defined by the equations \eqref{eq:AtoXcoords} is the projection that simply forgets about the decorations of vectors and covectors.
Indeed, it is not hard to verify that each of the terms in \eqref{eq:AtoXcoords} is invariant under redecorating both vectors and covectors.
Points in the image of $p_\X$ have fibers homeomorphic to $\R_{>0}^{2n}$, the space of admissible vector and covector decorations.

In \cite{CTT-moduli-2018} the authors describe how one can obtain a representation $\rho_x: \pi_1(S) \rightarrow \Gamma < \PGL_3(\Omega)$ associated to the convex projective structure $x\in \X_\Delta$ determined by an assignment of $\X$-coordinates. 
Here, we won't be concerned with the full description of this process, but rather give the fundamental building blocks to obtain an explicit description of $\rho_x$ in terms of the coordinates of $x$. 
Consider the triangulation $\widehat{\Delta}$ that is obtained from $\Delta$ by joining the vertices of each triangle to its barycenter.
The \emph{monodromy graph} $\mathcal{G}$ is defined as the dual spine of $\widehat{\Delta}$.
By construction $\mathcal{G}$ has three nodes for each triangle in $\Delta$ and the edges of $\mathcal{G}$ come in two types, dualizing either an edge of $\Delta$ or an edge of $\widehat{\Delta}\setminus\Delta$.
Consequently, each edge of the monodromy graph is associated to either a triangle or and edge of $\Delta$.
The developing map $\dev$ determined by $x$ 
lifts both $\Delta$ and $\mathcal{G}$ to the convex domain $\Omega$, and we denote these lifts by $\widetilde{\Delta}$ and $\widetilde{\mathcal{G}}$, repectively.

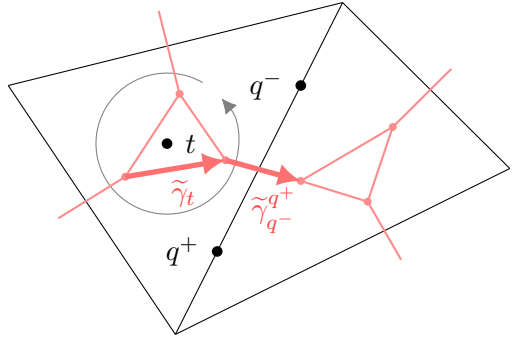
\begin{figure}
	\centering
	\begin{tikzpicture}[scale=1.1]
	\draw (4, 4) edge  (6, 2); 
	\draw (6, 2) edge (2,0); 
	\draw (2, 0) edge (4, 4); 
	\draw (4, 4) edge (0, 3); 
	\draw (0, 3) edge (2, 0); 
	\node [circle, fill = black, inner sep = 0pt, minimum size = 4pt, label = right : {$t$} ] (B) at (1.9, 2.3) {};
	\node [circle, fill = black, inner sep = 0pt, minimum size = 4pt, label = left : {$q^+$} ] (ep) at (2.5, 1) {};
	\node [circle, fill = black, inner sep = 0pt, minimum size = 4pt, label = left : {$q^-$} ] (em) at (3.5, 3) {};
	\draw [-{Latex[length=2mm, width=2mm]}, gray] ([shift = (60:0.85cm)]1.9,2.3) arc (60:400:0.85cm);
	\node [circle, fill = red1!60, inner sep = 0pt, minimum size = 3pt] at (2.05,2.9) {};
	\node [circle, fill = red1!60, inner sep = 0pt, minimum size = 3pt] at (1.4,1.9) {};
	\node [circle, fill = red1!60, inner sep = 0pt, minimum size = 3pt] at (2.6,2.1) {};
	\node [circle, fill = red1!60, inner sep = 0pt, minimum size = 3pt] at (3.5,1.85) {};
	\node [circle, fill = red1!60, inner sep = 0pt, minimum size = 3pt] at (4.3,1.6) {};
	\node [circle, fill = red1!60, inner sep = 0pt, minimum size = 3pt] at (4.6,2.5) {};
	
	\draw[color=red1!60, line width=0.8pt] (2.05,2.9) -- (1.4,1.9) -- (2.6,2.1) -- cycle;
	\draw[color=red1!60, line width=0.8pt] (2.05,2.9) -- (1.8,3.9);
	\draw[color=red1!60, line width=0.8pt] (1.4,1.9)  -- (0.6,1.4);
	\draw[color=red1!60, line width=0.8pt] (2.6,2.1)  -- (3.5,1.85);
	\draw[color=red1!60, line width=0.8pt] (3.5,1.85) -- (4.3,1.6) -- (4.6,2.5) -- cycle;
	\draw[color=red1!60, line width=0.8pt] (4.3,1.6) -- (4.7,0.9);
	\draw[color=red1!60, line width=0.8pt] (4.6,2.5) -- (5.3,3.2);
	\draw[color=red1!75, line width=2pt, -{Latex[length=3mm, width=3mm]}]  (1.4,1.9) -- (2.6,2.1);
	\draw[color=red1!75, line width=2pt, -{Latex[length=3mm, width=3mm]}]  (2.6,2.1)  -- (3.5,1.85);
	\node at (2.1,1.7) {\textcolor{red1}{$\widetilde{\gamma}_t$}};
	\node at (3.18,1.5) {\textcolor{red1}{$\widetilde{\gamma}^{q^+}_{q^-}$}};
	\end{tikzpicture}
	\caption{Local sketch of the developed monodromy graph $\widetilde{\mathcal{G}}$ (red).}
	\label{fig:monodromy}
\end{figure}
 
Let $\widetilde{\gamma}_t \in \underbar{E}(\widetilde{\mathcal{G}})$ be associated to a triangle of $\widetilde{\Delta}$ whose triple ratio is $t\in \R_{>0}$ and such that the orientation of $\widetilde{\gamma}_t$ coincides with the orientation of the triangle. We associate to $\widetilde{\gamma}_t$ the matrix
\begin{equation}
T(\widetilde{\gamma}_t)=T(t) = \frac{1}{\sqrt[3]{t}}\begin{pmatrix} 0 & 0 & 1 \\ 0 & -1 & -1 \\ t & t+1 & 1 \end{pmatrix} \enspace .
\end{equation}
The oppositely oriented edge $\widetilde{\gamma}_t^{-1}$ is associated to the inverse matrix $T(\widetilde{\gamma}_t^{-1})=T(t)^{-1}$.
Now let $\widetilde{\gamma}^{q^+}_{q^-} \in \underbar{E}(\widetilde{\mathcal{G}})$ be associated (dual) to an edge of $\Delta$ and oriented in such a way that the quadruple ratio $q^+$ (respectively $q^-$) appears on the right (left) of $\widetilde{\gamma}^{q^+}_{q^-}$. Then the matrix
\begin{equation}
E(\widetilde{\gamma}^{q^+}_{q^-})=E(q^+,q^-) = \sqrt[3]{\frac{q^+}{q^-}}\begin{pmatrix} 0 & 0 & q^- \\ 0 & -1 & 0 \\ \frac{1}{q^+} & 0 & 1 \end{pmatrix}
\end{equation}
is associated to $\widetilde{\gamma}^{q^+}_{q^-}$. 
 
Now let $\gamma \in \pi_1(S)$.
Furthermore, denote by $\underbar{E}(\widetilde{\mathcal{G}})$ the set of oriented edges of $\widetilde{\mathcal{G}}$.
Up to a homotopy equivalence we may think of $\gamma$ as being a composition of oriented edges in $\mathcal{G}$. 
Hence, we may consider a lift of $\gamma$ to a sequence of oriented edges $\widetilde{\gamma} = \widetilde{\gamma}_{t_1}^{\varepsilon_1}  \cdot \widetilde{\gamma}^{q_1^+}_{q_1^-} \cdots \widetilde{\gamma}_{t_m}^{\varepsilon_m}  \cdot \widetilde{\gamma}^{q_m^+}_{q_m^-} \subset \underbar{E}(\widetilde{\mathcal{G}})$ that alternates between edges associated to triangles and edges of $\Delta$. 
Here the value of $\varepsilon_i \in \{\pm 1\}$ depends on the relative orientation of the oriented edge $\widetilde{\gamma}_{t_i}$ to the triangle $t_i$ as explained above.
The key observation (\cite[Theorem 5.2.4]{CTT-moduli-2018}) is that the conjugacy class of the monodromy matrix $\rho_x(\gamma)$ is given by
\begin{equation}\label{eq:monodromy}
\left[ \rho_x(\gamma) \right] = \left[ T(t_1)^{\varepsilon_1} \cdot E(q_1^+,q_1^-)~ \cdots~ T(t_m)^{\varepsilon_m} \cdot E(q_m^+,q_m^-) \right] \enspace .
\end{equation}

In order to obtain the monodromy matrices in terms of a given set of $\A$-coordinates, one first uses the projection $p_\X$ defined by \eqref{eq:AtoXcoords} to extract the underlying $\X$-coordinates and then applies the above techniques to calculate the monodromy matrices as in \eqref{eq:monodromy}. 

Note that by definition $\A$-coordinates only define convex projective structures that have finite area. 
This is not true for $\X$-coordinates in general.
The conditions on $\X$-coordinates to define a finite area structure are as follows.
Let $i \in \left[ n \right]$ be a puncture of $S$ and let $(q_{ik})_{k=1}^{m_i}$ denote the cyclically ordered quadruple ratios assigned to the edges oriented away from $i$.
Similarly, denote by $(q_{ki})_{k=1}^{m_i}$ the quadruple ratios  
assigned to edges which are oriented towards $i$, and let $t_k$ be the triple ratio of the triangle that is oriented with $q_{ij}$. Figure \ref{fig:around_puncture} depicts the case $m_i=5$.
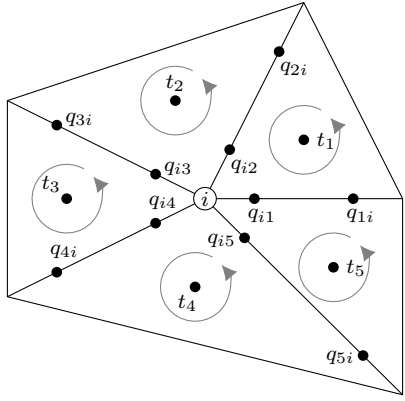
\begin{figure}
	\centering
	\begin{tikzpicture}[scale=1.3]
\draw (0,0) --  (2,0); 
\draw (0,0) --  (1,2);
\draw (0,0) --  (-2,1);  
\draw (0,0) --  (-2,-1); 
\draw (0,0) --  (2,-2); 
\draw (2,0) -- (1,2) -- (-2,1) -- (-2,-1) -- (2,-2) -- cycle;
\begin{scriptsize}
\node [circle, draw = black, fill = white, inner sep = 3pt, minimum size = 3pt, label = center: {$i$} ] at (0,0) {};
\node [circle, fill = black, inner sep = 0pt, minimum size = 4pt, label = { [shift ={(0.1,-0.5)}] $q_{i1}$} ] at (0.5,0) {};
\node [circle, fill = black, inner sep = 0pt, minimum size = 4pt, label = { [shift ={(0.1,-0.5)}] $q_{1i}$} ] at (1.5,0) {};
\node [circle, fill = black, inner sep = 0pt, minimum size = 4pt, label = { [shift ={(0.2,-0.5)}] $q_{i2}$} ] at (0.25, 0.5) {};
\node [circle, fill = black, inner sep = 0pt, minimum size = 4pt, label = { [shift ={(0.2,-0.5)}] $q_{2i}$} ] at (0.75, 1.5) {};
\node [circle, fill = black, inner sep = 0pt, minimum size = 4pt, label = { [shift ={(0.3,-0.2)}] $q_{i3}$} ] at (-0.5, 0.25) {};
\node [circle, fill = black, inner sep = 0pt, minimum size = 4pt, label = { [shift ={(0.3,-0.2)}] $q_{3i}$} ] at (-1.5, 0.75) {};
\node [circle, fill = black, inner sep = 0pt, minimum size = 4pt, label = { [shift ={(0.1,-0.0)}] $q_{i4}$} ] at (-0.5, -0.25) {};
\node [circle, fill = black, inner sep = 0pt, minimum size = 4pt, label = { [shift ={(0.1,-0.0)}] $q_{4i}$} ] at (-1.5, -0.75) {};
\node [circle, fill = black, inner sep = 0pt, minimum size = 4pt, label = { [shift ={(-0.3,-0.3)}] $q_{i5}$} ] at (0.4, -0.4) {};
\node [circle, fill = black, inner sep = 0pt, minimum size = 4pt, label = { [shift ={(-0.3,-0.3)}] $q_{5i}$} ] at (1.6, -1.6) {};
\node [circle, fill = black, inner sep = 0pt, minimum size = 4pt, label = { [shift ={(0.3,-0.3)}] $t_{1}$} ] at (1, 0.6) {};
\draw [-{Latex[length=2mm, width=2mm]}, gray] ([shift = (60:0.35cm)]1, 0.6) arc (60:400:0.35cm);
\node [circle, fill = black, inner sep = 0pt, minimum size = 4pt, label = { [shift ={(0,-0.05)}] $t_{2}$} ] at (-0.3, 1) {};
\draw [-{Latex[length=2mm, width=2mm]}, gray] ([shift = (60:0.35cm)]-0.3, 1) arc (60:400:0.35cm);
\node [circle, fill = black, inner sep = 0pt, minimum size = 4pt, label = { [shift ={(-0.2,-0.1)}] $t_{3}$} ] at (-1.4, 0) {};
\draw [-{Latex[length=2mm, width=2mm]}, gray] ([shift = (60:0.35cm)]-1.4, 0) arc (60:400:0.35cm);
\node [circle, fill = black, inner sep = 0pt, minimum size = 4pt, label = { [shift ={(-0.1,-0.5)}] $t_{4}$} ] at (-0.1, -.9) {};
\draw [-{Latex[length=2mm, width=2mm]}, gray] ([shift = (60:0.35cm)]-0.1,-0.9) arc (60:400:0.35cm);
\node [circle, fill = black, inner sep = 0pt, minimum size = 4pt, label = { [shift ={(0.3,-0.3)}] $t_{5}$} ] at (1.3, -0.7) {};
\draw [-{Latex[length=2mm, width=2mm]}, gray] ([shift = (60:0.35cm)]1.3,-0.7) arc (60:400:0.35cm);
\end{scriptsize}
	\end{tikzpicture}
	\caption{Local picture of $\X$-coordinates around a puncture $i$ on a triangulation with $m_i=5$ triangles adjacent to $i$.}
	\label{fig:around_puncture}
\end{figure}
With this notation at hand, the finite area $\X$-coordinates are exactly those that satisfy
\begin{equation} \label{eq:X_finite_area}
	 \prod \limits_{k=1}^{m_i} q_{ik} = 1 \qquad \text{and} \qquad  \prod \limits_{k=1}^{m_i} q_{ki} t_k = 1  \enspace ,
\end{equation}
for all punctures $i \in \left[n \right]$.
To see this one computes the monodromy matrices $\rho_x(\gamma)$ as in \eqref{eq:monodromy} for peripheral elements $\gamma \in \pi_1(S)$ and recalls that these have to be conjugate to the standard parabolic \eqref{eq:standard_parabolic} for finite area structures (see \cite{CTT-moduli-2018}).
Again, using the definition of $p_\X$ it is straightforward to check that \eqref{eq:X_finite_area} is automatically satisfied in the image of $p_\X$.

As a result, if we denote by $\X_\Delta^f$ the space of $\X$-coordinates on $\Delta$ that define a finite area structure, we see that $\A_\Delta$ is a trivial $(\R^{2n}_{>0})$-fiber bundle over $\X_\Delta^f$ with $p_\X$ as canonical projection. 


\subsection{The space of singly decorated structures}

We conclude this section by describing the effect of scaling vectors or covectors individually. We use the notation of the previous section and refer again to Figure~\ref{fig:around_puncture}. Scaling the vector at vertex $v_i$ by $\lambda>0$ multiplies the edge parameters $q_{ki}$ by $\lambda$ and the triangle parameters of all triangles incident with $v_i$ by $\lambda.$ Here, the triangle parameters are counted with multiplicity, so if $m$ corners of the triangle with parameter $t$ are at $v_i,$ then the parameter is multiplied by $\lambda^m.$ In particular, scaling \emph{all} vector decorations by $\lambda$, we may arrange that all triangle parameters sum to one.

Scaling the covector at $v_i$ by $\lambda$ scales the edge parameters $q_{ik}$ by $\lambda$ and does not affect the triangle parameters. In particular, one may use the covector scaling to arrange that all parameters near $v_1$ sum to one. The space of \emph{(independent) decorated marked structures} can therefore be identified with the subspace 
\[
\A_\Delta^\dag = \{ \alpha \in \A_\Delta \mid \sum_{j=1}^{|T|} t_j=1, \ \sum_{k=1}^{m_i} q_{ik} = 1 \text{ for all } i\in [n]\} 
\cong (\bigtimes_{i\in [n]} \Delta^{m_i -1} ) \times \Delta^{T-1},
\]
i.e.\thinspace it is a product of open simplices, one for each vertex and of dimension one less than its degree, times a simplex of dimension one less than the number of triangles.

\section{Outitude}
\label{sec:outitude}


We briefly recall Cooper and Long's \cite{CoLo-Epstein-Penner-2015} generalisation of Epstein and Penner's \cite{EpPe-Euclidean-1988} convex hull construction in \S\ref{subsec:convex_hull}. The convex hull construction can be achieved via an edge flipping algorithm, and we show in \S\ref{subsec:Outitude is intrinsic} that the decision of whether or not to flip an edge can be made based on an \emph{outitude} computed from the $\A$--coordinates. This is used in \S\ref{subsec:putative cells} to describe the putative cells in the decomposition of moduli space as semi-algebraic sets. The algorithm to compute the canonical cell decomposition of a doubly decorated projective surface is given in \S\ref{subsec:flip algo}. 


\subsection{Cooper and Long's convex hull construction} 
\label{subsec:convex_hull}

Let $(\Omega, \Gamma, \phi) \in \Ttf(S)$ be a properly convex projective structure of finite area, and let $\vec_dec$ be a vector decoration of this structure.
A key observation of Cooper and Long is that the vector decoration $\vec_dec$ is a discrete subset of $\mathcal{L}^+ \cup \{0\}$.
Let $\mathcal{C}$ be the convex hull of $\vec_dec$. 
Although $\mathcal{C}$ has infinitely many vertices, its facets are finite polygons.
The projection of the proper faces of $\mathcal{C}$ to $\Omega$ is a $\Gamma$-invariant cell decomposition of $\Omega$. 
This descends to a cell decomposition of $\Omega / \Gamma$ because of $\Gamma$-invariance. 
Via the marking $\phi: S \mapsto \sfrac{\Omega}{\Gamma}$ this induces an ideal cell decomposition of $S$. 

With this at hand, each doubly-decorated convex projective structure $x = ( \Omega , \Gamma , \phi, (\vec_dec , \covec_dec) ) \in \decTtf(S)$ induces an ideal cell decomposition of $S$ via the
convex hull construction. 
We denote this ideal cell decomposition by $\Delta(x)$ and refer to it as its \emph{canonical cell decomposition}.
Note that the convex hull construction does not depend on a covector decoration $\covec_dec$. 
For each $x \in \decTtf(S)$ this yields a trivial $(n+1)$--parameter family of doubly-decorated structures that induce the same ideal decomposition of $S$. Here, $n$ dimensions come from varying the orbits of the covectors independently, and one dimension comes from varying all vector decorations by the same scalar.

Analogous to Penner \cite{Penner-Decorated-1987}, define for any ideal cell decomposition $\Delta \subset S$ the sets
\begin{align}
\Cell( \Delta ) &= \{ x \in \decTtf(S) \mid  \Delta(x) = \Delta \} \enspace , \label{eq:def_open_cell}\\
\clCell( \Delta ) &= \{ x \in \decTtf(S) \mid  \Delta(x) \subseteq  \Delta \} \enspace .\label{eq:def_closed_cell}
\end{align} 

Our ultimate goal is to show that the decomposition of $\decTtf(S)$ into these sets, that is
\begin{equation} \label{eq:cell_dec}
	\{ \Cell(\Delta) ~|~ \Delta \text{ ideal cell decomposition of } S \} \enspace , 
\end{equation}
is a $\MCG(S)$-invariant cell decomposition of $\decTtf(S)$, and that $\Cell( \Delta )\neq \emptyset$ for each ideal cell decomposition $\Delta \subset S$.

Recall that the mapping class group $\MCG(S)$ acts on both the surface $S$ and on $\decTtf(S)$.
The convex hull construction is natural for these actions in the sense that for $\mu \in \MCG(S)$ and $x \in \decTtf(S)$ we have $\Delta( \mu \cdot x ) = \mu \cdot \Delta(x)$ by definition.
It follows that 
\[
\mu \cdot \Cell(\Delta) = \Cell(\mu \cdot \Delta)
\]
showing that the decomposition defined in \eqref{eq:cell_dec} is $\MCG(S)$-invariant, indeed.

What is left is to show that each $\Cell(\Delta)$ is actually a cell. This is what we will be concerned with in \S\ref{subsec:main_results}. The remainder of this section is concerned with developing the main tool, \emph{outitude}, for this task, and highlighting some of its properties.


\subsection{Outitude is intrinsic}
\label{subsec:Outitude is intrinsic}

Suppose $( R, C )$ and $( R', C' )$ are two concrete decorated triangles sharing a side $e$ in a concrete decorated triangulation. Denote the matrices $C, C', R$ and $R'$ as follows.
\[
\begin{array}{lccp{.5cm}lcc}
C &=& ( \vtx_0 \mid \vtx_1 \mid \vtx_2 )  & & R  & = & ( \covtx_0 \mid \mid \covtx_1 \mid \mid \covtx_2 ) \\
C' &=& ( \vtx_0 \mid \vtx_2 \mid \vtx_3 ) & & R' &=& ( \covtx_0 \mid \mid \covtx_2 \mid \mid \covtx_3 )
\end{array}
\]
Consider the (possibly degenerate) tetrahedron $T=\conv(\vtx_0,\vtx_1,\vtx_2,\vtx_3)$. 
The matrices $C, C', D = ( \vtx_1 \mid \vtx_2 \mid \vtx_3 )$ and $D' = ( C_0  \mid C_1 \mid C_3)$ represent the four facets of $T$ in the sense that their column vectors are the vertices of the corresponding facets. 
Let $e^+ := \covtx_0 \cdot \vtx_2$ and $e^- := \covtx_2 \cdot \vtx_0$ denote the edge parameters assigned to the two oriented edges on $e$. The value
\begin{equation} \label{eq:outitude_def1}
	\Out ( e ) := e^+ e^- \left( \det(D) + \det(D') - \det(C) - \det(C') \right)
\end{equation}
is called the \emph{outitude} along the edge $e$.

\begin{lem}
Let $(R, C)$ and $(R', C')$ be two concrete decorated triangles that share a side as above.
The tetrahedron $T$ they determine does not contain the origin.

\end{lem}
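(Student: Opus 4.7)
The plan is to use one of the covectors as a separating linear functional. The key inputs are already in place: by definition of a concrete flag we have $\covtx_0 \cdot \vtx_0 = 0$, and by Lemma~\ref{lem:one_side_of_kernel} (which applies since the two triangles sit inside a concrete decorated triangulation $\Lambda$) we have $\covtx_0 \cdot \vtx_i > 0$ for $i = 1, 2, 3$, because $\vtx_1, \vtx_2, \vtx_3$ are concrete decorated vertices of $\Lambda$ distinct from $\vtx_0$.

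First I would observe that these inequalities mean $\covtx_0$ is non-negative on every vertex of $T$, hence non-negative on $T = \conv(\vtx_0,\vtx_1,\vtx_2,\vtx_3)$, and vanishes there only on points of the form $a_0\vtx_0$ with $a_0 \ge 0$ (since $\covtx_0$ is strictly positive on $\vtx_1,\vtx_2,\vtx_3$ and the other barycentric coordinates must therefore vanish).

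Next, suppose for contradiction that $0 \in T$, so $0 = \sum_{i=0}^{3} a_i \vtx_i$ with $a_i \ge 0$ and $\sum a_i = 1$. Applying $\covtx_0$ gives
\[
0 \;=\; a_1(\covtx_0\cdot\vtx_1) + a_2(\covtx_0\cdot\vtx_2) + a_3(\covtx_0\cdot\vtx_3),
\]
and since each coefficient $\covtx_0\cdot\vtx_i$ is strictly positive, this forces $a_1=a_2=a_3=0$ and hence $a_0=1$. But then $\vtx_0=0$, which contradicts the fact that $\vtx_0$ is a column of $C\in\GL_3(\R)$.

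There is no real obstacle here; the argument is purely a positivity/separation computation once Lemma~\ref{lem:one_side_of_kernel} is available. The only subtlety to flag is that one genuinely needs the global statement of that lemma (not merely the fact that $\vtx_1,\vtx_2,\vtx_3$ lie in triangles containing $(\covtx_0,\vtx_0)$), since $\vtx_3$ is not a vertex of either triangle that $\covtx_0$ belongs to — this is exactly the configuration the lemma was designed to handle. Symmetrically, one could have used any of $\covtx_1$, $\covtx_2$, or $\covtx_3$ to run the same argument.
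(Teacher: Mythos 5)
Your argument is correct and uses the same underlying idea as the paper's proof (separating $T$ from the origin by the linear functional $\covtx_0$, which is nonnegative on the vertices of $T$ and strictly positive on three of them). However, your remark that ``one genuinely needs the global statement of that lemma\ldots\ since $\vtx_3$ is not a vertex of either triangle that $\covtx_0$ belongs to'' is based on a misreading of the setup: $\covtx_0$ \emph{is} a covertex of $(R', C')$, since $R' = (\covtx_0 \mid\mid \covtx_2 \mid\mid \covtx_3)$, and $\vtx_3$ is a vertex of that same triangle, since $C' = (\vtx_0 \mid \vtx_2 \mid \vtx_3)$. Hence $\covtx_0 \cdot \vtx_3 > 0$ follows directly from the definition of a concrete decorated triangle applied to $(R', C')$, with no appeal to Lemma~\ref{lem:one_side_of_kernel} (and no ambient triangulation) required. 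This is exactly what the paper's proof does. Your invocation of Lemma~\ref{lem:one_side_of_kernel} is not incorrect, but it is a heavier tool than needed, and citing it as \emph{necessary} is wrong; the lemma genuinely earns its keep only when two decorated vertices lie in triangles that are not adjacent.
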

\begin{proof}
Consider the closed half space $H_0^+ = \{ X \in \R^3 \mid \covtx_0 \cdot X \geq 0 \}$. By definition of concrete decorated triangles we have
\[
\covtx_0 \cdot \vtx_1 > 0 \enspace , \quad \covtx_0 \cdot \vtx_2 > 0 \enspace , \quad \covtx_0 \cdot \vtx_3 > 0 \quad \text{and}  \quad \covtx_0 \cdot \vtx_0 = 0 \enspace .
\]
Thus the tetrahedron $T=\conv(\vtx_0, \vtx_1, \vtx_2, \vtx_3)$ is contained in $H_0^+$ and only its vertex $\vtx_0$ is contained in the boundary $H_0=\bd H_0^+$. Since $\vtx_0 \neq 0$ it follows that the origin is not contained in the tetrahedron.
\end{proof}

Let $K \subset \R^3$ be a closed convex subset not containing the origin. The \emph{bottom} of $p \in K$ is the point $\lambda \cdot p$ where
\[
\lambda := \min \{ \lambda \in \R \mid \lambda \cdot p \in K \} \enspace .
\]
We say that $p$ is \emph{outer} if it is its own bottom. The \emph{outer hull} of $K$ is its set of outer points.

\begin{lem}\label{lem:convex}
Let $C, C', D, D'$ and $T$ be as above and denote by $e'$ the side shared by $D$ and $D'$. Then exactly one of the following statements holds.
\begin{enumerate}[label=(\roman*)]
	\item The outer hull of $T$ is $C \cup_e C'$ and $\Out(e) > 0$.
	\item The outer hull of $T$ is $D \cup_e' D'$ and $\Out(e) < 0$. 
	\item The outer hull of $T$ is $C \cup_e C' = D \cup_{e'} D'$ and $\Out(e) = 0$. In this case $T$ is degenerate.
\end{enumerate} 
\end{lem}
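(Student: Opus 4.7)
The plan is to recognise that $\Out(e)/(e^+ e^-)$ equals six times the signed volume of the ordered tetrahedron $T$ with vertex sequence $(C_0, C_1, C_2, C_3)$, and then to read off which pair of facets forms the outer hull directly from the sign of that signed volume. Set $V_T := \det(C_1 - C_0,\, C_2 - C_0,\, C_3 - C_0)$; expanding multilinearly in each column shows $V_T = \det(D) + \det(D') - \det(C) - \det(C')$, so $\Out(e) = e^+ e^- \cdot V_T$. Since $e^+ = r_0 \cdot C_2 > 0$ and $e^- = r_2 \cdot C_0 > 0$, the sign of $\Out(e)$ coincides with that of $V_T$, and $V_T = 0$ is exactly the condition that $C_0, C_1, C_2, C_3$ be affinely dependent, i.e.\ $T$ is degenerate.

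Next, a facet $F$ of $T$ belongs to the outer hull if and only if the origin and the vertex of $T$ opposite $F$ lie on opposite sides of the affine plane through $F$. For the facet $C = (C_0, C_1, C_2)$ with opposite vertex $C_3$, writing the plane as $\{X : n_C \cdot (X - C_0) = 0\}$ with $n_C = (C_1 - C_0) \times (C_2 - C_0)$, a short direct computation yields $n_C \cdot (0 - C_0) = -\det(C)$ and $n_C \cdot (C_3 - C_0) = V_T$, so $C$ is visible from the origin iff $\det(C) \cdot V_T > 0$. The analogous calculation for the other three facets goes through identically. By Lemma~\ref{lem:edge_flip}, $\det(D)$ and $\det(D')$ are also strictly positive in addition to the given $\det(C), \det(C') > 0$, so the four visibility conditions collapse to: $C$ and $C'$ are each visible iff $V_T > 0$, while $D$ and $D'$ are each visible iff $V_T < 0$.

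In the non-degenerate case, the outer hull is the union of the visible facets: $V_T > 0$ yields outer hull $C \cup_e C'$ and $\Out(e) > 0$, which is case~(i), and $V_T < 0$ yields $D \cup_{e'} D'$ with $\Out(e) < 0$, which is case~(ii). In the degenerate case $V_T = 0$, the four points are coplanar on a plane $\Pi'$ avoiding the origin, and the four positive face determinants force them into convex position on $\Pi'$ in cyclic order $C_0, C_1, C_2, C_3$ (this can be extracted from the affine dependence $C_3 = \alpha C_0 + \beta C_1 + \gamma C_2$ used in the proof of Lemma~\ref{lem:edge_flip}, where $\alpha, \gamma > 0$ and $\beta < 0$). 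Since every $p \in T$ lies in $\Pi'$, the ray from the origin through $p$ meets $T$ only at $p$, so $p$ is its own bottom; thus the outer hull equals $T$ itself, which coincides with both $C \cup_e C'$ and $D \cup_{e'} D'$, giving case~(iii). The main obstacle is carrying out the sign bookkeeping for visibility across all four facets consistently; once the identity $\Out(e) = e^+ e^- V_T$ is in hand, the remainder of the argument is mechanical.
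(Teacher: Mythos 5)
Your proof is correct and takes a genuinely different (and arguably more explicit) route than the paper's.

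Both arguments of course rest on Lemma~\ref{lem:edge_flip} for $\det(D),\det(D')>0$. The paper's proof proceeds globally: it first disposes of the degenerate case, then shows that every ray from the origin through $T$ meets $C\cup_e C'$ exactly once and $D\cup_{e'}D'$ exactly once, and concludes by a convexity argument that one of the two unions is the outer hull; it then identifies which one by asserting (without detailed justification) that $\tfrac{1}{e^+e^-}\Out(e)$ equals the Euclidean volume of $T$ precisely when $C\cup_e C'$ is outer. Your proof replaces that last step with the precise signed-volume identity
\[
\det(D)+\det(D')-\det(C)-\det(C') \;=\; \det(C_1-C_0,\,C_2-C_0,\,C_3-C_0) \;=:\; V_T,
\]
and then argues facet-by-facet with the visibility criterion: a facet is outer iff its supporting plane separates the origin from the opposite vertex. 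Computing $n_C\cdot(0-C_0)=-\det(C)$ and $n_C\cdot(C_3-C_0)=V_T$ gives the sign condition, and the four determinants being positive make the conclusion immediate. One small caution: for the facets $D$ and $D'$ the analogous calculation does not go through ``identically'' --- the signed volume based at $C_1$ (resp.\ $C_0$) comes out to $-V_T$, not $V_T$, which is exactly what flips the visibility condition to $V_T<0$ for those two facets. You state the right conclusion, so this is only an issue of phrasing. Your approach fills in the piece the paper glosses over (the identification of $\tfrac{1}{e^+e^-}\Out(e)$ with a signed volume), at the cost of a longer computation; the paper's ray-counting argument is more geometric and makes the ``exactly one of the two unions is outer'' dichotomy visible without case-by-case visibility bookkeeping.
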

\begin{proof}
Let us first prove the claim on the outer hull of $T$. If $T$ is degenerate then it is contained in a plane which does not pass through the origin and the outer hull of $T$ is clearly $C \cup_e C' = D \cup_{e'} D'$. 

Now assume that this is not the case. The facets of $T$ are those determined by $C, C', D$ and $D'$ so clearly the outer hull of $T$ is contained in the union of these sets.
We have $\det(C)>0$ and $\det(C')>0$ by definition. 
Furthermore, Lemma \ref{lem:edge_flip} ensures that $\det(D)>0$ and $\det(D')>0$ as well. 
We will show that rays through $T$ intersect $C \cup_e C'$ exactly once and $D \cup_{e'} D'$ exactly once because of the positive determinant condition.

To see this, let $\ell \in \pos \{\vtx_0,\vtx_1,\vtx_2,\vtx_3\}$ be a ray from the origin which passes through $T$. 
Since $\det(C)>0$ and $\det(C')>0$ we know that the vertices $\vtx_1$ and $\vtx_3$ lie on different sides of the hyperplane $H_{02} = \{ X \in \R^3 \mid \det(\vtx_0 \mid \vtx_2 \mid X) = 0 \}$.
The ray $\ell$ either is contained in the hyperplane $H_{02}$ or lies on the same side as either $\vtx_1$ or $\vtx_3$.
Thus $\ell$ intersects $C \cup_e C'$ either in their common edge $e$ or it only intersects one of the triangles $C$ or $C'$.
In any case, there is it most one such intersection by convexity. 
Similarly, positivity $\det(D)$ and  $\det(D')$ imply that the ray $\ell$ meets $D \cup_{e'} D'$ exactly once.
 
Now suppose there was one ray which passes through $C \cup_e C'$ strictly before it passes through $D \cup_{e'} D'$ and another ray which passes through $D \cup_{e'} D'$ strictly before it passes through $C \cup_e C'$. These rays pass through the interior of $T$. By the convexity of $T$ there is a ray from the origin passing through the interior of $T$ which intersects $C \cup_e C'$ and $D \cup_{e'} D'$ at the same time. This contradicts the assumption that $T$ is nondegenerate. It follows that either $C \cup_e C'$ or $D \cup_{e'} D'$ is the outer hull.

Finally, let us prove that the sign of the outitude along $e$ is as is as claimed. Since the value $e^+ e^-$ is strictly positive by definition, we may instead study the sign of $\frac{1}{e^+ e^-} \Out(e)$ under the relevant conditions.
Now the nice feature of the value $$\frac{1}{e^+ e^-} \Out(e) =  \left( \det(D) + \det(D') - \det(C) - \det(C') \right)$$ is that it equals the Euclidean volume of the tetrahedron $T$ if and only if $C \cup_e C^\prime$ is the outer hull. Clearly, in this case the outitute along $e$ is nonnegative and vanishes if and only if $T$ is degenerate. 
Similarly, the Euclidean volume of $T$ is given by $\frac{-1}{e^+ e^-} \Out(e)$ if and only if $D \cup D^\prime$ is the outer hull, finishing the proof. 
\end{proof}

Let $x \in \decTtf(S)$  be a doubly-decorated real convex projective structure on $S$ and $\Delta \subset S$ an ideal triangulation.
As discussed at the end of Section \ref{sec:decorated_higher_teichmueller_space}, $x$ induces a lift of $\Delta$ to a concrete decorated triangulation $\widetilde{\Delta}$, unique up to decorated isomorphism.
This enables us to define the \emph{outitude} along an edge $e \in \Delta$, denoted by $\Out_x(e)$, as the outitude of one of its lifts $\widetilde{e} \in \widetilde{\Delta}$. Note that the latter is well-defined by Lemma~\ref{lem:abstract_triangle_equivalence}.

\begin{lem} \label{lem:outitude_intrinsic}
	Having fixed a doubly-decorated real projective structure on $S$, the sign of the outitude along a given edge is an intrinsic property of this structure.
\end{lem}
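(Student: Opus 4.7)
The plan is to show that the outitude value itself, not merely its sign, is well-defined, by tracking how it transforms under the two sources of ambiguity in its construction.

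First, I would fix the doubly-decorated structure $x = (\Omega, \Gamma, \phi, (\covec_dec, \vec_dec))$ and unpack the two ambiguities in defining $\Out_x(e)$: (i) the global lift of $\Delta$ to a concrete decorated triangulation $\widetilde{\Delta} \subset \R^3$ is only unique up to the action of $\SL_3(\R)$ by the construction preceding Theorem~\ref{thm:A-coords}; (ii) within a fixed $\widetilde{\Delta}$, any two lifts $\widetilde{e}, \widetilde{e}'$ of the edge $e \in \Delta$ differ by the action of an element of the holonomy group $\Gamma$, which is a subgroup of $\SL_3(\R)$. Both ambiguities are therefore subsumed by the $\SL_3(\R)$-action $M \cdot (R, C) = (RM^{-1}, MC)$ on concrete decorated triangles.

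The key step is to observe that every ingredient of the outitude formula
\[
\Out(e) = e^+ e^- \bigl( \det(D) + \det(D') - \det(C) - \det(C') \bigr)
\]
is invariant under this $\SL_3(\R)$-action. For the triangle parameters this is simply $\det(MC) = \det(M)\det(C) = \det(C)$ since $\det M = 1$, applied to each of $C, C', D, D'$. For the edge parameters $e^+ = \covtx_0 \cdot \vtx_2$ and $e^- = \covtx_2 \cdot \vtx_0$, invariance follows from $(\covtx \cdot M^{-1})(M \cdot V) = \covtx \cdot V$. Equivalently, both the triangle determinants and the edge parameters are already encoded in the data $(\det(C), R \cdot C)$ that Lemma~\ref{lem:abstract_triangle_equivalence} identifies as the complete $\SL_3(\R)$-invariant of a concrete decorated triangle.

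Combining these points, any two computations of $\Out_x(e)$ arising from different choices of $\widetilde{\Delta}$ or of a lift $\widetilde{e}$ of $e$ produce the same real number, and in particular the same sign. I do not expect a genuine obstacle here: the statement is essentially a bookkeeping verification that the formula is well-defined, using only the preservation of triangle parameters and edge parameters under the $\SL_3(\R)$-action established in Lemma~\ref{lem:abstract_triangle_equivalence}. The only mild care needed is to note that the flipped triangles $D, D'$ introduced by Lemma~\ref{lem:edge_flip} are also concrete decorated triangles depending only on $(R, C)$ and $(R', C')$, so their determinants transform in the same way under $\SL_3(\R)$.
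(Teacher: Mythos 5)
Your proof is correct and follows essentially the same route as the paper: both proofs reduce the two ambiguities (choice of lift $\widetilde e$, differing by an element of $\Gamma$, and choice of concrete decorated triangulation $\widetilde\Delta$, differing by a decorated isomorphism) to the action of $\SL_3(\R)$, and then observe that this action preserves both the determinants (equivalently, the tetrahedron volume the paper cites) and the edge parameters appearing in the outitude formula. Your version spells out the invariance computation $\det(MC)=\det(C)$ and $(\covtx M^{-1})(MV)=\covtx V$ explicitly and notes the stronger conclusion that the value of $\Out_x(e)$, not just its sign, is well-defined, but the underlying argument is the same.
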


\begin{proof}
	Fix a doubly-decorated real projective structure on $S$ and fix an (un-oriented) edge $e$ of $S$. Let $\Delta$ be a triangulation of $S$ that contains the edge $e$. 
	Again, lift $\Delta$ to a concrete decorated triangulation in $\R^3$. The outitude $\Out(e)$ defined in \eqref{eq:outitude_def1} is a function of the volume of a tetrahedron whose vertices are those of the ideal triangles containing a chosen lift of $e$. 
	The edge parameters assigned to that lifted edge are determined by the concrete decorated triangulation. 
	However any two such lifts differ by the action of an element of the holonomy group, which is a subgroup of $\SL_3(\R)$ and hence preserves volume and edge parameters. Therefore the outitude condition is independent of the choice of lift. Similarly, any two concrete decorated triangulations realising the doubly-decorated real projective structure on $S$ differ by the action of $\SL_3(\R)$ so $\Out(e)$ is independent of the choice of such a realisation.
\end{proof}

We say that a triangulation $\Delta$ of $S$ is \emph{canonical} for $x$ if the outitude along every edge in $e\in \Delta$ is nonnegative.

\begin{cor} \label{cor:canonical}
Let $x \in \decTtf(S)$ and let $\Delta$ be a triangulation of $S$. The following are equivalent.
\begin{enumerate}[label=(\roman*)]
	\item $\Delta$ is canonical for $x$.
	\item The cell decomposition of $S$ that is formed by those edges of $\Delta$ along which the outitude is strictly positive is the ideal cell decomposition determined by Cooper and Long's \cite{CoLo-Epstein-Penner-2015} convex hull construction for $x$.
\end{enumerate}
\end{cor}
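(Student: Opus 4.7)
The approach is to compare two piecewise linear surfaces over $\Omega$: the hull $\Sigma := \hull(\widetilde{\Delta})$ of the concrete decorated triangulation lifted from $\Delta$, and the outer hull of $\mathcal{C} := \conv(\vec_dec)$ that Cooper and Long project down to produce $\Delta(x)$. By Corollary~\ref{lem:positive_parameters} both surfaces project homeomorphically onto $\Omega$, so the functions $f,g : \Omega \to \R_{>0}$ defined by $f(q)q \in \Sigma$ and $g(q)q$ on the outer hull are well defined, and $f \geq g$ since $\Sigma \subset \mathcal{C}$. A short argument using strict convexity of $\Omega$ shows that every $V \in \vec_dec$ is a vertex of the outer hull (the ray through $V$ meets $\mathcal{C}$ only at $V$), so $1/f(p_i) = 1/z_i = 1/g(p_i)$ at each projected vertex $p_i := V_i/z_i$.

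The key geometric observation is that $1/f$ is affine on each projected triangle of $\widetilde{\Delta}$, equal to the barycentric interpolant of $1/z_i$ at $p_i$: writing a point on a triangle as $\sum \alpha_i V_i$ and projecting, one obtains $q = \sum \beta_i p_i$ with $\beta_i = \alpha_i z_i / t$ and $1/t = \sum \beta_i/z_i$. Unwinding Lemma~\ref{lem:convex} for the tetrahedron $T$ spanned by two adjacent triangles $T_0, T_1$ across an edge $e$, the statement ``the outer hull of $T$ equals $T_0 \cup T_1$'' translates to ``$1/f$ has a concave break across $\pi(e)$''; strict inequality corresponds to $\Out(e) > 0$, while $\Out(e) = 0$ corresponds to $1/f$ being affine across $\pi(e)$ and $T$ degenerate. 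Hence $\Out(e) \geq 0$ for every $e \in \Delta$ is equivalent to $1/f$ being a globally concave PL function on $\Omega$.

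For (ii)$\Rightarrow$(i), if $\{e : \Out(e) > 0\}$ is the $1$-skeleton of $\Delta(x)$ then $\Delta$ refines $\Delta(x)$, so any additional edge of $\Delta$ lies in the interior of a face of $\Delta(x)$; its two adjacent triangles sit in the same flat outer-hull face, are therefore coplanar, and Lemma~\ref{lem:convex}(iii) gives $\Out = 0$. For (i)$\Rightarrow$(ii), assume $\Out \geq 0$ throughout, so $1/f$ is globally concave PL. For $q \in \Omega$ inside a face $F$ of $\Delta(x)$ with vertices $p_{i_1}, \ldots, p_{i_k}$ and barycentric coordinates $\alpha_j$, the outer-hull face lying over $F$ is flat, so $1/g$ is affine on $F$ with $1/g(q) = \sum_j \alpha_j/z_{i_j}$. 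Jensen's inequality for the concave $1/f$ gives $1/f(q) \geq \sum_j \alpha_j \cdot 1/f(p_{i_j}) = \sum_j \alpha_j/z_{i_j} = 1/g(q)$; combined with $1/f \leq 1/g$ this forces $1/f \equiv 1/g$, so $\Sigma$ coincides with the outer hull of $\mathcal{C}$. Each triangle of $\widetilde{\Delta}$ then lies in a single outer-hull face; every edge of $\Delta(x)$ appears in $\Delta$ because $\widetilde{\Delta}$ triangulates each face; and an edge of $\Delta$ has $\Out = 0$ if it is interior to such a face and $\Out > 0$ if it separates two distinct faces. Thus $\{e : \Out(e) > 0\}$ is precisely the $1$-skeleton of $\Delta(x)$.

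The main obstacle is correctly pinning down the sign convention in the equivalence ``$\Out(e) \geq 0$ iff $1/f$ is concave across $\pi(e)$'': matching Lemma~\ref{lem:convex}'s geometric conclusion with the right direction under the radial reparameterization $p \mapsto 1/z$ takes some care, in particular noticing that the ``outer'' (origin-side) configuration produces a \emph{peak} in $1/f$, not a valley. Once this dictionary is in place, the Jensen-inequality step handles the hard direction (i)$\Rightarrow$(ii) essentially for free, and (ii)$\Rightarrow$(i) follows from a short unraveling.
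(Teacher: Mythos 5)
Your proof is correct and reaches the same conclusion via the same geometric lemma (Lemma~\ref{lem:convex}), but it is substantially more careful than the paper's two-paragraph argument about the local-to-global step. The paper asserts directly that ``edges with positive outitude correspond to edges in the outer hull'' and that ``the lift of $\Delta$ must bound a convex set,'' implicitly taking for granted that the local outer-hull condition of Lemma~\ref{lem:convex} at every edge forces the lifted PL surface $\Sigma = \hull(\widetilde\Delta)$ to coincide globally with the outer hull of $\mathcal C = \conv(\vec_dec)$. Your treatment makes this precise: you introduce the radial height function $f$ on $\Omega$ for $\Sigma$, verify $f\geq g$ with equality at decorated vertices, show $1/f$ is affine on projected triangles (the barycentric computation with $\beta_i = \alpha_i z_i/t$), reinterpret the trichotomy of Lemma~\ref{lem:convex} as the sign of the break of $1/f$ across a projected edge, and then use Jensen's inequality for the resulting concave PL function to pin $1/f$ between $1/g$ and the affine interpolant over each face of $\Delta(x)$, forcing $1/f\equiv 1/g$. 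This cleanly delivers the hard direction (i)$\Rightarrow$(ii), and your (ii)$\Rightarrow$(i) coplanarity argument is tight because $\widetilde\Delta$ and the outer hull have the same vertex set $\vec_dec$. The trade-off is that your write-up is longer, but in exchange it supplies a self-contained justification of exactly the convexity transfer that the paper's proof glosses over (and which is otherwise deferred to the Tillmann--Wong reference used later in \S\ref{subsec:flip algo}).
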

\begin{proof}
Suppose that $\Delta$ is canonical for $x=(\Omega, \Gamma, \phi, (\vec_dec , \covec_dec))$. 
Let $\Delta'$ denote the ideal cell decomposition of $S$ determined by Cooper and Long's convex hull construction for $x$. 
Recall that $\Delta'$ is obtained by taking the convex hull $\mathcal{C}$ of the vectors in $(\vec_dec , \covec_dec)$ and then projecting the edges of $\mathcal{C}$ to the $\Omega / \Gamma$.
We identify $\Delta$ with a lift of $\Delta$ to a concrete decorated triangulation in $\R^3$. We have seen in Lemma \ref{lem:convex} that edges with positive outitude correspond to edges in the outer hull $\Delta$. These outer edges determine edges of $\Delta'$ by the convex hull construction.
Furthermore, those edges of $\Delta$ with vanishing outitude do not appear as edges of the convex hull, and therefore they are not contained in $\Delta'$.

Conversely suppose that the edges of $\Delta$ along which the outitude is strictly positive produce the ideal cell decomposition $\Delta' \subseteq \Delta$ determined by the convex hull construction. 
Then the lift of $\Delta$ must bound a convex set whose edges are exactly lifts of edges of $\Delta'$. 
Therefore the outitudes along edges of $\Delta$ must be nonnegative again by Lemma \ref{lem:convex}.
\end{proof}

Fix a triangulation $\Delta$ of $S$ and let $x \in \decTtf(S)$.
Denote by $\alpha = \psi(x) \in \mathcal{A}$ the corresponding $\mathcal{A}$-coordinates, where $\psi$ is the bijection from Theorem \ref{thm:A-coords}.
In this case we denote $\Out_\alpha(e) := \Out_x(e)$ interchangeably.

\begin{lem} \label{lem:outitude_A-coords}
Let $\Delta \subset S$ be an ideal triangulation and let $\alpha \in \mathcal{A}$ be a doubly-decorated convex projective structure described via $\mathcal{A}$-coordinates on $\Delta$.
For an edge $e \in \Delta$ we denote the triangle and edge coordinates of $\alpha$ around $e$ as depicted in Figure \ref{fig:outitude_A-coords}.
Then in terms of $\alpha$ the outitude along $e$ is given by
\begin{equation}\label{eq:A_Out}
\Out_\alpha(e) = A (e^+c^+ + e^-d^- - e^+ e^- ) + B ( e^+b^+ + e^-a^- - e^+ e^-) \enspace .
\end{equation}
\end{lem}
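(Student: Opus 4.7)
The plan is to unfold the definition \eqref{eq:outitude_def1} and express each of $\det(D)$ and $\det(D')$ in terms of the $\A$-coordinates around $e$. Pick a concrete decorated lift of the two triangles sharing $e$, with vertices $\vtx_0,\vtx_1,\vtx_2$ on the triangle of parameter $A$ and $\vtx_0,\vtx_2,\vtx_3$ on the triangle of parameter $B$, and assume the labelling of edge parameters follows the convention in Figure~\ref{fig:outitude_A-coords}, namely $e^+=\covtx_0\cdot\vtx_2$, $e^-=\covtx_2\cdot\vtx_0$, $b^+=\covtx_2\cdot\vtx_1$, $a^-=\covtx_0\cdot\vtx_1$, $c^+=\covtx_2\cdot\vtx_3$, $d^-=\covtx_0\cdot\vtx_3$.

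First I would use that $\{\vtx_0,\vtx_1,\vtx_2\}$ is a basis of $\R^3$ (guaranteed since $A=\det(\vtx_0|\vtx_1|\vtx_2)>0$) and write $\vtx_3=\alpha\vtx_0+\beta\vtx_1+\gamma\vtx_2$. Expanding the determinants and using multilinearity gives immediately
\[
\det(D)=\det(\vtx_1|\vtx_2|\vtx_3)=\alpha A,\qquad \det(D')=\det(\vtx_0|\vtx_1|\vtx_3)=\gamma A,\qquad B=\det(\vtx_0|\vtx_2|\vtx_3)=-\beta A,
\]
so the three coefficients $(\alpha,\beta,\gamma)$ are determined by $A$, $B$, $\det(D)$, $\det(D')$. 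This is exactly the step made explicit in the proof of Lemma~\ref{lem:edge_flip}.

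Next I would apply the covectors $\covtx_0$ and $\covtx_2$ to the expression $\vtx_3=\alpha\vtx_0+\beta\vtx_1+\gamma\vtx_2$. Using $\covtx_0\cdot\vtx_0=0=\covtx_2\cdot\vtx_2$ these collapse to
\[
d^-=\covtx_0\cdot\vtx_3=\beta a^-+\gamma e^+,\qquad c^+=\covtx_2\cdot\vtx_3=\alpha e^-+\beta b^+.
\]
Substituting $\alpha A=\det(D)$, $\gamma A=\det(D')$, $\beta A=-B$ and solving yields
\[
\det(D)=\frac{A c^+ + B b^+}{e^-},\qquad \det(D')=\frac{A d^- + B a^-}{e^+}.
\]

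Finally I would insert these into $\Out(e)=e^+e^-(\det(D)+\det(D')-A-B)$. The factors $e^+e^-$ cancel the denominators, leaving
\[
\Out_\alpha(e)=e^+(A c^++B b^+)+e^-(A d^-+B a^-)-A e^+e^--B e^+e^-,
\]
which rearranges to the desired identity. The only genuine subtlety is bookkeeping: one must verify which oriented edge parameter is denoted $a^\pm,b^\pm,c^\pm,d^\pm$ in Figure~\ref{fig:outitude_A-coords}. Once the convention is fixed by matching a $\pm$-symbol near a vertex $V$ with $\covtx_V$ applied to the opposite vertex, the algebraic identification is a one-line calculation from multilinearity of the determinant.
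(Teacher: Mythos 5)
Your proof is correct, but it takes a cleaner route than the paper. The paper's own proof fixes a gauge ($C=\lambda\,\mathrm{Id}_3$ with $\lambda^3=A$), parameterises the covectors $\covtx_0,\covtx_2$ and the vector $\vtx_3$ in ambient coordinates, and then expands both sides of \eqref{eq:A_Out} explicitly to check they agree. You instead expand $\vtx_3$ in the basis $\{\vtx_0,\vtx_1,\vtx_2\}$, read off $\alpha A=\det(D)$, $\gamma A=\det(D')$, $-\beta A=B$ by multilinearity (as in Lemma~\ref{lem:edge_flip}), apply $\covtx_0$ and $\covtx_2$ to obtain the two linear relations $d^-=\beta a^-+\gamma e^+$ and $c^+=\alpha e^-+\beta b^+$, solve for $\det(D)$ and $\det(D')$, and substitute into the definition of outitude. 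This avoids the normalisation gauge entirely and makes transparent that \eqref{eq:A_Out} is an immediate consequence of the edge-flip transition formulas \eqref{eq:flip_triang} (indeed you re-derive them on the way). It is a bit more structural and arguably easier to check than the coordinate expansion in the paper; both are valid. Your identification of the edge-parameter labels from Figure~\ref{fig:outitude_A-coords} is consistent with the paper's convention.
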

\begin{proof}
Following the procedure described at the start of \S\ref{subsec:A-coords} we lift $\Delta$ to an concrete decorated triangulation \textcolor{red}{$\Lambda$} in $\R^3$ according to the doubly-decorated structure $\alpha \in \mathcal{A}$.
Let $(R, C)$ and $(R', C')$ be two concrete decorated triangles in \textcolor{red}{$\Lambda$} that share a lift of $e$.
We denote the four matrices that contribute to $\Out_\alpha(e)$ by $C = ( \vtx_0 \mid \vtx_1 \mid \vtx_2 )$ and $C' = ( \vtx_0 \mid \vtx_2 \mid \vtx_3 )$ as well as $D = ( \vtx_0 \mid \vtx_1 \mid \vtx_3 )$ and $D' = ( \vtx_3 \mid \vtx_1 \mid \vtx_2 )$.
Without loss of generality we may assume $C = \lambda \Id_3$ where $\lambda^3 = \det(C) = A$. Let $\covtx_i$ be the covector whose kernel contains the ideal vertex $\vtx_i$. Then we may write the covectors $\covtx_0$ and $\covtx_2$ as 
\[
\covtx_0 = \begin{pmatrix} 0 & y_0 & z_0 \end{pmatrix} \quad \text{and} \quad \covtx_2 = \begin{pmatrix} x_2 & y_2 & 0 \end{pmatrix} \enspace .
\]
Let $C_3 = \begin{pmatrix} X_3 & Y_3 & Z_3 \end{pmatrix}^T$. Then we obtain
\begin{align*}
\Out_\alpha(e) &= e^+ e^- \left(\det(D) + \det(D') -\det(C) - \det(C') \right) \\
&= e^+ e^- \lambda^2 (X_3 + Y_3 + Z_3 - \lambda) \\
&= \lambda^4 z_0 x_2 ( X_3 + Y_3 + Z_3 - \lambda ) \enspace .
\end{align*}
Conversely we have
\begin{align*}
& A (e^+c^+ + e^-d^- - e^+ e^- ) + B ( e^+b^+ + e^-a^- - e^+ e^-)  \\
&= \lambda^3( \lambda z_0 ( x_2 X_3 + y_2 Y_3 ) + \lambda x_2 (y_0 Y_3 + z_0 Z_3) - \lambda^2 z_0 x_2 ) - \lambda^2Y_3 ( \lambda^2 z_0 y_2 + \lambda^2 x_2y_0 - \lambda^2z_0x_2  ) \\
&= \lambda^4 \big( \left( z_0x_2X_3 + z_0y_2Y_3 + x_2y_0Y_3 + z_0Z_3x_2 - \lambda z_0 x_2 ) - Y_3 ( z_0 y_2 + x_2 y_0 - z_0 x_2 \right) \big) \\
&= \lambda^4 z_0 x_2( X_3 + Z_3 + Y_3 - \lambda) \enspace .
\end{align*}
This completes the lemma.
\end{proof}

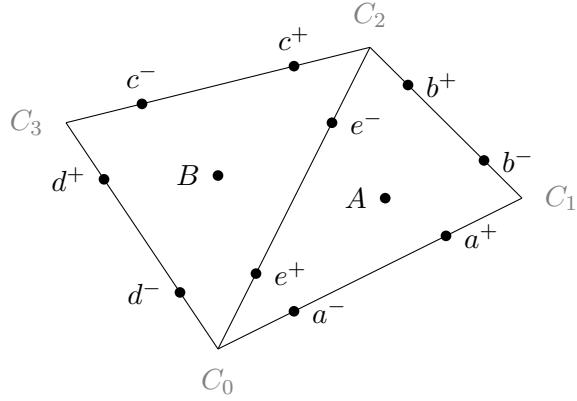
\begin{figure}
\center
\begin{tikzpicture}
	\draw (4, 4) edge  (6, 2); 
	\draw (6, 2) edge (2,0); 
	\draw (2, 0) edge (4, 4); 
	\draw (4, 4) edge (0, 3); 
	\draw (0, 3) edge (2, 0); 

	\node [label = below : {\textcolor{gray}{$\vtx_0$}} ] (a) at (2, 0) {};
	\node [label = right : {\textcolor{gray}{$\vtx_1$}} ] (a) at (6, 2) {};
	\node [label = above : {\textcolor{gray}{$\vtx_2$}} ] (a) at (4, 4) {};
	\node [label = left : {\textcolor{gray}{$\vtx_3$}} ] (a) at (0, 3) {};
	\node [circle, fill = black, inner sep = 0pt, minimum size = 4pt, label = left : {$A$} ] (A) at (4.2, 2) {};	
	\node [circle, fill = black, inner sep = 0pt, minimum size = 4pt, label = left : {$B$} ] (B) at (2, 2.3) {};
	\node [circle, fill = black, inner sep = 0pt, minimum size = 4pt, label = right : {$e^+$} ] (ep) at (2.5, 1) {};
	\node [circle, fill = black, inner sep = 0pt, minimum size = 4pt, label = right : {$e^-$} ] (em) at (3.5, 3) {};
	\node [circle, fill = black, inner sep = 0pt, minimum size = 4pt, label = right : {$a^-$} ] (am) at (3, 0.5) {};
	\node [circle, fill = black, inner sep = 0pt, minimum size = 4pt, label = right : {$a^+$} ] (ap) at (5, 1.5) {};
	\node [circle, fill = black, inner sep = 0pt, minimum size = 4pt, label = right : {$b^+$} ] (bp) at (4.5, 3.5) {};
	\node [circle, fill = black, inner sep = 0pt, minimum size = 4pt, label = right : {$b^-$} ] (bm) at (5.5, 2.5) {};
	\node [circle, fill = black, inner sep = 0pt, minimum size = 4pt, label = above : {$c^+$} ] (cp) at (3, 3.75) {};
	\node [circle, fill = black, inner sep = 0pt, minimum size = 4pt, label = above : {$c^-$} ] (cm) at (1, 3.25) {};
	\node [circle, fill = black, inner sep = 0pt, minimum size = 4pt, label = left : {$d^+$} ] (dp) at (0.5, 2.25) {};
	\node [circle, fill = black, inner sep = 0pt, minimum size = 4pt, label = left : {$d^-$} ] (dm) at (1.5, 0.75) {};
\end{tikzpicture}
\caption{The edge and triangle parameters on two adjacent triangles. $\Out(e)$ is determined by the two triangle parameters $A=\det( \vtx_0 \mid \vtx_1 \mid \vtx_2 )$ and $B=\det( \vtx_0 \mid \vtx_2 \mid \vtx_3 )$ and the six edges parameters proximal to $\vtx_0$ and $\vtx_2$, namely $e^+, e^-,a^-,b^+,c^+$ and $d^-$.} \label{fig:outitude_A-coords}
\end{figure}


\subsection{Description of the putative cells in moduli space}
\label{subsec:putative cells}

We arrive at the following $\mathcal{A}$-coordinate description of the sets defined in \eqref{eq:def_open_cell} and \eqref{eq:def_closed_cell}.   
\begin{cor} \label{cor:cell_decomposition}
Let $\Delta$ be an ideal cell decomposition of $S$ and let $\Delta'$ be an ideal triangulation refining $\Delta$. 
Let $\{ b_j  \}_{ j \in J }$ denote the edges of $\Delta$ and let $\{ a_k \}_{ k \in K }$ denote the set of edges in $\Delta' \setminus \Delta$.
Then the image of the sets defined in \eqref{eq:def_open_cell} and \eqref{eq:def_closed_cell} under the bijection $\psi = \psi_{\Delta'}$ from Theorem \ref{thm:A-coords} is given by the semi-algebraic sets
\begin{align}
\psi( \mathring{ \mathcal{C}}( \Delta ) ) &= \left\{ \alpha \in \mathcal{A}_{\Delta'} \mid \Out_\alpha( b_j ) > 0 \hspace{2mm} \forall \hspace{2mm} j \in J \text{ and } \Out_\alpha( a_k ) = 0 \hspace{2mm} \forall \hspace{2mm} k \in K \right\} \enspace , \\
\psi( \mathcal{C}( \Delta ) ) &= \left\{ \alpha \in \mathcal{A}_{\Delta'} \mid \Out_\alpha( b_j ) \geq 0 \hspace{2mm} \forall \hspace{2mm} j \in J \text{ and } \Out_\alpha( a_k ) = 0 \hspace{2mm} \forall \hspace{2mm} k \in K \right\} \enspace .
\end{align}
\end{cor}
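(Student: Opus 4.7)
The plan is to translate both sides of the claimed identities into statements about the signs of outitudes along the edges of $\Delta'$, and then appeal directly to Corollary \ref{cor:canonical}. The backbone of the argument is the dictionary between outitude signs (on the one hand) and edges of the convex hull underlying Cooper and Long's construction (on the other), which is already essentially established.

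First I would recall the geometric content of outitude from Lemma \ref{lem:convex}. Lifting $\Delta'$ to a concrete decorated triangulation $\Lambda \subset \R^3$ compatible with $\alpha = \psi_{\Delta'}(x)$, an edge $e \in \Delta'$ has $\Out_\alpha(e) > 0$ precisely when the two adjacent triangles of $\Lambda$ already belong to the outer (convex) hull of their four vertices, and $\Out_\alpha(e) = 0$ precisely when the four vertices are coplanar, i.e.\ when the two adjacent triangles together form a flat quadrilateral lying in a supporting plane of that convex hull. Since $\Delta(x)$ is obtained by projecting the boundary faces of the convex hull of the vector decoration, this identifies the positive-outitude edges of $\Delta'$ as exactly the edges that survive in $\Delta(x)$, and the zero-outitude edges of $\Delta'$ as diagonals of the (possibly non-triangular) faces of $\Delta(x)$.

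For the forward direction, suppose $x \in \mathcal{C}(\Delta)$, so that $\Delta(x) \subseteq \Delta$. Since $\Delta' \supseteq \Delta \supseteq \Delta(x)$, the triangulation $\Delta'$ refines $\Delta(x)$, and each edge of $\Delta' \setminus \Delta(x)$ lies inside a polygonal face of $\Delta(x)$. Consequently each edge $a_k \in \Delta' \setminus \Delta$ satisfies $\Out_\alpha(a_k) = 0$, while each $b_j \in \Delta$ satisfies $\Out_\alpha(b_j) > 0$ if $b_j \in \Delta(x)$ and $\Out_\alpha(b_j) = 0$ otherwise; in particular $\Out_\alpha(b_j) \ge 0$ in every case. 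If additionally $x \in \mathring{\mathcal{C}}(\Delta)$, then $\Delta(x) = \Delta$, so every $b_j$ belongs to the boundary of the convex hull and hence $\Out_\alpha(b_j) > 0$.

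For the converse, assume $\alpha = \psi_{\Delta'}(x)$ satisfies the displayed outitude conditions. Then every edge of $\Delta'$ has nonnegative outitude, so $\Delta'$ is canonical for $x$, and Corollary \ref{cor:canonical} identifies $\Delta(x)$ with the set of edges of $\Delta'$ of strictly positive outitude. Since every $a_k$ has zero outitude, $\Delta(x) \subseteq \{b_j\}_{j \in J} = \Delta$, giving $x \in \mathcal{C}(\Delta)$; if in addition every $b_j$ has strictly positive outitude, then $\Delta(x) = \Delta$ and $x \in \mathring{\mathcal{C}}(\Delta)$. The only step requiring a little care is the implication, used in the forward direction, that edges of $\Delta' \setminus \Delta(x)$ have outitude exactly zero; this is the main (modest) obstacle, and it is handled by applying Lemma \ref{lem:convex}(iii) to the degenerate tetrahedron whose four vertices lie in the supporting plane of the corresponding face of the convex hull.
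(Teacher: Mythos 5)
Your argument is correct, and it spells out essentially the argument the paper leaves implicit: the corollary is stated without a written proof, being treated as a direct consequence of Lemma~\ref{lem:convex} and Corollary~\ref{cor:canonical}. Your forward direction (using the degenerate-tetrahedron case of Lemma~\ref{lem:convex}(iii) for diagonals of faces of $\Delta(x)$, and the non-degenerate case for genuine edges of $\Delta(x)$) and your converse (invoking Corollary~\ref{cor:canonical} once nonnegativity of all outitudes is in hand) are exactly what the authors have in mind, so there is nothing substantively different to compare.
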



\subsection{An algorithm to compute the canonical cell decomposition of a surface}
\label{subsec:flip algo}

A particularly nice feature of the $\mathcal{A}$-coordinates is that they provide us with a nice algorithm to determine a canonical triangulation for a given (doubly-)decorated convex projective structure.
The key idea of this algorithm has been established by Tillmann and Wong \cite{TiWo-algorithm-2016}. 
It can be explained as follows. 
Assume we are given a convex projective structure of finite area on a surface $S$ together with a vector decoration $\vec_dec$.
To check if a triangulation $\Delta \subset S$ is canonical, consider a lift to a concrete triangulation $\widetilde{\Delta}$ with vertex set $\vec_dec$.
Now $\Delta$ is canonical if each edge $e \in \Delta$ satisfies the local convexity condition, meaning that the union of the hulls of the two concrete triangles adjacent to a lift of $e$ 
is the bottom of the (possibly degenerate) tetrahedron formed by their vertices.
If an edge violates the local convexity condition, we may flip it to introduce a new edge that satisfies the local convexity condition.
Tillmann and Wong showed that flipping edges that violate the local convexity condition terminates and ultimately produces a canonical triangulation.

However, the latter algorithm involves a lot of additional computations.
For example, one needs to compute a holonomy representation of the chosen convex projective structure in order to determine a sufficiently large subset of the decoration $\vec_dec$.
The advantage of using $\mathcal{A}$-coordinates is that all the necessary data is intrinsic. This provides the algorithm of Tillmann and Wong with a more efficient data structure.

Let us describe the algorithm using $\mathcal{A}$-coordinates.
Fix an ideal triangulation $\Delta \subset S$ along with the corresponding $\mathcal{A}$-coordinates $\mathcal{A}_{\Delta}$.
Let $\alpha \in \mathcal{A}_{\Delta}$ be a doubly-decorated convex projective structure.
We wish to describe an ideal triangulation $\Delta(\alpha)$ that is canonical for $\alpha$.
Each edge $e\in \Delta$ comes with an associated outitude value $\Out_\alpha(e)$ described in \eqref{eq:A_Out}.
Note that by Lemma \ref{lem:convex} $\Out_\alpha(e)$ is nonnegative if and only if $e$ satisfies the local convexity condition.
Thus, we may apply the algorithm of Tillmann and Wong by successively flipping edges whose outitute is negative.
This produces a sequence of triangulations
\[ \left( \Delta = \Delta_0, \Delta_1, \ldots , \Delta_{m-1}, \Delta_m = \Delta(\alpha) \right) \]
where each $\Delta_{i+1}$ can be obtained from $\Delta_i$ by performing an edge flip and the final triangulation is canonical for $\alpha \in \mathcal{A}_\Delta$.

\begin{exm}\label{ex:flipping}
The following example is based on an implementation of the flip-algorithm for $\A$-coordinates in \texttt{polymake} \cite{polymake}.
Consider the once-punctured torus $S_{1,1}$. 
We choose a triangulation $\Delta_0$ of $S_{1,1}$ as in the top left of Figure \ref{fig:example_flipping} (with the usual edge identifications) together with the depicted edge and triangle parameters \[
\alpha_0 = (A,B,a^+,a^-,b^+,b^-,c^+,c^-) =(\tfrac{107}{12},\tfrac{95}{18},1,1,\tfrac{17}{6},\tfrac{25}{12},\tfrac{1145}{72},\tfrac{1289}{72}) \in \mathcal{A}_{\Delta_0} \cong \R_{>0}^8 \enspace .
\]
This defines a decorated convex projective structure $x = \psi_{\Delta_0}^{-1}(\alpha_0) \in \decTtf(S_{1,1})$ on the once punctured torus.
The three outitude values are 
\[
\Out_{x}(a) = 265.629 \qquad \Out_{x}(b) = 548.357 \qquad  \Out_{x}(c) = -3234.55 \enspace .
\]
Since the outitude along edge $c \in \Delta_0$ (colored red) is negative, we know that $\Delta_0$ is not canonical for $x$.
However, the algorithm tells us that we should flip this edge to find a canonical triangulation.
Flipping edge $c$ introduces a new edge $d$ and results in a new triangulation $\Delta_1$.
Using the coordinate change equations \eqref{eq:flip_triang} we see that the $\mathcal{A}_{\Delta_1}$-coordinates of $x$ are given by
$\alpha_1 = (\tfrac{3}{2},\tfrac{4}{3},1,1,\tfrac{17}{6},\tfrac{25}{12},1,\tfrac{1}{2}) \in \mathcal{A}_{\Delta_1}.$ 
The outitude values for the triangulation $\Delta_1$ are 
 \[
 \Out_{x}(a) = 6.36111 \qquad \Out_x(b) = -4.91898 \qquad  \Out_x(d) = 5.68056 \enspace 
 \]
and as expected $\Out_{x}(d)$ is positive.
Unfortunately, now the outitude along edge $b \in \Delta_1$ (colored blue) is negative.
We remove $b$ to introduce its flipped counterpart $e$ and obtain yet another triangulation $\Delta_2$.
The $\mathcal{A}_{\Delta_2}$-coordinates of $x$ are given by $\alpha_2 = (1,1,1,1,1,\tfrac{3}{2},1,\tfrac{1}{2}) \in \mathcal{A}_{\Delta_1}.$  
Finally, the outitude values in $\Delta_2$ are 
\[
\Out_{x}(a) = 2 \qquad \Out_x(b) = 1.25 \qquad  \Out_x(d) = 2.25 \enspace 
\]
and since these are all positive the triangulation $\Delta_2$ is canonical for the decorated convex projective structure $x \in \decTtf(S_{1,1})$ we 
chose when we fixed a set of $\A$-coordinates on $\Delta_0$.
\begin{figure}
	\centering
	\definecolor{myblue}{rgb}{0.19, 0.55, 0.91}
	\definecolor{myred}{rgb}{1.0, 0.21, 0.37}
	\begin{tikzpicture}
	\node at (-5.5,0) {\begin{tikzpicture}
	\begin{scriptsize}
	\node [circle, fill = red1, inner sep = 0pt, minimum size = 4pt, label = above : {$\tfrac{107}{12}$} ] (A) at (1,2) {};
	\node [circle, fill = red1, inner sep = 0pt, minimum size = 4pt, label = above : {$\tfrac{95}{18}$} ] (B) at (2,1) {};	
	\draw (0,0) -- (0,3) -- (3,3) -- (3,0) -- cycle;
	\draw[myred, line width=0.7pt] (0,0) -- (3,3);
	\node [circle, fill = blue1, inner sep = 0pt, minimum size = 4pt, label = below : {$1$}] at ( 1, 0 ) {};
	\node [circle, fill = blue1, inner sep = 0pt, minimum size = 4pt, label = below : {$1$}] at ( 2, 0 ) {};
	\node [circle, fill = blue1, inner sep = 0pt, minimum size = 4pt, label = left : {$\tfrac{17}{6}$}] at ( 0, 1 ) {}; 
	\node [circle, fill = blue1, inner sep = 0pt, minimum size = 4pt, label = left : {$\tfrac{25}{12}$}] at ( 0, 2 ) {}; 
	\node [circle, fill = blue1, inner sep = 0pt, minimum size = 4pt, label = above : {$1$}] at ( 1, 3 ) {};
	\node [circle, fill = blue1, inner sep = 0pt, minimum size = 4pt, label = above : {$1$}] at ( 2, 3 ) {};
	\node [circle, fill = blue1, inner sep = 0pt, minimum size = 4pt, label = right : {$\tfrac{17}{6}$}] at ( 3, 1 ) {}; 
	\node [circle, fill = blue1, inner sep = 0pt, minimum size = 4pt, label = right : {$\tfrac{25}{12}$}] at ( 3, 2 ) {}; 
	\node [circle, fill = blue1, inner sep = 0pt, minimum size = 4pt, label = above : {$\tfrac{1145}{72}$}] at ( 2, 2 ) {};
	\node [circle, fill = blue1, inner sep = 0pt, minimum size = 4pt, label = above : {$\tfrac{1289}{72}$}] at ( 1, 1 ) {};	
	\end{scriptsize}	
	\end{tikzpicture}};
	\node at (0,0) {\begin{tikzpicture}
	\begin{scriptsize}
	\node [circle, fill = red1, inner sep = 0pt, minimum size = 4pt, label = above : {$\tfrac{3}{2}$} ] (A) at (1,2) {};
	\node [circle, fill = red1, inner sep = 0pt, minimum size = 4pt, label = above : {$\tfrac{4}{3}$} ] (B) at (2,1) {};	
	\draw (0,0) -- (0,3) -- (3,3) -- (3,0) -- cycle;
	\draw[myblue, line width=0.7pt] (0,0) -- (3,3);
	\node [circle, fill = blue1, inner sep = 0pt, minimum size = 4pt, label = below : {$1$}] at ( 1, 0 ) {};
	\node [circle, fill = blue1, inner sep = 0pt, minimum size = 4pt, label = below : {$1$}] at ( 2, 0 ) {};
	\node [circle, fill = blue1, inner sep = 0pt, minimum size = 4pt, label = left : {$1$}] at ( 0, 1 ) {}; 
	\node [circle, fill = blue1, inner sep = 0pt, minimum size = 4pt, label = left : {$\tfrac{1}{2}$}] at ( 0, 2 ) {}; 
	\node [circle, fill = blue1, inner sep = 0pt, minimum size = 4pt, label = above : {$1$}] at ( 1, 3 ) {};
	\node [circle, fill = blue1, inner sep = 0pt, minimum size = 4pt, label = above : {$1$}] at ( 2, 3 ) {};
	\node [circle, fill = blue1, inner sep = 0pt, minimum size = 4pt, label = right : {$1$}] at ( 3, 1 ) {}; 
	\node [circle, fill = blue1, inner sep = 0pt, minimum size = 4pt, label = right : {$\tfrac{1}{2}$}] at ( 3, 2 ) {}; 
	\node [circle, fill = blue1, inner sep = 0pt, minimum size = 4pt, label = above : {$\tfrac{25}{12}$}] at ( 2, 2 ) {};
	\node [circle, fill = blue1, inner sep = 0pt, minimum size = 4pt, label = above : {$\tfrac{17}{6}$}] at ( 1, 1 ) {};	
	\end{scriptsize}	
	\end{tikzpicture}};
	\node at (5.5,0) {\begin{tikzpicture}
	\begin{scriptsize}
	\node [circle, fill = red1, inner sep = 0pt, minimum size = 4pt, label = above : {$1$} ] (A) at (1,2) {};
	\node [circle, fill = red1, inner sep = 0pt, minimum size = 4pt, label = above : {$1$} ] (B) at (2,1) {};	
	\draw (0,0) -- (0,3) -- (3,3) -- (3,0) -- cycle;
	\draw (0,0) -- (3,3);
	\node [circle, fill = blue1, inner sep = 0pt, minimum size = 4pt, label = below : {$1$}] at ( 1, 0 ) {};
	\node [circle, fill = blue1, inner sep = 0pt, minimum size = 4pt, label = below : {$1$}] at ( 2, 0 ) {};
	\node [circle, fill = blue1, inner sep = 0pt, minimum size = 4pt, label = left : {$\tfrac{3}{2}$}] at ( 0, 1 ) {}; 
	\node [circle, fill = blue1, inner sep = 0pt, minimum size = 4pt, label = left : {$1$}] at ( 0, 2 ) {}; 
	\node [circle, fill = blue1, inner sep = 0pt, minimum size = 4pt, label = above : {$1$}] at ( 1, 3 ) {};
	\node [circle, fill = blue1, inner sep = 0pt, minimum size = 4pt, label = above : {$1$}] at ( 2, 3 ) {};
	\node [circle, fill = blue1, inner sep = 0pt, minimum size = 4pt, label = right : {$\tfrac{3}{2}$}] at ( 3, 1 ) {}; 
	\node [circle, fill = blue1, inner sep = 0pt, minimum size = 4pt, label = right : {$1$}] at ( 3, 2 ) {}; 
	\node [circle, fill = blue1, inner sep = 0pt, minimum size = 4pt, label = above : {$\tfrac{1}{2}$}] at ( 2, 2 ) {};
	\node [circle, fill = blue1, inner sep = 0pt, minimum size = 4pt, label = above : {$1$}] at ( 1, 1 ) {};	
	\end{scriptsize}	
	\end{tikzpicture}};
	\end{tikzpicture}
	\includegraphics[width=0.9\linewidth]{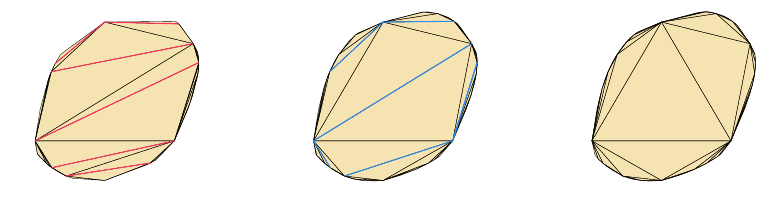}
	\caption{The three triangulations of the flip algorithm from Example \ref{ex:flipping}. The top row depicts three different $\mathcal{A}$-coordinate charts for the convex projective structure $x \in \decTtf(S_{1,1})$. The bottom row shows a finite part of the three triangulations developed into the corresponding convex set $\Omega$.} \label{fig:example_flipping}
\end{figure}
\end{exm}


\section{The cell decomposition of moduli space}
\label{sec:main_proof}
\label{subsec:main_results}

Let $\Delta \subset S$ be an 
 ideal cell decomposition, refined by an ideal triangulation $\Delta^\prime \subset S$.
This section is devoted to showing that 
$\mathring{\mathcal{C}}(\Delta)$ is a cell of real dimension $\mid \underline{E} \mid  + \mid T \mid -  \mid K \mid $ where $ \mid K \mid $ is the number of 
unoriented edges of $\Delta^\prime \setminus \Delta$ and we recall that $\underline{E}$ denotes the set of oriented edges and $T$ the set of triangles of 
$\Delta^\prime$.
If there is no ambiguity as to the ideal triangulation $\Delta^\prime$ then we will identify $\mathring{\mathcal{C}}(\Delta)$ with its image under $\psi_{\Delta^\prime}$ inside $\A_{\Delta^\prime}$ without further comment.

We first study the case in which $\Delta$ is an ideal triangulation of $S$.
In what follows we will denote the corresponding $\A$-coordinates by $(\trCoords , \eCoords) \in \A \cong \R_{>0}^{ T + \underline{E} }$.
That is, we denote the triangle parameters by $\trCoords \in \R_{>0}^T$ and the edge parameters by $\eCoords \in \R_{>0}^{\underline{E} }$.

\begin{lem}\label{lem:main1}
Let $\Delta \subset S$ be an ideal triangulation. 
Fix an assignment $\trCoords \in \R^T_{>0}$ of positive real numbers to the ideal triangles of $\Delta$. Then the semi-algebraic set
\[
X_{\trCoords} := \left\{ \eCoords \in \R_{>0}^{\underline{E} } ~\mid~ \Out_{( \trCoords , \eCoords )}(e) > 0 \hspace{2mm} \text{ for all } \hspace{2mm}  e \in \Delta \right\} \subseteq \R_{>0}^{ \underline{E} }
\]
is homeomorphic to $\R^{ \underline{E} }_{>0}$.
\end{lem}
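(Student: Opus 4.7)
My plan is to show that $X_{\trCoords}$ is a nonempty, open, star-shaped subset of $\R^{|\underline{E}|}$, and then invoke the classical fact that every open star-shaped subset of Euclidean space is homeomorphic to Euclidean space. The natural candidate for the star-center is $\eCoords^{\ast} := (1, 1, \ldots, 1)$: substituting into formula \eqref{eq:A_Out} of Lemma~\ref{lem:outitude_A-coords}, each outitude at $\eCoords^{\ast}$ collapses to $A(1+1-1) + B(1+1-1) = A+B$, which is strictly positive because the triangle parameters in $\trCoords$ are positive. Thus $\eCoords^{\ast} \in X_{\trCoords}$, and openness of $X_{\trCoords}$ in $\R_{>0}^{\underline{E}}$ is immediate from continuity of the outitudes.

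The main computation is the verification of star-shapedness. Given $\eCoords \in X_{\trCoords}$, I would consider the affine segment $\eCoords_t := (1-t) \eCoords^{\ast} + t \eCoords$, for $t \in [0,1]$. Each edge parameter entry takes the form $f_t = (1-t) + tf$, and so each bilinear product appearing in the outitude expands as $f_t g_t = (1-t)^2 + (1-t)t(f+g) + t^2 fg$. Plugging this into \eqref{eq:A_Out} and exploiting the telescoping identity $(e^+ + c^+) + (e^- + d^-) - (e^+ + e^-) = c^+ + d^-$, together with its counterpart around the second adjacent triangle, one reorganizes the outitude along the segment as
\[
\Out_{(\trCoords, \eCoords_t)}(e) = (A+B)(1-t)^2 + \bigl[A(c^+ + d^-) + B(a^- + b^+)\bigr] (1-t)t + \Out_{(\trCoords, \eCoords)}(e)\, t^2.
\]
This is a quadratic in $t$ expressed in the Bernstein basis of $[0,1]$, and all three Bernstein coefficients are strictly positive: $A+B > 0$ trivially; the middle coefficient is a positive combination of positive edge parameters; and the leading coefficient is positive by the standing hypothesis $\eCoords \in X_{\trCoords}$. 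Hence $\Out_{(\trCoords, \eCoords_t)}(e) > 0$ for every $t \in [0,1]$ and every edge $e$, so the whole segment remains inside $X_{\trCoords}$.

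Combining the above, $X_{\trCoords}$ is a nonempty open star-shaped subset of $\R^{|\underline{E}|}$, and applying the classical result that such sets are homeomorphic to Euclidean space (for instance via smoothing the radial gauge of the star and Brown's theorem on monotone unions of open $n$-cells) yields $X_{\trCoords} \cong \R^{|\underline{E}|} \cong \R_{>0}^{|\underline{E}|}$. I expect the Bernstein-positive rewriting to be the crux: \emph{a priori}, a degree-$2$ polynomial in $t$ that is positive at both endpoints can still dip below zero in between, but the specific algebraic structure of the outitude formula, combined with the choice of the all-ones vector as the star-center, forces all three Bernstein coefficients to be manifestly positive and preserves positivity on the full segment.
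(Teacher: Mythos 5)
Your proposal is correct and takes essentially the same approach as the paper: the all-ones vector is chosen as star-center, the outitude along the segment is expanded into Bernstein form in $t$, all three Bernstein coefficients are observed to be positive, and the star-shaped open set is then identified with Euclidean space. (Incidentally, your middle Bernstein coefficient $A(c^+ + d^-) + B(a^- + b^+)$ is the algebraically correct one; the paper writes $A(c^+ + d^-) + B(a^+ + b^-)$, an inconsequential typo since both are positive.)
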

\begin{proof}
Let $ B_{\mathbbm{1}}(0.1)$ denote the set of points in $\R^{ \underline{E} }_{>0}$ within a Euclidean distance of $0.1$ from the all ones vector $\mathbbm{1} := ( 1, 1, \dots, 1) \in \R^{ \underline{E} }_{>0}$. 
Recall that $\Out_{( \trCoords , \eCoords )}(e)>0$ if and only if an inequality of the following form is true, cf. \eqref{eq:A_Out},
\begin{equation}\label{eq:A_Out_pf}
A (e^+c^+ + e^-d^- - e^+ e^- ) + B ( e^+b^+ + e^-a^- - e^+ e^-)  > 0 \enspace
\end{equation}
for
 $a, b, c, d \in \Delta$ each contained in the closure of a common ideal triangle with $e$ as depicted in Figure~\ref{fig:outitude_A-coords}. 
Regardless of the choice of $\trCoords \in \R^T_{>0}$ we have $B_{\mathbbm{1}}(0.1) \subset X_{\trCoords}$.  
We will construct a homeomorphism $B_{\mathbbm{1}}(0.1) \cong X_{\trCoords}$.

For all edge parameters $\eCoords \in \R_{>0}^{ \underline{E} }$, we consider a linear deformation to the center $\mathbbm{1}$ of $B_{\mathbbm{1}}(0.1)$ given by
\begin{align*}
\phi _{\eCoords:} [0, 1] &\rightarrow \R_{>0}^{ \underline{E} }  \enspace ,\\
t  &\mapsto t \eCoords + (1-t)\mathbbm{1} \enspace .
\end{align*}
The coordinate function of $\phi_{\eCoords}$ corresponding to an edge parameter $e^\pm$ will be denoted  by $\phi_{e^\pm}$.
Suppose that at  $(\trCoords , \eCoords)$ we have positive outitude along edge $e$, i.e. inequality \eqref{eq:A_Out_pf} holds.
We now show that this implies that at the deformed points $(\trCoords , \phi_{\trCoords}(t) )$ the outitude along $e$ is positive as well, that is 
\begin{align*}
 ~&A \left( \phi_{ e^+}(t) \phi_{c^+}(t) +   \phi_{e^-}(t) \phi_{d^-}(t) - \phi_{e^+}(t) \phi_{e^-}(t) \right) \\
 + ~&B \left( \phi_{e^+}(t) \phi_{b^+}(t) + \phi_{e^-}(t) \phi_{a^-}(t) - \phi_{e^+}(t) \phi_{e^-}(t) \right) > 0 \enspace ,
\end{align*}
for all $t \in [0,1]$.
We replace the functions $\phi_{e^{\pm}}(t)$ by their definitions in the above inequality and collect terms in $t^2$, $t(1-t)$ and $(1-t)^2$ to obtain
\begin{align*}
 ~&A \mbox{\Large (} ( t e^+ + 1 - t  )(t c^+ + 1 - t )  + ( t e^- + 1 - t)( t d^- + 1 - t ) \\
~& \qquad- ( t e^+ + 1 - t )  ( t e^- + 1 - t ) \mbox{\Large )} \\
+ ~&B \mbox{\Large (} (  t e^+ + 1 - t )(  t b^+ +  1 - t  ) + (  t e^- + 1-t) ( t a^- + 1 - t ) \\
 ~&\qquad - ( t e^+ + 1-t) ( t e^- + 1 - t ) \mbox{\Large )} \\
= \qquad &t^2 A \mbox{\Large (} (e^+ c^+ + e^- d^- -  e^+ e^-  )  + B ( e^+ b^+ + e^- a^- - e^+ e^-) \mbox{\Large )} \\
 + ~&t(1-t) \mbox{\Large (} A ( c^+ + d^- ) + B ( a^+  + b^- ) \mbox{\Large )} \\
 + ~&(1-t)^2 ( A + B ) \enspace .
\end{align*}
The coefficient of $t^2$ is exactly $\Out_{(\trCoords,\eCoords)}(e)$ and is therefore positive by assumption. The other terms are nonnegative since $t\in[0,1]$ and because every triangle and edge parameter is positive. 

This shows that $X_{\trCoords} \subseteq \R_{>0}^{ \underline{E} }$ is a 
star-shaped set with center $\mathbbm{1}$. 
It is open because it is defined by a finite family of algebraic inequalities and is therefore the finite intersection of open sets.
As such it is homeomorphic to any 
$| \underbar{E} |$-dimensional open ball. 
In particular, since $X_{\trCoords}$ is semi-algebraic, it is homeomorphic to $B_{\mathbbm{1}}(0.1)$ by the canonical homeomorphism along rays originating from $\mathbbm{1}$.
\end{proof}

We now show that the deformation to $\mathbbm{1}$ described in Lemma~\ref{lem:main1} induces a product structure on $\mathring{\mathcal{C}}(\Delta)$.

\begin{thm}\label{thm:main}
Let $\Delta$ be an ideal triangulation of $S$. Then the semi-algebraic set
\[
\mathring{\mathcal{C}}(\Delta) = \left\{ (\trCoords,\eCoords) \in \A \mid \Out_{(\trCoords,\eCoords)}(e) > 0 \hspace{2mm} \text{for all} \hspace{2mm}  e \in \Delta \right\}
\]
is homeomorphic to $\R^{T+ \underline{E} }_{>0}$.
\end{thm}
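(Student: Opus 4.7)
The plan is to view $\mathring{\mathcal{C}}(\Delta)$ as fibered over the triangle-parameter space $\R^T_{>0}$ via the projection $\pi\colon (\trCoords, \eCoords) \mapsto \trCoords$ and to trivialize this fibration by extending the radial construction of Lemma~\ref{lem:main1} continuously across the base.

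First, I would note that the constant section $\sigma\colon \trCoords \mapsto (\trCoords, \mathbbm{1})$ lies entirely inside $\mathring{\mathcal{C}}(\Delta)$, because at $\eCoords = \mathbbm{1}$ every outitude collapses to the sum $A + B$ of the two positive triangle parameters adjacent to the edge. Consequently each fiber $X_{\trCoords} := \pi^{-1}(\trCoords)$ is the nonempty star-shaped open set of Lemma~\ref{lem:main1}, with common star-center $\mathbbm{1}$, and the whole section forms a ``spine'' through $\mathring{\mathcal{C}}(\Delta)$.

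Second, I would bundle the fiberwise radial homeomorphisms into a global map. For each $(\trCoords, \eCoords) \in \mathring{\mathcal{C}}(\Delta)$ with $\eCoords \ne \mathbbm{1}$, put $v = \eCoords - \mathbbm{1}$ and define
\[
\rho(\trCoords, v) := \sup\{s > 0 : \mathbbm{1} + sv \in X_{\trCoords}\} \in (1, +\infty].
\]
Then set $\Phi(\trCoords, \eCoords) := \bigl(\trCoords,\; v / (1 - 1/\rho(\trCoords, v))\bigr)$ with the convention that $1/\rho = 0$ when $\rho = +\infty$, and $\Phi(\trCoords, \mathbbm{1}) := (\trCoords, 0)$. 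Fiberwise this is exactly the radial stretching from Lemma~\ref{lem:main1} that sends the segment $\{sv : 0 \le s < \rho\}$ bijectively onto the half-line $\{sv : s \ge 0\}$, producing a homeomorphism $X_{\trCoords} \to \R^{\underline{E}}$ sending $\mathbbm{1} \mapsto 0$.

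The main technical obstacle is to promote this fiberwise homeomorphism to a homeomorphism on the total space, which reduces to the continuity of the radial function $\rho$ as a map into $(0, +\infty]$ equipped with its natural one-point compactification topology. Because $X_{\trCoords}$ is cut out by finitely many strict polynomial inequalities $\Out_{(\trCoords,\, \mathbbm{1} + sv)}(e) > 0$ whose coefficients depend polynomially on $(\trCoords, v)$ and equal $A + B > 0$ at $s = 0$, the supremum $\rho$ varies continuously in $(\trCoords, v)$ by standard semi-algebraic considerations. The delicate case of a tangency (a double root of some outitude polynomial at $s = \rho$) is controlled by the star-shape of the fibers (Lemma~\ref{lem:main1}), which forbids the ray from re-entering $X_{\trCoords}$ beyond its first exit, so tangencies cannot create jump discontinuities in $\rho$. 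Granted continuity of $\rho$, $\Phi$ is a bicontinuous bijection $\mathring{\mathcal{C}}(\Delta) \to \R^T_{>0} \times \R^{\underline{E}}$, and composing with any fixed homeomorphism $\R^T_{>0} \times \R^{\underline{E}} \cong \R^{T + \underline{E}}_{>0}$ completes the proof.
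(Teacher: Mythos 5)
Your proposal is correct and follows essentially the same route as the paper's: fiber over $\R^T_{>0}$ via the triangle parameters, observe that each fiber is the nonempty open star-shaped set of Lemma~\ref{lem:main1} with common star-center $\mathbbm{1}$ (where each outitude reduces to $A+B>0$), and glue the fiberwise radial homeomorphisms into a global trivialization. Where the two differ is only in the level of detail: the paper's proof dispatches the gluing in one sentence (``since the components $\partial\clCell(\Delta)$ are defined by algebraic functions\ldots''), whereas you make the radial function $\rho$ explicit and address the one genuinely delicate point, namely that a tangential exit (double root of an outitude quadratic along the ray) could \emph{a priori} make $\rho$ jump upward; your observation that star-shapedness forbids re-entry past the first exit is exactly what rules this out, since a double root would leave every outitude strictly positive just beyond $\rho$ while one vanishes at $\rho$, contradicting the segment condition of Lemma~\ref{lem:main1}. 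In this respect your write-up is actually tighter than the published argument.
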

\begin{proof}
Lemma \ref{lem:main1} shows that for all $ \trCoords \in \R^T_{>0}$ there exists a homeomorphism 
\[
f_{\trCoords} : X_{\trCoords} \rightarrow \{ \trCoords \} \times B_{\mathbbm{1}}(0.1) \subset \R^{T+ \underline{E} }_{>0} \enspace .
\]
Since the components $\partial \clCell (\Delta)$ are defined by algebraic functions the we may glue the $f_{\trCoords}$ together to obtain the homeomorphism
\begin{align*}
f: \mathring{\mathcal{C}}(\Delta) &\rightarrow \R^T_{>0} \times B_{\mathbbm{1}}(0.1) \enspace , \\
(\trCoords , \eCoords ) & \mapsto (\trCoords , \phi_{\trCoords} ( \eCoords ) ) \enspace .
\end{align*}
Clearly $\R^T_{>0} \times B_{\mathbbm{1}}(0.1) \cong \R^{T + \underline{E} }$, completing the proof.
\end{proof}

What is left is to study the sets $\mathring{ \mathcal{C}}(\Delta)$ in the case where $\Delta$ is an ideal cell decomposition of $S$
but not an ideal triangulation.
In order to prove cellularity of $\mathring{ \mathcal{C}}(\Delta)$ we may consider any triangulation $\Delta^\prime$ refining $\Delta$, i.e. $\Delta \subset \Delta^\prime$.
The homeomorphic image of $\mathring{ \mathcal{C}}(\Delta)$ in the corresponding $\A_{\Delta^\prime}$-coordinates is given by the semi-algebraic set 
\begin{equation}\label{eq:algebraic_bd_cell}
\psi_{ \Delta^\prime }( \mathring{ \mathcal{C} } ( \Delta ) ) = \left\{ \alpha \in \A_{\Delta^\prime} ~|~ \Out_\alpha ( e ) > 0 \hspace{2mm} \forall \hspace{2mm} e \in \Delta ~ , ~ \Out_\alpha(e)=0 \hspace{2mm} \forall \hspace{2mm} e \in \Delta^\prime \setminus \Delta \right\} \enspace .
\end{equation}

Note that cellularity of $\mathring{ \mathcal{C}}(\Delta)$ follows from cellularity of \eqref{eq:algebraic_bd_cell} independent of the ideal triangulation $\Delta'$ that is chosen to refine $\Delta$.
Indeed, for two triangulations $\Delta^\prime,\Delta^{\prime \prime}$ refining $\Delta$ the transition homeomorphism $R_{\Delta^\prime,\Delta^{\prime \prime }}$ defined in \eqref{eq:def_chart_transition} restricts to a homeomorphism from $\psi_{\Delta^\prime }(\mathring{ \mathcal{C}}( \Delta ) ) $ to $\psi_{\Delta^{\prime \prime}}(\mathring{ \mathcal{C} }( \Delta ) )$.   
In the following we therefore restrict to a special class of triangulations that refine $\Delta$, namely those where all polygons of $S$ in the complement of $\Delta$ have a standard triangulation. 

\begin{lem}\label{lem:surjective}
Let $\Delta$ be an ideal cell decomposition of $S$ with ideal edges $\{ b_j \}_{j \in J}$. 
Complete $\Delta$ to an ideal triangulation $\Delta^\prime$ of $S$ such that all polygons in $S$ complementary to $\Delta$ have a standard triangulation. 
Denote the edges of $\Delta^\prime \setminus \Delta$ by $\{ a_k \}_{k \in K}$. 
Fix an assignment $\trCoords \in \R_{>0}^T$ of positive real numbers to the ideal triangles of $\Delta^\prime$. 
Denote by $\eCoordsb$ and $\eCoords$ assignments of edge parameters to $\{ b_j \}_{j \in J}$ and $\{ a_k \}_{k \in K}$ respectively so that $(\trCoords, \eCoords, \eCoordsb )$ is understood to be an element of $\A=\A_{\Delta^\prime}$. 
Then the set 
\begin{equation} \label{eq:algebraic_bd_cell_slice}
X_{\trCoords} = \left\{ (\trCoords, \eCoords, \eCoordsb )  \in \A \mid \Out_\alpha(b_j) > 0 \hspace{2mm} \forall \hspace{2mm} j \in J \hspace{2mm} \text{and} \hspace{2mm} \Out_\alpha(a_k) = 0 \hspace{2mm} \forall \hspace{2mm} k \in K \right\}
\end{equation}
is nonempty.
\end{lem}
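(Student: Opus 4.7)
The plan is to exhibit an explicit element of $X_{\trCoords}$ via a symmetric ansatz: both orientations of every edge of $\Delta^\prime$ will carry the same value, every side $b_j \in \Delta$ will be assigned $1$, and each diagonal $a_k$ will be assigned a value determined polygon-by-polygon by the requirement $\Out(a_k) = 0$.

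Fix a polygon $P$ of $\Delta$ with the standard triangulation of $\Delta^\prime$, labelling its vertices $p_0^P, \ldots, p_{n-1}^P$ so that $p_0^P$ is the common corner meeting every diagonal of $P$, writing $d_i^P = (p_0^P, p_i^P)$ for $2 \leq i \leq n-2$, and setting $A_i^P = \trCoords(T_i^P)$ for $T_i^P = (p_0^P, p_{i+1}^P, p_{i+2}^P)$. Denoting the common value of both orientations of $d_i^P$ by $x_i^P$ and using the fan-boundary sides $s_0^P, s_{n-1}^P$ to supply $x_1^P = x_{n-1}^P = 1$, formula \eqref{eq:A_Out} applied to $d_i^P$ shows, after dividing by $x_i^P>0$, that $\Out_\alpha(d_i^P) = 0$ becomes the linear tridiagonal recurrence
\[
(A_{i-2}^P + A_{i-1}^P)\, x_i^P \;-\; A_{i-2}^P\, x_{i+1}^P \;-\; A_{i-1}^P\, x_{i-1}^P \;=\; A_{i-2}^P + A_{i-1}^P, \qquad 2 \leq i \leq n-2.
\]
I would then substitute $y_i^P = x_i^P - 1$ to recast this as a discrete Poisson equation with zero Dirichlet data and strictly positive source $A_{i-2}^P + A_{i-1}^P$. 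The coefficient matrix is tridiagonal with strictly positive diagonal, non-positive off-diagonals, vanishing interior row-sums, and strictly positive row-sums in the boundary rows $i=2$ and $i=n-2$; standard $M$-matrix theory then gives that it is invertible with entry-wise strictly positive inverse, so $y_i^P > 0$ and hence $x_i^P > 1$ for every $2 \leq i \leq n-2$. Assembling these polygon-wise solutions defines a global $\alpha \in \A_{\Delta^\prime}$ with $\Out_\alpha(a_k) = 0$ for every $k \in K$ by construction.

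It then remains to verify $\Out_\alpha(b_j) > 0$ for each $j \in J$. The two triangles adjacent to $b_j$ lie in (possibly distinct) polygons of $\Delta$, and applying formula \eqref{eq:A_Out} with $b_j^{\pm} = 1$ yields
\[
\Out_\alpha(b_j) \;=\; A\bigl(c^+ + d^- - 1\bigr) \;+\; A^\prime\bigl(b^+ + a^- - 1\bigr),
\]
where $A, A^\prime$ are the two adjacent triangle parameters and each of $a^-, b^+, c^+, d^-$ is either a side parameter (equal to $1$) or a diagonal parameter (strictly greater than $1$). Both brackets are therefore at least $1$ and both summands strictly positive, so $\Out_\alpha(b_j) > 0$ and hence $\alpha \in X_{\trCoords}$. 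The genuinely delicate step in the full write-up is the $M$-matrix argument giving positivity of the tridiagonal solution; it rests on irreducibility (all off-diagonals are strictly negative since every $A_i^P > 0$) together with strict diagonal dominance in the two boundary rows, which together make the coefficient matrix a non-singular irreducible $M$-matrix.
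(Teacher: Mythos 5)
Your proof is correct, but it takes a genuinely different route from the one in the paper. The paper sets $b_j^{\pm}=1$ as you do, but then breaks the $\pm$ symmetry on the diagonals: it picks $a^{+}_m := 1 + \sum_{x=0}^{m}\epsilon^x$ for a carefully chosen $\epsilon = \mathcal{N}/(\mathcal{N}+\mathcal{M})$ depending on the extremal triangle parameters, solves the vanishing-outitude condition for $a^{-}_m$, and verifies $a^{-}_m>1$ by a chain of elementary inequalities, case-split over $m=0$, interior $m$, and $m=n-4$. Your symmetric ansatz $a^{+}_m = a^{-}_m = x_m$ instead collapses $\Out(a_m)=0$ to a \emph{linear} equation after cancelling the common factor $x_m$ (this cancellation is exactly why the ansatz is consistent), producing a tridiagonal system which, after the shift $y=x-1$, is governed by an irreducibly diagonally dominant $M$-matrix with strictly positive source; positivity of $y$ then follows in one stroke from $M^{-1}>0$. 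I verified that your recurrence $(A_{i-2}+A_{i-1})x_i - A_{i-2}x_{i+1} - A_{i-1}x_{i-1} = A_{i-2}+A_{i-1}$ is the correct specialisation of \eqref{eq:A_Out}, that the row-sum structure (zero interior, $A_1$ and $A_{n-4}$ at the boundary) is as you describe, and that the final verification of $\Out(b_j)>0$ is identical in both arguments. What the paper's route buys is that it avoids citing $M$-matrix theory and produces explicit closed-form coordinates; what your route buys is a shorter, more conceptual argument, a uniform treatment of the boundary cases $m=0$ and $m=n-4$, and a representative point with a pleasingly symmetric $(a^+=a^-)$ structure reminiscent of the hyperbolic slice $\A_\hyp$ from \S\ref{subsec:penner_cell_dec}, though in general it is not hyperbolic since $\trCoords$ is arbitrary and the constraint $A^2=2abc$ is not imposed.
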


\begin{proof}
We will explicitly construct assignments $\eCoordsb$ and $\eCoords$ of positive real numbers to the oriented edges of $S$ so that $(\trCoords, \eCoords, \eCoordsb ) \in \mathring{\mathcal{C}}(\Delta)$. Set $b^+_j = b^-_j = 1$ for all $j \in J$. Fix a polygon $P$ in the complement of the edges of $\Delta$ with $n$ ideal vertices. 

First we will show that there is a choice of edge parameters on the edges inside $P$ which ensures that $\Out(a_k) = 0 $ for all edges $a_k$ strictly contained in $P$. The definition of outitude ensures that the parameters governing the outitude of the edge $a_k$ in $P$ are those assigned to the ideal triangles and oriented edges inside $P$ and on the oriented edges on the boundary of $P$. Therefore we may consider each polygon independently and this part of the proof will suffice to show that $\Out (a_k) = 0$ for all $k \in K$.

We will complete the lemma by showing that the construction in the previous paragraph may be performed in such a manner that $a^\pm_k \geq 1$ for all $k \in K$. Given that $ b^{\pm}_j = 1$ for all $j \in J$ this ensures that $\Out ( b_j ) > 0 $ for all $j \in J$ regardless of the choice of $\trCoords$.

The ideal triangulation of $P$ is standard by assumption. Denote by $V_0$ the ideal vertex of $P$ which abuts every ideal edge strictly contained in $P$. Let $\{ a_m \} $ for $m \in \{ 0, 1, \dots, n-4 \}$ denote the subset of $\{a_k \}_{k \in K}$ comprising edges which are strictly contained in $P$. Order these edges  as depicted in the right-hand image in Figure \ref{fig:Standard_Triangulation2}. Similarly the triangle parameters in $P$ will be denoted $A_m$ for $m \in \{0, 1, \dots, n-3 \}$ with ordering as in Figure \ref{fig:Standard_Triangulation2}. Let $a^+_m$ denote the edge parameter 
oriented away from $V_0$. The condition that $\Out ( a_m ) = 0$ for all $m \in \{ 0, 1, \dots, n-4 \}$ is equivalent to the following equations.
\begin{align*}
a^-_0 &= \frac{ a^+_0 ( A_0 + A_1 ) }{ A_0 (  a^+_0 - a^+_1 ) + A_1 ( a^+_0 - 1 ) } \enspace , \\
a^-_m &= \frac{ a^+_m ( A_m + A_{ m + 1 } ) }{ A_m (  a^+_m - a^+_{ m + 1 } ) + A_{ m + 1 } ( a^+_m -  a^+_{ m - 1 } ) } \quad \text{for } 1 \leq m \leq n-3 \enspace ,  \\
a^-_{n-4} &= \frac{ a^+_{ n - 4 } ( A_{ n - 4 } + A_{ n - 3 } ) }{ A_{ n - 4 } (  a^+_{ n - 4 } - 1 ) + A_{ n - 3 } ( a^+_{ n - 4 } - a^+_{ n - 5 } ) } \enspace .
\end{align*}
It remains to ensure that, subject to these conditions, $a^-_m$ is well-defined and positive. This is equivalent to the following 
inequalities being satisfied.
\begin{align*}
& A_0 ( a^+_0 - a^+_1 ) + A_1 ( a^+_0 - 1 ) > 0  \enspace , \\
& A_m( a^+_m - a^+_{ m + 1 } ) + A_{m+1}( a^+_m - a^+_{ m - 1 } ) > 0 \text{ for } 1 \leq m \leq n-3 \enspace , \\ 
& A_{n-4}( a^+_{ n - 4 } - 1 ) + A_{n-3}( a^+_{ n - 4 } - a^+_{ n - 5 } ) > 0 \enspace .
\end{align*}
Therefore we define $\mathcal{M} := \max \{ A_0, A_1, \dots A_{n-3} \}$,  $\mathcal{N} := \min \{ A_0 , A_1 , \dots A_{ n - 3 } \}$ and $\epsilon := \frac{\mathcal{N}}{\mathcal{N} + \mathcal{M} }$. Now we set
\[
a^+_m := 1 +  \sum_{x=0}^m \epsilon^x \enspace,
\]
{so that $a^+_m > 1 $ for all $m \in \{0, \dots, n-4\}$ by definition. Now we verify that $a^-_m > 1$ for all $m \in \{0, \dots, n-4\}$. We begin with $m=0$. We would like to show that
\begin{align*}
a^-_0 = \frac{2 (A_0 + A_1) }{ - A_0 \epsilon + A_1 } > 1 \enspace .
\end{align*}
If $A_1 - A_0 \epsilon > 0$ then this is clearly true because the statement is equivalent to $2(A_0 + A_1) > A_1 - A_0 \epsilon $ which is immediate since $A_0 \epsilon $, $ A_0 $ and $A_1$ are strictly positive. Therefore it remains to show that
\begin{align*}
A_1 - A_0 \frac{\mathcal{N} }{ \mathcal{N}+ \mathcal{M} } > 0  \\
\iff A_1 (\mathcal{N} + \mathcal{M} ) > A_0 \mathcal{N} \enspace .
\end{align*}
However this is clearly true because the term $A_1 \mathcal{N}$ is strictly positive and we have $A_1 \geq \mathcal{N}$ and $\mathcal{M} \geq A_0$. Therefore we have $a^-_0 > 1$. Now we show that $a^-_m > 1$ for $1 \leq m \leq n-3$. This simplifies to the following,
\begin{align*}
\frac{ ( 1 + \sum_{x=0}^m \epsilon^x) ( A_m + A_{ m + 1 } ) }{ -  A_m  \epsilon^{m+1} + A_{ m + 1 } \epsilon^m  } > 1 \enspace .
\end{align*}
Once again we note that if the denominator is positive then the inequality follows easily as it is reduced to the following,
\begin{align*}
 ( 1 + \sum_{x=0}^m \epsilon^x) ( A_m + A_{ m + 1 } ) & > - A_m \epsilon^{m+1} + A_{ m + 1 } \epsilon^m \\
 \iff  ( 1 + \sum_{x=0}^m \epsilon^x) ( A_m + A_{ m + 1 } ) + A_m \epsilon^{m+1} &> A_{ m + 1 } \epsilon^m \enspace .
\end{align*}
The latter is verified by noting that all terms in the expression are strictly positive and the coefficient of $A_{m+1}$ on the left is larger than that on the right. It remains to show that $ -  A_m  \epsilon^{m+1} + A_{ m + 1 } \epsilon^m  > 0$. As $\epsilon > 0$ this is reduced to $A_{m+1} > A_m \epsilon$, which we may restate as
\[
A_{m+1}(\mathcal{N} + \mathcal{M} ) > A_m \mathcal{N} \enspace .
\]
This statement is clear because all terms are strictly positive and $A_{m+1} \geq \mathcal{N}$ and $\mathcal{M} \geq A_m$. We have shown that $a^-_{m} > 1$ for $1 \leq m \leq n-3$. It remains to consider the case $m = n-4$. We must verify the inequality
\begin{align*}
\frac{ \left( 1 + \sum_{x=0}^{n-4} \epsilon^x \right) ( A_{ n - 4 } + A_{ n - 3 } ) }{ A_{ n - 4 } \sum_{x=0}^{n-4} \epsilon^x + A_{ n - 3 } \epsilon^{n-4} } & > 1 \\
\iff \left( 1 + \sum_{x=0}^{n-4} \epsilon^x \right) \left( A_{ n - 4 } + A_{ n - 3 } \right)  & >  A_{ n - 4 } \sum_{x=0}^{n-4} \epsilon^x + A_{ n - 3 } \epsilon^{n-4} \\
\iff A_{n-4} + \left( 1 + \sum_{x=0}^{n-3} \epsilon^x \right) A_{n-3} & > 0 \enspace .
\end{align*}
The final inequality is clear given that all terms are strictly positive. We have now verified that $a^\pm_m > 1$ for all $m \in \{0, \dots, n-4 \}$}. Performing this procedure for all polygons in $S$ 
we determine that $a^{\pm}_k > 1$ for all $k \in K$. The definition of $a^-_k$ 
is chosen so that under these conditions $\Out(a_k) = 0$ for all $k \in K$. 

Now we will verify that $\Out(b_j) > 0$ for all $j \in J$. Having imposed the condition that $b^{\pm}_j = 1$ for all $j \in J$ the definition of $\Out(b_j)$ simplyfies to
\[
\Out(b_j) = B_0 ( x + y - 1 ) + B_1 ( z + w - 1 ) \enspace ,
\]
where $B_0$ and $B_1$ are the parameters assigned to the triangles of $\Delta^\prime$ whose boundary contains $b_j$ and $x, y, z, w$ are edge parameters in the set $\{ b^{\pm}_j \}_{j \in J} \cup \{ a^{\pm}_k \}_{k \in K}$. We have seen that $a^{\pm}_k > 1$ for all $k \in K$ and $b^{\pm}_j = 1$ for all $j \in J$. Therefore $x, y, z, w \geq 1$ and $\Out ( b_j ) > 0$ for all $j \in J$. We have explicitly constructed an element of $X_{\trCoords}$ as required, completing the proof.
\end{proof}

{Theorem~\ref{thm:main} and Lemma~\ref{lem:surjective} are instructive in how we will proceed to show that $\Cell (\Delta)$ is a topological cell for all cell decompositions $\Delta$ of $S$. Let $X_{\trCoords}$ be as defined in \eqref{eq:algebraic_bd_cell_slice}. Using the notation established in the Lemma~\ref{lem:surjective}, we will prove that $X_{\trCoords}$ is homeomorphic to an open ball by retracting it onto a relatively open neighbourhood in $X_{\trCoords}$ about $\mathbbm{1} \in \partial X_{\trCoords} $. The main challenge will be ensuring that each fiber of the retraction stays in $X_{\trCoords}$. For this purpose we require the following technical lemma. }
\begin{lem}\label{lem:linear_bound}
	Let $B_0, B_1, a^+, b, c, d,  e \in \R_{>0}$. Define the function $a^-: (0, 1] \rightarrow \R$ by
	\[
	t \mapsto a^-(t) := \frac{ ( t a^+ +  1-t ) \big( B_0 ( t e + 1-t)  + B_1( t d + 1-t ) \big)  }{ t \big( B_0 \left( a^+ - b \right) + B_1 \left( a^+ - c \right) \big) } \enspace .
	\]
	
	Assume that $a^- (1) $ is positive and finite. Then the following inequality is true:
	\begin{equation}\label{eq:a0m}
	a^-(t) \geq t \frac{a^+ ( B_0 e + B_1 d ) }{ B_0 ( a^+  - b ) + B_1 ( a^+ - c ) } + 1-t \quad \forall \hspace{2mm} t \in (0, 1] \enspace .
	\end{equation}
\end{lem}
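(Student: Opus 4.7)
The plan is to clear denominators and reduce the inequality to a polynomial inequality in $t$ with manifestly non-negative coefficients.

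First I would observe that the hypothesis that $a^-(1)$ is positive and finite forces the denominator $D := B_0(a^+ - b) + B_1(a^+ - c)$ to be strictly positive, and that $a^-(1) = a^+(B_0 e + B_1 d)/D$. Then the right-hand side of \eqref{eq:a0m} is simply the linear interpolation $t\,a^-(1) + (1-t)\cdot 1$, so the claim reads
\[
\frac{N(t)}{tD} \ \geq\ t\,\frac{N(1)}{D} + (1-t),
\]
where $N(t) := (ta^+ + 1-t)\bigl(B_0(te + 1-t) + B_1(td + 1-t)\bigr)$. Multiplying through by $tD > 0$, this is equivalent to the polynomial inequality
\[
N(t) \ \geq\ t^2 N(1) + t(1-t)D. \tag{$\ast$}
\]

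Next I would expand $N(t)$ by collecting according to the basis $\{t^2,\,t(1-t),\,(1-t)^2\}$. Using $(ta^+ + 1-t)(tx + 1-t) = t^2 a^+ x + t(1-t)(a^+ + x) + (1-t)^2$ with $x = e$ and $x = d$, one obtains
\[
N(t) = t^2 a^+(B_0 e + B_1 d) + t(1-t)\bigl[B_0(a^+ + e) + B_1(a^+ + d)\bigr] + (1-t)^2(B_0 + B_1).
\]
Since the $t^2$-coefficient equals $a^+(B_0 e + B_1 d) = N(1)$, those terms cancel in $(\ast)$. Substituting the explicit form $D = B_0(a^+ - b) + B_1(a^+ - c)$ and simplifying, $(\ast)$ reduces to
\[
t(1-t)\bigl[B_0(e + b) + B_1(d + c)\bigr] + (1-t)^2 (B_0 + B_1) \ \geq\ 0,
\]
which holds trivially for all $t \in (0,1]$ since every factor on the left is non-negative under the positivity assumptions on $B_0, B_1, b, c, d, e$.

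There is essentially no obstacle here: the only subtle point is checking that $D > 0$ so that multiplying through by $tD$ preserves the direction of the inequality; this is what the hypothesis on $a^-(1)$ is for. Everything else is direct polynomial bookkeeping, and the cancellation of the $t^2$ term is what makes the bound linear (rather than quadratic) in $t$, matching the form of the right-hand side of \eqref{eq:a0m}.
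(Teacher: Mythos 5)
Your proof is correct. It is, however, organized differently from the paper's argument, and in a way that is arguably cleaner. The paper keeps $a^-(t)$ as a rational function and rewrites it as a sum of three fractions with common denominator $D := B_0(a^+-b) + B_1(a^+-c)$: the first is exactly $t\,a^+(B_0e+B_1d)/D$, the second is $(1-t)\,a^+(B_0+B_1)/D$, and the third is $\tfrac{1-t}{t}\cdot Q(t)/D$ where $Q(t) = B_0(te+1-t)+B_1(td+1-t)$; it then bounds the second term below by $1-t$ (using $a^+(B_0+B_1) > D$) and discards the third as non-negative. You instead clear the denominator $tD$ (justified by the same observation that $D>0$), expand the numerator $N(t)$ in the basis $\{t^2, t(1-t), (1-t)^2\}$ — the same expansion the authors use in the proof of Lemma~\ref{lem:main1} — and read off that the $t^2$ terms cancel and what remains has manifestly non-negative coefficients. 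The two arguments are algebraically equivalent: multiplying the paper's lower bound $(1-t)\big[\tfrac{a^+(B_0+B_1)-D}{D}+\tfrac{Q(t)}{tD}\big]$ by $tD$ recovers exactly your residual $t(1-t)\big[B_0(e+b)+B_1(d+c)\big]+(1-t)^2(B_0+B_1)$. What your route buys is that the positivity is visible at a glance once the polynomial is expanded, and it unifies this lemma with the method of Lemma~\ref{lem:main1}; the paper's route avoids clearing denominators and keeps the interpolation structure in view.
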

\begin{proof}
	First we show that $a^- ( t ) \in \R_{>0}$. Clearly the numerator in the definition of $a^- ( t ) $ is positive and well-defined for all $t \in (0, 1]$. It remains to show that the denominator $t \left( B_0(a^+ - b) + B_1(a^+ - c ) \right)$ is strictly postive for all $t \in  (0,1]$. However the denominator is a linear function in $t$ which is positive at $t=1$ and zero at $t=0$ therefore it is positive for all $t \in (0, 1]$.
	
	Now we prove the main claim of the Lemma.  We rewrite $a^-(t)$ as 
	\begin{align*}
	a^-(t) = & \quad t \cdot \frac{a^+ ( B_0 e + B_1 d ) }{ B_0( a^+ - b ) + B_1 ( a^+ - c ) } + (1-t) \cdot \left( \frac{ B_0 a^+ + B_1 a^+ }{ B_0 ( a^+ - b ) + B_1 ( a^+ - c ) } \right)  \\
	& + \frac{1-t}{t} \cdot \left( \frac{ B_0( t e + 1-t ) + B_1 ( t d + 1-t ) }{ B_0 (a^+ - b ) + B_1 ( a^+ - c )  } \right) \enspace .
	\end{align*}
	{We have already seen that the denominator of each term on the right hand side is positive. Therefore it is clear that
	\begin{align*}
	\frac{ a^+ ( B_0 + B_1 ) }{ B_0( a^+ - b ) + B_1 ( a^+ - c ) } > 1 \enspace .
	\end{align*}
	Moreover, the facts that $t \in (0, 1]$ and that the terms $B_0, e$ and $d$ are strictly positive ensure that 
	\begin{align*}
	\frac{ (1-t) \left( B_0 ( t e + 1-t ) + B_1 ( t d + 1-t ) \right) }{ t ( B_0 ( a^+ - b ) + B_1 ( a^+ - c ) ) } > 0 \enspace .
	\end{align*}
	Combining these facts we obtain
	\begin{align*}
	a^-(t) > & \quad t \cdot \frac{a^+ ( B_0 e + B_1 d ) }{ B_0( a^+ - b ) + B_1 ( a^+ - c ) } + 1-t  + \frac{1-t}{t} \cdot \left( \frac{ B_0( t e + 1-t ) + B_1 ( t d + 1-t ) }{ B_0 (a^+ - b ) + B_1 ( a^+ - c )  } \right) \quad \\
	> & \quad  t \cdot \frac{a^+ ( B_0 e + B_1 d ) }{ B_0( a^+ - b ) + B_1 ( a^+ - c ) } + 1-t \enspace ,
	\end{align*}
	as required.}
\end{proof}

{
Now, in the manner of Theorem~\ref{thm:main}, before showing that $\mathring{\mathcal{C}}(\Delta)$ is a toplogical cell we will show that it is a fibration over a topological cell such that all fibers are sets of the form $X_{\trCoords}$ defined analogously to the definition in the statement of Lemma~\ref{lem:main1}. We will see that each fiber $X_{\trCoords}$ is a cell of given dimension, regardless of the choice of $\trCoords$. Having showed that $\mathring{\mathcal{C}}(\Delta)$ is a fibration over a contractible set whose fibers are homeomorphic we will complete the result in Theorem~\ref{thm:main2}.}

\begin{lem}\label{lem:main2}
Let $\Delta$ be an ideal cell decomposition of $S$ with ideal edges $\{ b_j \}_{j \in J}$. 
Complete $\Delta$ to an ideal triangulation $\Delta'$ of $S$ such that all polygons in $S$ have a standard triangulation. Denote the edges of $\Delta^\prime \setminus \Delta$ by $\{ a_k \}_{k \in K}$. 
Fix an assignment $\trCoords \in \R^T_{>0}$ of positive real numbers to each ideal triangle of $\Delta^\prime$. 
If $\eCoordsb$ and $\eCoords$ denote assignments of edge parameters to $\{ b_j \}_{j \in J}$ and $\{ a_k \}_{k \in K} $, respectively, then we will understand $(A, \eCoords, \eCoordsb )$ to be an element of $\A$. The set
\[
X_{\trCoords} := \left\{ \left(  \trCoords, \eCoords, \eCoordsb \right) \in \mathcal{A} \mid \Out(b_j) > 0 \hspace{2mm} \forall \hspace{2mm} j \in J \hspace{2mm} \text{and} \hspace{2mm} \Out(a_k) = 0 \hspace{2mm} \forall \hspace{2mm} k \in K \right\}
\]
is homeomorphic to {$\R^{ \mid \underline{E} \mid  -  \mid K \mid  }_{>0}$}.
\end{lem}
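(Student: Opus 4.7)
The strategy parallels the proof of Lemma~\ref{lem:main1}, with Lemma~\ref{lem:linear_bound} playing the role there taken by the direct quadratic expansion of the outitude. Set $d := |\underline{E}| - |K|$.

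The first step is to parametrize $X_{\trCoords}$ by free coordinates. In each polygon $P$ of $S \setminus \Delta$ the standard triangulation has diagonals $a_{m,P}$ emanating from a distinguished vertex $V_{0,P}$; orient them away from $V_{0,P}$ and write the outward parameter as $a^+_{m,P}$. Each equation $\Out(a_{m,P})=0$ solves uniquely for $a^-_{m,P}$ as a rational function of $\trCoords$, the boundary edge parameters $b^\pm_j$, the other $a^+_{m',P}$, and any previously solved $a^-_{m',P}$ (by induction on $m$ along $P$). This yields a continuous injective map
\[
\Phi \colon X_{\trCoords} \to \R^d_{>0}, \qquad (b^\pm_j, a^\pm_{m,P}) \mapsto (b^\pm_j, a^+_{m,P}),
\]
which is a homeomorphism onto its image $U := \Phi(X_{\trCoords})$. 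The set $U$ is open (cut out of $\R^d_{>0}$ by finitely many strict rational inequalities) and non-empty by Lemma~\ref{lem:surjective}.

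Next, I would show that $U$ is star-shaped with respect to the boundary point $\mathbbm{1} \in \overline{U} \setminus U$: for every $p \in U$ and $t \in (0, 1]$, the point $p(t) := tp + (1-t)\mathbbm{1}$ lies in $U$. This requires two checks. (a) \emph{Positivity of $a^-_{m,P}(t)$.} For the base case of the polygonal recursion all inputs to $a^-_{0,P}(t)$ are linearly interpolated and Lemma~\ref{lem:linear_bound} gives $a^-_{0,P}(t) \geq t\, a^-_{0,P}(1) + (1-t) > 0$ directly. For the inductive step, assuming $a^-_{m-1,P}(t) \geq t\, a^-_{m-1,P}(1) + (1-t)$, the monotonic dependence of the solution formula on its inputs combined with Lemma~\ref{lem:linear_bound} at step $m$ delivers the analogous bound for $a^-_{m,P}(t)$. (b) \emph{$\Out(b_j)(t) > 0$.} Every edge variable appearing in $\Out(b_j)$ is either exactly linearly interpolated or bounded below by its linear interpolation by (a), and each such variable enters $\Out(b_j)$ via products with positive triangle and edge coefficients. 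Hence replacing each variable by its linear-interpolation lower bound yields $\Out(b_j)(t) \geq \widehat{\Out}(b_j)(t)$, and expanding $\widehat{\Out}(b_j)(t)$ as in the proof of Lemma~\ref{lem:main1} gives a nonnegative combination $t^2\, \Out(b_j)(1) + t(1-t)\, Q + (1-t)^2 (B_0 + B_1)$ whose leading coefficient is strictly positive.

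To finish, since $U$ is an open, nonempty, semi-algebraic subset of $\R^d$ that is star-shaped with respect to a point in its closure, the radial projection from $\mathbbm{1}$ retracts $U$ onto any relatively open neighbourhood $U \cap B_\epsilon(\mathbbm{1})$; for sufficiently small $\epsilon$ this neighbourhood is homeomorphic to an open $d$-ball by linearising the defining inequalities near $\mathbbm{1}$, and composing with $\Phi^{-1}$ gives $X_{\trCoords} \cong \R^d_{>0}$. I expect the main obstacle to be step~(a): the rational recursion for $a^-_{m,P}(t)$ involves the previously solved $a^-_{m-1,P}(t)$ in its denominator, so one must verify that the sign pattern of the outitude formula makes the propagation of the inductive lower bound compatible with Lemma~\ref{lem:linear_bound}. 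This is precisely why that lemma was stated in the particular form that it was.
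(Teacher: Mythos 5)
Your overall strategy is the same as the paper's: parametrize $X_{\trCoords}$ by the free coordinates $(\eCoords^+,\eCoordsb)$, show the image is star-shaped with respect to $\mathbbm{1}$ via Lemma~\ref{lem:linear_bound}, and retract radially onto a small neighbourhood of $\mathbbm{1}$. However, there is a genuine misunderstanding of how the standard triangulation is being used, and it is precisely the point you flag as ``the main obstacle.'' You assert that solving $\Out(a_{m,P})=0$ for $a^-_{m,P}$ produces a rational function involving ``any previously solved $a^-_{m',P}$,'' and later that ``the rational recursion for $a^-_{m,P}(t)$ involves the previously solved $a^-_{m-1,P}(t)$ in its denominator.'' Neither claim is true. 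Because every diagonal in a standard triangulation emanates from the common vertex $V_0$, the only edges proximal to the two endpoints of $a_m$ are $a_{m-1}, a_{m+1}$ (with orientation away from $V_0$, hence parameters $a^+_{m\pm1}$) and two boundary edges $b^\pm$; thus $\Out(a_m)=0$ expresses $a^-_m$ directly as a rational function of $a^+_{m-1},a^+_m,a^+_{m+1},b^-_{m+1},b^+_{m+2}$ and $\trCoords$, with no other $a^-$ appearing anywhere. There is no recursion and no induction on $m$ to perform: Lemma~\ref{lem:linear_bound} applies independently to each $a^-_m$ with all inputs \emph{exactly} linearly interpolated, which is why that lemma's hypotheses match. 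If you tried to carry out your claimed induction literally, it would either collapse to the direct argument or (if the recursion were real, as in a non-standard subdivision) the monotonicity you gesture at would not obviously propagate the lower bound through a denominator. This is exactly why the lemma restricts to standard subdivisions.

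A smaller issue: in the final step you propose to get a ball ``by linearising the defining inequalities near $\mathbbm{1}$.'' The inequalities $\Out(b_j)>0$ are not linear, and linearising them would produce tangent half-spaces rather than the correct local picture. What actually happens (and what the paper proves) is that inside $B_{\mathbbm{1}}(0.1)$ those inequalities become vacuous once the denominators of the $a^-_m$ formulas are positive, because all free coordinates lie in $(0.9,1.1)$ and one can bound $a^-_m>0.9$. So the only binding constraints near $\mathbbm{1}$ are the finitely many \emph{genuinely linear} denominator inequalities, which cut out an open convex polyhedron $Y_{\trCoords}$, and the retracted set is $B_{\mathbbm{1}}(0.1)\cap Y_{\trCoords}$. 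That is the concrete replacement for your ``linearising'' step.
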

\begin{proof}
We proceed in a similar manner to Theorem \ref{thm:main}. 
As in Lemma~\ref{lem:surjective} (cf. Figure~\ref{fig:Standard_Triangulation2}). we denote by $ b^{\pm}_j$ for $j \in J$ the two edge parameters assigned to the ideal edge $b_j$. We denote by $a^{\pm}_k$ for $k \in K$ the two edge parameters assigned to the ideal edge $a_k$. 

Fix a $n$-gon $P$ in the complement of $\Delta$ and let $V_0$ denote the ideal vertex of $P$ which abuts every ideal edge of $\Delta'$ strictly contained in $P$. 
We denote the set of ideal edges strictly contained in $P$ by $\{ a_m \}$ where $m = 0, 1, \dots, n-4$. 
We denote by $a^+_m$ the edge parameter on $a_m$ 
assigned to the edge oriented away from $V_0$ and $a^-_m$ the edge parameter on $a_m$  
assigned to the edge oriented towards $V_0$. 
Similarly we denote by $b^+_m$ the parameter assigned to the oriented edge from $V_m$ to $V_{m+1}$ and $b^-_m$ the parameter assigned to the oriented edge from $V_{m+1}$ to $V_m$ where indices are read modulo $n$. 
Finally we denote by $A_m$ the triangle parameter assigned to the ideal triangle of $\Delta'$ strictly contained in $P$ whose ideal vertices are $V_0$, $V_m$ and $V_{m+1}$ for $m = 0, 1, \dots, n-3$. 
See Figure \ref{fig:Standard_Triangulation2} for a depiction of the case $n=8$.

\begin{figure}
\centering
\begin{tikzpicture}
\begin{scriptsize}
	\draw (2, 0) edge[red1, thick] (6, 2);
	\draw (2, 0) edge[red1, thick] (6, 4);
	\draw (2, 0) edge[red1, thick] (4, 6);
	\draw (2, 0) edge[red1, thick] (2, 6);
	\draw (2, 0) edge[red1, thick] (0, 4);
	
	\node [circle, fill = red1, inner sep = 0pt, minimum size = 4pt, label = above right : {$A_0$} ] (A0) at (3.6, 0.3) {};
	\node [circle, fill = red1, inner sep = 0pt, minimum size = 4pt, label = above right : {$A_1$} ] (A1) at (4.75, 2) {};
	\node [circle, fill = red1, inner sep = 0pt, minimum size = 4pt, label = above right : {$A_2$} ] (A2) at (4, 3.5) {};
	\node [circle, fill = red1, inner sep = 0pt, minimum size = 4pt, label = above : {$A_3$} ] (A3) at (2.75, 4.5) {};
	\node [circle, fill = red1, inner sep = 0pt, minimum size = 4pt, label = above left: {$A_4$} ] (A4) at (1.25, 3.5) {};
	\node [circle, fill = red1, inner sep = 0pt, minimum size = 4pt, label = above left : {$A_5$} ] (A5) at (0.9, 1.6) {};
	
	\draw (2, 0) -- (4, 0)
	node [circle, fill = black,  inner sep = 0pt, minimum size = 4pt,  label = below: {$V_0$}, pos = 0] () {}
	node [circle, fill = black,  inner sep = 0pt, minimum size = 4pt,  label = below right: {$V_1$}, pos = 1] () {}
	node [circle, fill = blue1,  inner sep = 0pt, minimum size = 4pt,  label = below: {$b^+_0$}, pos = 0.3] () {}
	node [circle, fill = blue1,  inner sep = 0pt, minimum size = 4pt,  label = below: {$b^-_0$}, pos = 0.7] () {};
		
	\draw (4, 0) -- (6, 2)
	node [circle, fill = black,  inner sep = 0pt, minimum size = 4pt,  label = right: {$V_2$}, pos = 1] () {}
	node [circle, fill = blue1,  inner sep = 0pt, minimum size = 4pt,  label = below right: {$b^+_1$}, pos = 0.3] () {}
	node [circle, fill = blue1,  inner sep = 0pt, minimum size = 4pt,  label = below right: {$b^-_1$}, pos = 0.7] () {};
	
	\draw (6, 2) -- (6, 4)
	node [circle, fill = black,  inner sep = 0pt, minimum size = 4pt,  label = right: {$V_3$}, pos = 1] () {}
	node [circle, fill = blue1,  inner sep = 0pt, minimum size = 4pt,  label = right: {$b^+_2$}, pos = 0.3] () {}
	node [circle, fill = blue1,  inner sep = 0pt, minimum size = 4pt,  label = right: {$b^-_2$}, pos = 0.7] () {};

	\draw (6, 4) -- (4, 6)
	node [circle, fill = black,  inner sep = 0pt, minimum size = 4pt,  label = above: {$V_4$}, pos = 1] () {}
	node [circle, fill = blue1,  inner sep = 0pt, minimum size = 4pt,  label = above right: {$b^+_3$}, pos = 0.3] () {}
	node [circle, fill = blue1,  inner sep = 0pt, minimum size = 4pt,  label = above right: {$b^-_3$}, pos = 0.7] () {};
		
	\draw (4, 6) -- (2, 6)
	node [circle, fill = black,  inner sep = 0pt, minimum size = 4pt,  label = above: {$V_5$}, pos = 1] () {}
	node [circle, fill = blue1,  inner sep = 0pt, minimum size = 4pt,  label = above: {$b^+_4$}, pos = 0.3] () {}
	node [circle, fill = blue1,  inner sep = 0pt, minimum size = 4pt,  label = above: {$b^-_4$}, pos = 0.7] () {};
	
	\draw (2, 6) -- (0, 4)
	node [circle, fill = black,  inner sep = 0pt, minimum size = 4pt,  label = above: {$V_6$}, pos = 1] () {}
	node [circle, fill = blue1,  inner sep = 0pt, minimum size = 4pt,  label = above: {$b^+_5$}, pos = 0.3] () {}
	node [circle, fill = blue1,  inner sep = 0pt, minimum size = 4pt,  label = above: {$b^-_5$}, pos = 0.7] () {};
	
	\draw (0, 4) -- (0, 2)
	node [circle, fill = black,  inner sep = 0pt, minimum size = 4pt,  label = left: {$V_7$}, pos = 1] () {}
	node [circle, fill = blue1,  inner sep = 0pt, minimum size = 4pt,  label = left: {$b^+_6$}, pos = 0.3] () {}
	node [circle, fill = blue1,  inner sep = 0pt, minimum size = 4pt,  label = left: {$b^-_6$}, pos = 0.7] () {};
	
	\draw (0, 2) -- (2, 0)
	node [circle, fill = blue1,  inner sep = 0pt, minimum size = 4pt,  label = below left: {$b^+_7$}, pos = 0.3] () {}
	node [circle, fill = blue1,  inner sep = 0pt, minimum size = 4pt,  label = below left: {$b^-_7$}, pos = 0.7] () {};


	\draw[red1, thick] (10, 0) -- (14, 2)
	node [circle, fill = blue1,  inner sep = 0pt, minimum size = 4pt, label = { [black] right: $a^+_0$}, pos = 0.4] () {}
	node [circle, fill = blue1,  inner sep = 0pt, minimum size = 4pt,  label = { [black] above: $a^-_0$}, pos = 0.8] () {};
	
	\draw[red1, thick] (10, 0) -- (14, 4)
	node [circle, fill = blue1,  inner sep = 0pt, minimum size = 4pt, label = { [black] right: $a^+_1$}, pos = 0.4] () {}
	node [circle, fill = blue1,  inner sep = 0pt, minimum size = 4pt,  label = { [black] above: $a^-_1$}, pos = 0.8] () {};

	\draw[red1, thick] (10, 0) -- (12, 6)
	node [circle, fill = blue1,  inner sep = 0pt, minimum size = 4pt, label = { [black] right: $a^+_2$}, pos = 0.4] () {}
	node [circle, fill = blue1,  inner sep = 0pt, minimum size = 4pt,  label = { [black] right: $a^-_2$}, pos = 0.8] () {};
	
	\draw[red1, thick] (10, 0) -- (10, 6)
	node [circle, fill = blue1,  inner sep = 0pt, minimum size = 4pt, label = { [black] left: $a^+_3$}, pos = 0.4] () {}
	node [circle, fill = blue1,  inner sep = 0pt, minimum size = 4pt,  label = { [black] left: $a^-_3$}, pos = 0.8] () {};

	\draw[red1, thick] (10, 0) -- (8, 4)
	node [circle, fill = blue1,  inner sep = 0pt, minimum size = 4pt, label = { [black] left: $a^+_4$}, pos = 0.4] () {}
	node [circle, fill = blue1,  inner sep = 0pt, minimum size = 4pt,  label = { [black] right: $a^-_4$}, pos = 0.8] () {};
	
	\draw (10, 0) -- (12, 0)
	node [circle, fill = black,  inner sep = 0pt, minimum size = 4pt,  label = below: {$V_0$}, pos = 0] () {};
	
	\draw (12, 0) -- (14, 2)
	node [circle, fill = black,  inner sep = 0pt, minimum size = 4pt,  label = below: {$V_1$}, pos = 0] () {};
	
	\draw (14, 2) -- (14, 4)
	node [circle, fill = black,  inner sep = 0pt, minimum size = 4pt,  label = right: {$V_2$}, pos = 0] () {};
	
	\draw (14, 4) -- (12, 6)
	node [circle, fill = black,  inner sep = 0pt, minimum size = 4pt,  label = right: {$V_3$}, pos = 0] () {};
	
	\draw (12, 6) -- (10, 6)
	node [circle, fill = black,  inner sep = 0pt, minimum size = 4pt,  label = above: {$V_4$}, pos = 0] () {};
	
	\draw (10, 6) -- (8, 4)
	node [circle, fill = black,  inner sep = 0pt, minimum size = 4pt,  label = above: {$V_5$}, pos = 0] () {};
	
	\draw (8, 4) -- (8, 2)
	node [circle, fill = black,  inner sep = 0pt, minimum size = 4pt,  label = left: {$V_6$}, pos = 0] () {};
	
	\draw (8, 2) -- (10, 0)
	node [circle, fill = black,  inner sep = 0pt, minimum size = 4pt,  label = left: {$V_7$}, pos = 0] () {};
\end{scriptsize}	
\end{tikzpicture}
		\caption{{Labelling of triangle and oriented edge parameters of a standard triangulation in a polygon containing $6$ ideal triangles. Some of the ideal vertices $V_i$ and edges $b_j$ may be identified in $S$.} }
\label{fig:Standard_Triangulation2}
\end{figure}
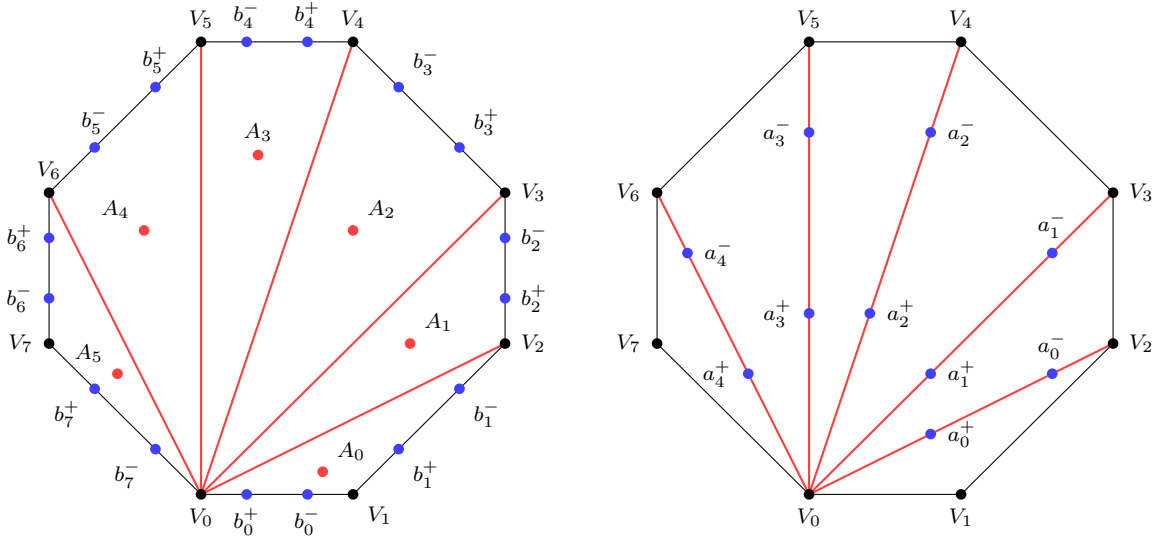

In order to show that $X_{\trCoords}$ is homeomorphic to $\R^{ \mid \underline{E} \mid - \mid K \mid }$ we will explicitly construct a homeomorphism from $X_{\trCoords}$ to a convex open subset of the ball $B_{\mathbbm{1} }(0.1) \subset \R^{ \mid \underline{E} \mid - \mid K \mid }$ of Euclidean radius $0.1$ and center $\mathbbm{1} := (1, 1, \dots, 1)$. 
An essential ingredient in this homeomorphism will be the linear deformation of some coordinates  to $1$ by the function
\begin{align*}
\phi : \R_{>0} \times (0, 1] & \rightarrow \R_{>0} \enspace , \\
 ( x , t)  & \mapsto t x + 1-t \enspace .
\end{align*}
We note here that the domain of $\phi$ is $\R_{>0} \times (0, 1]$ as opposed to $\R_{>0} \times [0,1]$ because, as we will see, the point $\mathbbm{1}$ is an element of $ \partial X_{\trCoords}$ rather than $X_{\trCoords}$ itself. Therefore there exists an open subset $U \subset B_{\mathbbm{1} }(0.1)$ onto which we can perform an isotopy of $X_{\trCoords}$.

{We hope to glue the maps $\phi$ together to construct an isotopy $\Phi: X_{\trCoords} \times (0, 1] \rightarrow U \subset B_{\mathbbm1}(0.1)$.} The difficulty here {which was not present in the proof of Theorem~\ref{thm:main}} is that we cannot apply this linear deformation to all edge parameters simultaneously if we want to guarantee 
that $\Phi( x, t ) \in X_{\trCoords}$ for all $t \in (0, 1]$.
It is true however that $\Out(a_k) = 0$ for all $\overline{a} \in X_{\trCoords}$. We may therefore impose these equalities to express the interior edge parameters $\eCoords^-$ which are 
assigned to edges oriented towards $V_0$ in terms of the parameters $\eCoords^+$ which are 
assigned to edges oriented away from $V_0$ and the edge parameters $\overline{b}^\pm_j$ of $\Delta$.
That is, while deforming all the edge parameters $\eCoordsb$ and $\eCoords^+$ linearly via $\phi$ we will deform the $n-4$ values of $\eCoords^-$ inside the $n$-gon $P$ via functions $\phi_m^P : \mathcal{A} \times (0, 1] \rightarrow \R_{>0}$ given by
\begin{align*}
a_0^-(t) = \phi^P_0 ( \trCoords , \eCoords^+, \eCoordsb , t ) & =  \frac{ \phi ( a^+_0,  t )  \left( A_0 \phi ( b^+_2,  t ) + A_1 \phi ( b^-_1,  t ) \right ) }{ A_0 \left( \phi ( a^+_0 ,  t ) - \phi ( a^+_1 ,  t ) \right) + A_1 \left( \phi ( a^+_0 ,  t ) - \phi ( b^+_0 , t ) \right) } \enspace , \\
a_m^-(t) = \phi^P_m ( \trCoords, \eCoords^+, \eCoordsb , t ) & = \frac{ \phi( a^+_m , t ) \left( A_m \phi( b^+_{m+2} , t ) + A_{m+1} \phi( b^-_{m+1} , t ) \right) }{ A_m \left( \phi ( a^+_m , t ) -  \phi( a_{m+1}^+ ,  t ) \right) + A_{m+1} \left( \phi( a^+_m , t ) -  \phi( a_{m-1}^+ , t ) \right) } \enspace , \\
a_{n-4}^- = \phi^P_{n-4} ( \trCoords, \eCoords^+ , \eCoordsb , t ) & = \frac{ \phi( a^+_{n-4},  t ) \left( A_{ n - 4 } \phi ( b^+_{ n - 2 } ,  t ) + A_{ n - 3 } \phi ( b^-_{ n - 3 } ,  t ) \right)  }{ A_{ n - 4 } \left( \phi ( a^+_{ n - 4 } ,  t ) - \phi( b^-_{ n - 1 } ,  t )  \right) + A_{ n - 3 } \left( \phi( a^+_{ n - 4 } , t ) - \phi( a^+_{ n - 5 } , t ) \right) } \enspace ,
\end{align*} 
where $m = 1, \dots, n-5$ in the case where $P$ is an $n$-gon with $n>4$. 
If $n=4$ we deform $a_0^-$ by the function
\begin{align*}
a_0^- = \phi^P_0 \left( \trCoords, \eCoords^+, \eCoordsb ,  t \right)  & = \frac{ \phi ( a^+_0 ,  t ) \left( A_0 \phi( b^+_2 , t ) + A_1 \phi( b^-_1 , t ) \right) }{ A_0 \left( \phi( a^+_0 , t ) - \phi (  b^-_3 , t )  \right) + A_1 \left( \phi ( a^+_0 , t ) - \phi ( b^+_0 , t ) \right) } \enspace .
\end{align*}
{The functions $\phi^P_m$ are defined to ensure that $\Out( a_m ) = 0$ for all $t \in (0, 1]$.} Let us consider the projection
\begin{align*}
{\pi_{\trCoords} } : X_{\trCoords} \rightarrow {\R^{ \mid \underline{E} \mid - \mid K \mid }_{>0} } \\
 ( \trCoords, \eCoords, \eCoordsb ) \mapsto ( \eCoords^+, \eCoordsb ) \enspace .
\end{align*}
This map is invertible precisely because we consider only the case in which the triangle parameters are given by $\trCoords$ and we can recover the edge parameters $\eCoords^-$ inside a polygon $P$ by solving the equation $\Out(a_m) = 0$ for $a^-_m$. { Since the function by which we recover $\overline{a}^-$ is continuous, it follows immediately that $\pi_{\trCoords}^{-1}$ is continuous. We note that $\pi_{\trCoords}$ is continuous because it is a projection.} Thus $\pi_{\trCoords}$ is a homeomorphism onto its image and it suffices to show that $\pi_{\trCoords} (X_{\trCoords}) $ is homeomorphic to $\R^{ \mid \underline{E} \mid -  \mid K \mid }_{>0}$.
We will do this in the following steps:
\begin{enumerate}
	\item First we show that if $( \eCoords^+, \eCoordsb ) \in \pi_{\trCoords} ( X_{\trCoords} ) $ then each point on the interval from $ ( \eCoords^+, \eCoordsb )$ to $ \mathbbm{1} \in { \R^{  \mid \underline{E} \mid  -  \mid K \mid }_{>0} }$ other than $\mathbbm{1}$ itself is also contained in $\pi_{\trCoords}(X_{\trCoords})$.
	\item {We} use the latter property to {show} that $\pi_{\trCoords}(X_{\trCoords})$ is homeomorphic to an open convex set, and as such {is} homeomorphic to {$\R^{ \mid \underline{E} \mid  - \mid K \mid }_{>0} $}.
\end{enumerate}
\emph{Proof of claim} $(1)$: Let $\mathbbm{1} \in {\R^{ \mid \underline{E} \mid  - \mid K \mid }_{>0} }$ denote the vector $(1, 1, \dots, 1)$. 
Fix a point $(\eCoords^+, \eCoordsb) \in \pi_{\trCoords}(X_{\trCoords}) $ where we denote 
$\eCoordsb = \left( b^+_0, b^-_0, b^+_1, \dots, b^+_{|J| - 1}, b^-_{|J| - 1} \right) \in \R^{ 2|J| }_{>0}$ and $\eCoords^+ = \left( a^+_0, a^+_1, \dots, a^+_{ | K | - 1 } \right) \in \R^{ |K| }_{>0}$. 
{ The values taken } along the linear deformation {in $\R^{ \mid \underline{E} \mid - \mid K \mid}_{>0}$ from $(\overline{a}^+, \overline{b} )$} towards $\mathbbm{1}$ {are}
\begin{align*}
\eCoordsb(t) & := \left( \phi ( b^+_0, t), \phi( b^-_0, t) , \phi( b^+_1, t), \dots, \phi( b^+_{|J| - 1}, t ) , \phi( b^-_{ J| - 1}, t) \right) \in \R^{ 2|J| }_{>0} \enspace , \\
\eCoords^+ (t) &:= \left(  \phi ( a^+_0, t) ,\phi( a^+_1, t) , \dots, \phi( a^+_{ | K | - 1 }, t) \right) \in \R^{ | K | }_{>0} \enspace .
\end{align*}
We must show that $( \eCoords^+ ( t ) , \eCoordsb ( t ) ) \in \pi_{\trCoords} ( X_{\trCoords} ) $ for all $ t \in (0, 1]$. 
In order for $( \eCoords^+ ( t ) , \eCoordsb ( t ) ) \in \pi_{\trCoords} ( X_{\trCoords} ) $ we must be able to recover an assignment of edge parameters 
{ $a^-$ such that $(\trCoords, \overline{a}, \overline{b} ) \in X_{\trCoords} $ }.  
If such an assignment exists it is uniquely determined by the condition that $\Out( a_m ) = 0$. 
This is because the standard triangulation is chosen in such a manner that, { having fixed $\trCoords$, $\eCoords^+ ( t ) $ and $\eCoordsb ( t ) $, the function} $\Out ( a_m ) $ is 
a linear function of $a^-_m$.  
We need only to ensure that the value for $a^-_m$ thus obtained is finite and positive.

Fix $t_0 \in (0, 1]$. By construction the value of $a^-_m$ determined by $ ( \eCoords^+(t_0), \eCoordsb (t_0) ) $ is 
 $a_m^-=\phi^P_m ( \trCoords, \eCoords^+, \eCoordsb, t_0 ) $. 
First suppose $n > 4$. By 
 definition of $\phi^P_m$ the claim that
  $a_m^-$  is positive for all $m$  is equivalent to  satisfying each of the inequalities
\begin{align*}
		A_0 \left( \phi ( a^+_0 ,  t_0 ) - \phi ( a^+_1 ,  t_0 ) \right) + A_1 \left( \phi ( a^+_0 ,  t_0 ) - \phi ( b^+_0 , t_0 ) \right) > 0 & \enspace , \\
		A_m \left( \phi ( a^+_m , t_0 ) -  \phi( a_{ m + 1 }^+ ,  t_0 ) \right) + A_{ m + 1 } \left( \phi( a^+_m , t_0 ) -  \phi( a_{ m - 1 }^+ , t_0 ) \right) > 0 & \enspace , \hspace{2mm} 1 \leq m \leq n-5 \enspace , \\
		A_{ n - 4 } \left( \phi ( a^+_{ n - 4 },  t_0 ) - \phi( b^-_{ n - 1 } ,  t_0 )  \right) + A_{ n - 3 } \left( \phi( a^+_{ n - 4 } , t_0 ) - \phi( a^+_{ n - 5 } , t_0 ) \right) > 0 & \enspace .
\end{align*}
Using the definition of $\phi ( x , t_0 ) $ these are equivalent to
\begin{align*}
		t_0 \left( A_0 \left( a^+_0 - a^+_1 \right) + A_1 \left( a^+_0  -  b^+_0 \right) \right) > 0 & \enspace ,\\
		t_0 \left( A_m \left( a^+_m -  a_{ m + 1 }^+ \right) + A_{ m + 1 } \left( a^+_m -   a^+_{ m - 1 } \right) \right) > 0 & \enspace , \hspace{2mm} 1 \leq m \leq n-5 \enspace , \\
		t_0 \left( A_{ n - 4 } \left( a^+_{ n - 4 } -  b^-_{ n - 1 } \right) + A_{ n - 3 } \left( a^+_{ n - 4 } - a^+_{ n - 5 } \right) \right) > 0 & \enspace .
\end{align*}
Modulo the positive factor $t_0$ the latter inequalities are necessary conditions for $(\eCoords^+, \eCoordsb ) \in \pi_{\trCoords} ( X_{\trCoords} )$ which is true by assumption. The case 
in which $n=4$ is completely analogous. 

We have shown that if $( \eCoords^+, \eCoordsb ) \in \pi_{\trCoords} ( X_{\trCoords} )$ then each point $( \eCoords^+ ( t ) , \eCoordsb(t) ) \in \R^{ \mid \underline{E} \mid  -  \mid K \mid }_{>0} $ on the line from $( \eCoords^+, \eCoordsb ) $ to $\mathbbm{1}$ admits an assignment of 
{ an edge parameter} $a^-_m (t) $ to each of the oriented edges 
{$a_m$} of $P$ for $m = 0, 1 \dots n-4$ which ensure that $\Out( a_m ) = 0$ for all $m = 0, 1, \dots n-4$ and for all $t \in (0, 1]$. 
We can make this choice independently for each polygon $P$ so that we have a well-defined candidate vector $ ( \trCoords ,  \eCoords (t) , \eCoordsb (t) ) \in \mathcal{A}$ such that $\pi_{\trCoords}( (\trCoords, \eCoords (t) , \eCoordsb (t) ) ) =  (\eCoords^+ (t) , \eCoordsb (t) ) $. 
It remains to be proved that $( \trCoords, \eCoords(t) , \eCoordsb(t) ) \in X_{\trCoords}$. 
In particular, we must show that $( \trCoords, \eCoords (t) , \eCoordsb(t) ) $ satisfies $\Out(b_j) > 0$ for all $j \in J$.

Fix $j \in J$ and let $B_0$ and $B_1$ be the triangle parameters in $\trCoords$ assigned to ideal triangles of $\Delta^\prime$ which are adjacent to the edge $b_j$. 
Fix $t_0 \in (0, 1]$. 
We seek to 
{verify} the inequality
\begin{align}
\Out ( b_j ) = & B_0 \left( \phi( b^+_j, t_0 ) \psi_0( x_0 , t_0 )+ \phi( b^-_j, t_0 ) \psi_1(x_1, t_0 )  - \phi( b^+_j, t_0 ) \phi ( b^-_j, t_0 ) \right) \label{eq:outitude_deformation1} \\
& + B_1 \left( \phi( b^+_j, t_0 ) \psi_2 ( x_2, t_0 ) + \phi(b^-_j, t_0 ) \psi_3(x_3, t_0 ) - \phi( b^+_j , t_0 ) \phi( b^-_j , t_0 ) \right) > 0 \enspace . \label{eq:outitude_deformation2}
\end{align}
where $x_i \in \{ a^{\pm}_k \}_{k \in K} \cup \{ b^{\pm}_j \}_{j \in J}$ and $\psi_i$ is defined as 
\begin{align*}
\psi_i (x_i, t_0 ) &= \phi^P_m ( x_i, t_0 ) \enspace \text{ if } x_i = a^-_m \text{ for some polygon } P  \text{ and} \enspace , \\
\psi_i (x_i, t_0 ) &= \phi ( x_i, t_0 ) \enspace \text{ otherwise. }
\end{align*}
Lemma \ref{lem:linear_bound} shows that $\phi^P_k (x, t_0 ) \geq \phi(x, t_0 )$ for all $k \in K$. { Now we may replace the terms $\psi_i$ with $\phi_i$ or $\phi_i^P$ and the inequality on lines~\eqref{eq:outitude_deformation1}-\eqref{eq:outitude_deformation2} is now of the form} 
\begin{align*}
\Out ( b_j ) = B_0 & \left( \phi( b^+_j, t_0 ) \phi ( x_0 , t_0 )+ \phi( b^-_j, t_0 ) \phi (x_1, t_0 )  - \phi( b^+_j, t_0 ) \phi ( b^-_j, t_0 ) \right) \\
& + B_1 \left( \phi( b^+_j, t_0 ) \phi (x_2, t_0 ) + \phi(b^-_j, t_0 ) \phi (x_3, t_0 ) - \phi( b^+_j , t_0 ) \phi( b^-_j , t_0 ) \right) > 0 \enspace .
\end{align*}
Ensuring that this inequality is true is the content of Lemma \ref{lem:main1} given the assumption that $( \trCoords, \eCoords, \eCoordsb ) \in X_{\trCoords} $. 
We have shown that if $( \eCoords^+, \eCoordsb ) \in \pi_{\trCoords} ( X_{\trCoords} )$ then $( \eCoords^+ ( t ) , \eCoordsb (t) ) \in \pi_{\trCoords} ( X_{\trCoords} )$ for all $ t \in (0, 1]$. 

\emph{Proof of claim} $(2)$: 
Let $r$ be a ray or maximal open interval in $\pi_{\trCoords} ( X_{\trCoords} )$ whose closure contains $\mathbbm{1}$. Let $B_{\mathbbm{1}}(0.1)$ denote the ball in {$\R^{ \mid \underline{E} \mid -  \mid K \mid }_{>0}$} of Euclidean radius $0.1$ and center $\mathbbm{1}$. We have 
seen {in part (1) } that $\pi_{\trCoords} ( X_{\trCoords} )$ is stratified by 
{rays of this form and that } 
$r$ contains a unique open interval $\iota_r$ with one endpoint at $\mathbbm{1}$ and the other endpoint in $\partial B_{\mathbbm{1}}(0.1)$. 
One may construct  an isotopy 
\begin{align*}
\xi_r : r \cup \{ \mathbbm{1} \} \times [0, 1] \rightarrow \overline{r} \cup \{ \mathbbm{1} \}
\end{align*}
such that $\xi_r (r, 1) = \iota_r$ and $\xi_r(\mathbbm{1}, t) = \mathbbm{1}$ for all $t \in [0, 1]$.
 The set of all rays of maximal open intervals in $\pi_{\trCoords}(X_{\trCoords})$ whose closure contains $\mathbbm{1}$ is pairwise disjoint and 
$\pi_{\trCoords}(X_{\trCoords})$ is, by definition, a semi-algebraic set. Therefore these isotopies may be glued together to  
produce an isotopy
\begin{align*}
\xi : \{ \pi_{\trCoords}(X_{\trCoords} ) \cup \{ \mathbbm{1} \} \} \times [ 0, 1] \rightarrow \pi_{\trCoords}(X_{\trCoords} ) \cup \{ \mathbbm{1} \} \enspace ,
\end{align*} 
such that $\xi \mid_{r \cup \{ \infty \} } = \xi_r$. This shows that $\pi_{\trCoords}(X_{\trCoords} )$ is homeomorphic to $\xi( \pi_{\trCoords}(X_{\trCoords}) , 1 )=:U_{\Delta}$.

We 
conclude this lemma by showing that $U_{\Delta}$ is an open convex subset of $ B_{ \mathbbm{1} }(0.1)$.
Let $ ( \eCoords^+, \eCoordsb ) \in B_{\mathbbm{1}}(0.1)$. We wish to devise a set of { necessary and sufficient} conditions which ensure that $( \eCoords^+, \eCoordsb ) \in U_{\Delta}$. If $(\eCoords^+ , \eCoordsb ) \in \pi_{\trCoords} ( X_{\trCoords} )$ and $P$ is a polygon with four vertices we define $a^-_0$ as
\begin{align} \label{def:ai0}
a^-_0 := \frac{ a^+_0 \left( A_0 b^+_2 + A_1 b^-_1 \right) }{ A_0 \left( a^+_0 - b^-_3  \right) + A_1 \left( a^+_0 - b^+_0 \right) } \enspace .
\end{align}
For each polygon $P$ with more than four ideal vertices we define $a^-_m$ for $m = 0, 1, \dots, n-4$ as
\begin{align}
a^-_0 &:= \frac{ a^+_0  \left( A_0 b^+_2 + A_1 b^-_1 \right) }{ A_0 \left( a^+_0 - a^+_1 \right) + A_1 \left( a^+_0 - b^+_0 \right) } \enspace , \\
a^-_m &:= \frac{ a^+_m \left( A_m b^+_{ m + 2 }+ A_{ m + 1 } b^-_{ m + 1 } \right) }{ A_m \left( a^+_m -  a_{ m + 1 }^+  \right) + A_{ m + 1 } \left( a^+_m - a_{ m - 1 }^+ \right) } \hspace{2mm}, \enspace 1 \leq m \leq n-5 \enspace , \\ 
\label{def:ain} a^-_{ n - 4 } &:= \frac{a^+_{ n - 4 } \left( A_{ n - 4 } b^+_{ n - 2 } + A_{ n - 3 } b^-_{ n - 3 } \right)  }{ A_{ n - 4 } \left(  a^+_{ n - 4 } - b^-_{ n - 1 } \right) + A_{ n - 3 } \left( a^+_{ n - 4 } -  a^+_{ n - 5 } \right) } \enspace .
\end{align}
{Having fixed $\trCoords$, $\eCoords^+$ and $\eCoordsb$, these are the unique values which ensure that $\Out(a_m) = 0$ for all $0 \leq m \leq n-4$.} Using these definitions, further define $\eCoords := ( a^+_0, a^-_0, \dots, a^+_{ |K| - 1 }, a^-_{ |K| - 1 } ) \in \R^{ \mid \underline{E} \mid  - \mid K \mid }_{>0}$. 
To ensure that each $a^-_k$ 
is well-defined and positive we require that for a polygon $P$ with four ideal vertices 
the following inequality holds
\begin{align} \label{eq:ineq1}
 A_0 \left( a^+_0 - b^-_3  \right) + A_1 \left( a^+_0 - b^+_0 \right) > 0 \enspace.
\end{align}
If $P$ is a polygon with more than $n > 4$ vertices we require
\begin{align}
 A_0 \left( a^+_0 - a^+_1 \right) + A_1 \left( a^+_0 - b^+_0 \right) & > 0 \enspace , \\
 A_m \left( a^+_m - a_{ m + 1 }^+ \right) + A_{ m + 1 } \left( a^+_m - a_{ m - 1 }^+ \right) & > 0 \enspace , \enspace 1 \leq m \leq n-5 \enspace , \\
\label{eq:ineq2} A_{ n - 4 } \left( a^+_{ n - 4 } - b^-_{ n - 1 } \right) + A_{ n - 3 } \left( a^+_{ n - 4 } - a^+_{ n - 5 } \right) & > 0 \enspace .
\end{align}
Denote by $Y_{\trCoords} \subset {\R^{ \mid \underline{E} \mid - \mid K \mid }_{>0} }$ the subset satisfying conditions (\ref{eq:ineq1}) - (\ref{eq:ineq2}).
Recall that we have fixed $\trCoords$ so $Y_{\trCoords}$ is 
 the intersection of finitely many open half spaces and is therefore the interior of a convex polyhedron.
 By definition of $U_{\Delta}$ and $Y_{\trCoords}$ we have $U_{\Delta} \subseteq B_{\mathbbm{1}}(0.1) \cap Y_{\trCoords}$. We will show that $U_{\Delta} = B_{\mathbbm{1}}(0.1) \cap Y_{\trCoords}$.

Let $( \eCoords^+, \eCoordsb) \in B_{\mathbbm{1} } (0.1) \cap Y_{\trCoords}$. We have shown above that there is a unique candidate in $\mathcal{A}$ for the value of $\pi_{\trCoords}^{-1} (\eCoords^+, \eCoordsb)$ which is obtained by assigning to each $a^-_k$ the 
value determined by equations \eqref{def:ai0} - \eqref{def:ain}. Denote this candidate by $({\trCoords}, \eCoords, \eCoordsb )$. 
{ The values $a^-_k$ are chosen so that $(\trCoords, \eCoords, \eCoordsb ) $ } satisfies 
$\Out(a_k) = 0$ for all $k \in K$. 
To see that $(\eCoords^+, \eCoordsb) \in U_{\Delta}$ it remains only to show that $({\trCoords}, \eCoords, \eCoordsb ) $
also satisfies $\Out(b_j) > 0$ for all $j \in J$. First note that $0.9 < b^{\pm}_j, a^+_k < 1.1$ for all $j \in J$ and $k \in K$ { since $(\eCoords^+, \eCoordsb) \in B_{\mathbbm{1}}(0.1)$}. 
It follows from definitions (\ref{def:ai0}) - (\ref{def:ain}) that 
\[
a^-_k > \min \left\{ a^+_0, a^+_1, \dots, a^+_{|K|-1}, b^+_0, b^-_0, \dots, b^+_{|J|-1}, b^-_{|J|-1} \right\} > 0.9 \enspace , \quad \forall \; k \in K \enspace .
\]
 For example, in the case $n=4$ and $k=0$ we note that the denominator in the definition of $a^-_0$ is positive because $(\eCoords^+, \eCoordsb)$ satisfies Inequality~\eqref{eq:ineq1} by assumption. Therefore we have
\begin{align*}
a^-_0 = \frac{a_0^+ \left( A_0 b^+_2 + A_1 b^-_1 \right)}{ A_0(a^+_0 - b^-_3) + A_1 ( a^+_0 - b^+_0) } > \frac{A_0 b^+_2 + A_1 b^-_1}{ A_0 + A_1 } \geq \min\{ b^+_2, b^-_1 \} \enspace .
\end{align*}
The other cases are analogous. Therefore $(\trCoords, \eCoords, \eCoordsb )$ satisfies $\Out(b_j) > 0$ for all $j \in J$. 
We have shown that every point in $B_{\mathbbm{1}}(0.1) \cap Y_{\trCoords}$ is also in $\pi_{\trCoords} (X_{\trCoords})$. Therefore $B_{\mathbbm{1}}(0.1) \cap Y_A \subseteq \pi_{\trCoords} (X_{\trCoords}) \cap B_{\mathbbm{1}}(0.1) = U_{\Delta}$. 
In particular $U_{\Delta} = B_{\mathbbm{1}}(0.1) \cap Y_{\trCoords}$ as required.

We have not yet verified that $U_\Delta$ is nonempty. Recall that we have shown in Lemma \ref{lem:surjective} that $X_{\trCoords} \neq \emptyset$. 
Hence also $\pi_{\trCoords}(X_{\trCoords})$ is nonempty and for every point $( \eCoords^+, \eCoordsb ) \in \pi_{\trCoords}( X_{\trCoords} )$, the interval $ ( \eCoords^+ ( t ) , \eCoordsb ( t ) ) $ enters $B_{\mathbbm{1}}(0.1) \cap Y_{\trCoords}$ as $t \rightarrow 0$. 
Therefore $U_{\Delta}$ is a nonempty finite intersection of convex {open} subsets of {$\R^{ \mid \underline{E} \mid  - \mid  K \mid  }_{>0}$}. 
In particular $U_{\Delta}$ is itself a nonempty convex {open} subset of {$\R^{ \mid \underline{E} \mid - \mid K \mid }_{>0}$} and $U_{\Delta'} \cong {\R^{ \mid \underline{E} \mid  - \mid K \mid }_{>0} } $. In summary we have verified the following series of homeomorphisms,
\[
{\R^{ \mid \underline{E} \mid - \mid K \mid }_{>0} } \cong U_{\Delta} \cong \pi_{\trCoords} (X_{\trCoords} ) \cong X_{\trCoords} \enspace .
\]
This completes the required result.
\end{proof}

{We use Lemma~\ref{lem:main2} in the proof of Theorem~\ref{thm:main2} in a manner analogous to that in which Lemma~\ref{lem:main1} is used to prove Theorem~\ref{thm:main}. Having showed that $X_{\trCoords} \cong \R^{ \mid \underline{E} \mid - \mid K \mid }_{>0}$ for all $\trCoords \in \R^T_{>0}$ we will show that the projection
\begin{align}
\pi: \mathring{\mathcal{C}} ( \Delta) &\rightarrow \R^T_{>0}  \label{eq:projection_main1} \\
(\trCoords, \eCoords, \eCoordsb) & \mapsto \trCoords \enspace , \label{eq:projection_main2}
\end{align}
is a surjective globally trivial fibration. Having proved in Lemma~\ref{lem:main2} that the fibers are all homeomorphic to $\R^{ \mid \underline{E} \mid - \mid K \mid }_{>0}$ the result is immediate. }

\begin{thm}\label{thm:main2}
Let $\Delta$ be an ideal cell decomposition of $S$.
We have a homeomorphism $\Cell (\Delta ) \cong {\R^{ \mid T \mid  + \mid \underline{E} \mid - \mid K \mid }_{>0} }$.
\end{thm}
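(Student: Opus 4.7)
The plan is to follow the strategy suggested in the paragraph just before the statement and realise $\mathring{\mathcal{C}}(\Delta)$ as the total space of a globally trivial fibration over $\R^T_{>0}$ with fibre $\R^{|\underline{E}|-|K|}_{>0}$, from which the theorem is immediate. First I would fix an ideal triangulation $\Delta'\supset\Delta$ in which every complementary polygon carries a standard triangulation, as in Lemma~\ref{lem:surjective}, and identify $\mathring{\mathcal{C}}(\Delta)$ with its image in $\A_{\Delta'}$ via $\psi_{\Delta'}$; the independence of this image from the choice of $\Delta'$ (modulo the transition maps $R_{\Delta',\Delta''}$) allows us to work in this one chart throughout.

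Next I would consider the projection $\pi:\mathring{\mathcal{C}}(\Delta)\to\R^T_{>0}$ defined by $(\trCoords,\eCoords,\eCoordsb)\mapsto\trCoords$. Lemma~\ref{lem:surjective} shows $\pi$ is surjective, and for each $\trCoords\in\R^T_{>0}$ the fibre $\pi^{-1}(\trCoords)$ is exactly the slice $X_{\trCoords}$ studied in Lemma~\ref{lem:main2}, hence homeomorphic to $\R^{|\underline{E}|-|K|}_{>0}$. The point is now to assemble the per-fibre homeomorphisms of Lemma~\ref{lem:main2} into a single global homeomorphism
\[
F:\mathring{\mathcal{C}}(\Delta)\longrightarrow \R^T_{>0}\times \R^{|\underline{E}|-|K|}_{>0}.
\]

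For this I would trace through the proof of Lemma~\ref{lem:main2} and observe that the homeomorphism $X_{\trCoords}\cong U_{\Delta}(\trCoords)\subset B_{\mathbbm{1}}(0.1)$ built there factors as (i) the projection $\pi_{\trCoords}$ onto the coordinates $(\eCoords^+,\eCoordsb)$, whose inverse reads off each $a^-_m$ from the closed-form expressions \eqref{def:ai0}--\eqref{def:ain} that are rational in $(\trCoords,\eCoords^+,\eCoordsb)$, and (ii) the radial retraction $\xi$ of the star-shaped set $\pi_{\trCoords}(X_{\trCoords})\cup\{\mathbbm{1}\}$ onto the open convex set $U_{\Delta}(\trCoords)=B_{\mathbbm{1}}(0.1)\cap Y_{\trCoords}$. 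Both operations vary semi-algebraically and continuously in $\trCoords$: the rational formulas are manifestly continuous, and the convex polyhedron $Y_{\trCoords}$ defined by the open inequalities \eqref{eq:ineq1}--\eqref{eq:ineq2} depends continuously on $\trCoords$, so its intersection with $B_{\mathbbm{1}}(0.1)$ does too, and the radial retraction onto this intersection can be written as a continuous function of $\trCoords$. Composing these two families of fibrewise homeomorphisms produces a continuous map $F$ with continuous inverse.

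The main obstacle, and the only step that really needs care, is the continuity of the radial retraction $\xi$ in the parameter $\trCoords$. I would handle it by giving $\xi$ an explicit description: for each $(\eCoords^+,\eCoordsb)\in\pi_{\trCoords}(X_{\trCoords})$ let $s(\trCoords,\eCoords^+,\eCoordsb)$ be the unique $s\in(0,1]$ such that the point $\mathbbm{1}+s\bigl((\eCoords^+,\eCoordsb)-\mathbbm{1}\bigr)$ lies on $\partial B_{\mathbbm{1}}(0.1)\cup \partial Y_{\trCoords}$, and define $\xi$ by rescaling so this parameter maps to a fixed point of $\partial B_{\mathbbm{1}}(0.1)$. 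Since $s$ is defined by a family of strict inequalities whose coefficients are continuous in $\trCoords$, it is continuous, and $\xi$ inherits continuity. Once $F$ and $F^{-1}$ are both continuous, we conclude
\[
\mathring{\mathcal{C}}(\Delta)\ \cong\ \R^T_{>0}\times\R^{|\underline{E}|-|K|}_{>0}\ \cong\ \R^{|T|+|\underline{E}|-|K|}_{>0},
\]
which is the claim.
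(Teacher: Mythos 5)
Your overall strategy is the same as the paper's up to a point: both proofs fix a standard refining triangulation $\Delta'$, project via $\pi:(\trCoords,\eCoords,\eCoordsb)\mapsto\trCoords$, invoke Lemma~\ref{lem:surjective} for surjectivity and Lemma~\ref{lem:main2} for the fibres. Where you diverge is in the final assembly step. The paper observes that $\pi$ is a locally trivial fibration (the semi-algebraic structure of $\partial\clCell(\Delta)$) over the contractible base $\R^T_{>0}$, hence globally trivial, and stops there. You instead try to build the global trivialisation $F$ explicitly by stringing together the per-fibre homeomorphisms from the proof of Lemma~\ref{lem:main2}.

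This is where there is a genuine gap. Tracing through Lemma~\ref{lem:main2}, the per-fibre homeomorphism lands you on $U_\Delta(\trCoords)=B_{\mathbbm{1}}(0.1)\cap Y_{\trCoords}$, and this \emph{target set depends on $\trCoords$}: the point $\mathbbm{1}$ lies on $\partial Y_{\trCoords}$ for every $\trCoords$ (all the inequalities \eqref{eq:ineq1}--\eqref{eq:ineq2} vanish there), and the tangent cone of $Y_{\trCoords}$ at $\mathbbm{1}$ genuinely moves as the coefficients $A_m$ vary. In particular there is no single interior point common to all the $U_\Delta(\trCoords)$ that you could use as a fixed centre for a radial homeomorphism. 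Your construction therefore produces a continuous bijection
\[
F:\mathring{\mathcal{C}}(\Delta)\longrightarrow \bigcup_{\trCoords\in\R^T_{>0}}\{\trCoords\}\times U_\Delta(\trCoords),
\]
but the right-hand side is \emph{not} visibly the product $\R^T_{>0}\times\R^{|\underline{E}|-|K|}_{>0}$, and your argument never supplies a continuously $\trCoords$-dependent family of homeomorphisms $U_\Delta(\trCoords)\cong\R^{|\underline{E}|-|K|}_{>0}$ that would let you conclude. You also can't fix this by rescaling rays onto $B_{\mathbbm{1}}(0.1)$ as sketched, because the image of that rescaling is the cone of ``admissible'' directions out of $\mathbbm{1}$ intersected with the ball, and that cone, too, varies with $\trCoords$.

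The gap is repairable: for instance one could pick a semi-algebraically continuous interior point $p(\trCoords)\in U_\Delta(\trCoords)$ (say the Steiner point of the bounded convex set $U_\Delta(\trCoords)$, which is rational in the facet data and hence continuous in $\trCoords$) and use the Minkowski functional of $U_\Delta(\trCoords)$ based at $p(\trCoords)$ to map each fibre onto a fixed open ball. But that is an additional substantive step and you should write it out. Alternatively, the paper's route — observe that you have a surjective, locally trivial, semi-algebraic fibration over a contractible base and invoke the resulting global triviality — avoids the issue entirely and is the cleaner finish; your explicit construction is the longer and more delicate road.
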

\begin{proof}
{As usual we denote by $\Delta^\prime$ a standard ideal triangulation of $S$ refining $\Delta$. We denote by $\{b_j \}_{j \in J}$ the edges of $\Delta$ and $\{a_k\}_{k \in K}$ the edges of $\Delta^\prime \setminus \Delta$.} Let ${\trCoords}$, $\eCoords$ and $\eCoordsb$ be assignments of positive real numbers to the ideal triangles of $\Delta^\prime$, and to the oriented edges of $\Delta^\prime \setminus \Delta$ and of $\Delta$ respectively so that we understand $({\trCoords}, a, b)$ 
 to be an element of $\A$. 
Let $\pi$ denote the projection defined by \eqref{eq:projection_main1}-\eqref{eq:projection_main2}. We have seen in Lemma \ref{lem:main2} each of the fibers of $\pi$ is homeomorphic to {$\R^{ \mid \underline{E} \mid -  \mid K \mid }_{>0}$ and in particular each is homeomorphic to all of the others}. Since $\partial \clCell (\Delta)$ is defined by a finite family of polynomial equations, it follows that $\pi$ is a locally trivial fibration. However we have seen in Lemma \ref{lem:surjective} that $\pi$ is surjective. Since $\R^T_{>0}$ is contractible it follows that $\pi$ is a globally trivial fibration. Therefore $\Cell (\Delta) \cong {\R^{\mid T \mid  + \mid \underline{E} \mid  - \mid K \mid }_{>0} } $ as required.
\end{proof}

\begin{exm}
	A doubly-decorated convex projective structure on a once-punctured torus $S_{1,1}$ is chosen by fixing a triangulation $\Delta \subset S_{1,1}$ together with $\mathcal{A}$-coordinates, in this case two triangle parameters and six edge parameters. 
	The latter are denoted by $(A,B,a^+,a^-,b^+,b^-,c^+,c^-) \in \R_{>0}^8$ as depicted in Figure~\ref{fig:ex_torus}. 
	We give three examples of points inside $\Cell (\Delta)$, that is structures for which $\Delta$ is the unique canonical triangulation, namely the structures whose $\mathcal{A}$-coordinates are
	\[
	(1,1,\tfrac{7}{4},\tfrac{1}{4},1,4,\tfrac{5}{4},\tfrac{5}{4}) \enspace , \qquad (1,2,3,4,1,2,3,4) \enspace , \qquad (\tfrac{1}{3},\tfrac{1}{2},1,\tfrac{4}{6},\tfrac{2}{5},2,\tfrac{1}{5},\tfrac{1}{10}) \enspace .
	\] 
	Each of these structures gets deformed by the smooth deformations used in the proof of Lemma~\ref{lem:main1} that deforms all edge parameters linearly to $1$, while the triangle parameters remain constant. 
	The representative of each projective structure is chosen by fixing three ideal vertices in $\R^3$.
	\begin{figure}[ht]
		\centering
		\begin{tikzpicture}
		\node [circle, fill = red1, inner sep = 0pt, minimum size = 4pt, label = above : {$A$} ] (A) at (1,2) {};
		\node [circle, fill = red1, inner sep = 0pt, minimum size = 4pt, label = below : {$B$} ] (B) at (2,1) {};	
		\draw (0,0) -- (0,3) -- (3,3) -- (3,0) -- cycle;
		\draw (0,0) -- (3,3);
		\node [circle, fill = blue1, inner sep = 0pt, minimum size = 4pt, label = below : {$a^+$}] at ( 1, 0 ) {};
		\node [circle, fill = blue1, inner sep = 0pt, minimum size = 4pt, label = below : {$a^-$}] at ( 2, 0 ) {};
		\node [circle, fill = blue1, inner sep = 0pt, minimum size = 4pt, label = left : {$b^+$}] at ( 0, 1 ) {}; 
		\node [circle, fill = blue1, inner sep = 0pt, minimum size = 4pt, label = left : {$b^-$}] at ( 0, 2 ) {}; 
		\node [circle, fill = blue1, inner sep = 0pt, minimum size = 4pt, label = above : {$a^+$}] at ( 1, 3 ) {};
		\node [circle, fill = blue1, inner sep = 0pt, minimum size = 4pt, label = above : {$a^-$}] at ( 2, 3 ) {};
		\node [circle, fill = blue1, inner sep = 0pt, minimum size = 4pt, label = right : {$b^+$}] at ( 3, 1 ) {}; 
		\node [circle, fill = blue1, inner sep = 0pt, minimum size = 4pt, label = right : {$b^-$}] at ( 3, 2 ) {}; 
		\node [circle, fill = blue1, inner sep = 0pt, minimum size = 4pt, label = above : {$c^+$}] at ( 2, 2 ) {};
		\node [circle, fill = blue1, inner sep = 0pt, minimum size = 4pt, label = below : {$c^-$}] at ( 1, 1 ) {};		
		\end{tikzpicture}
		\includegraphics[width=0.95\linewidth]{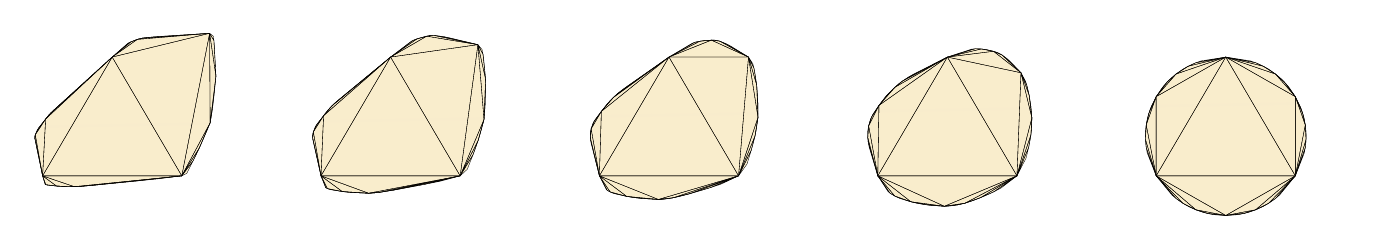}
		\includegraphics[width=0.95\linewidth]{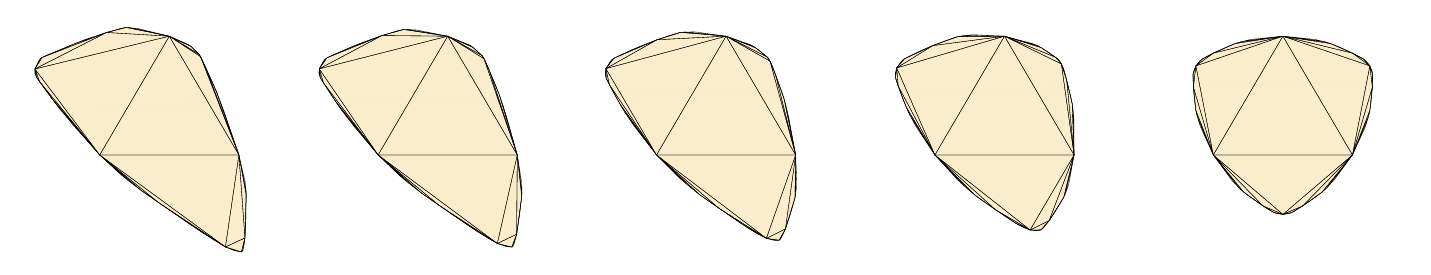}			
		\includegraphics[width=0.95\linewidth]{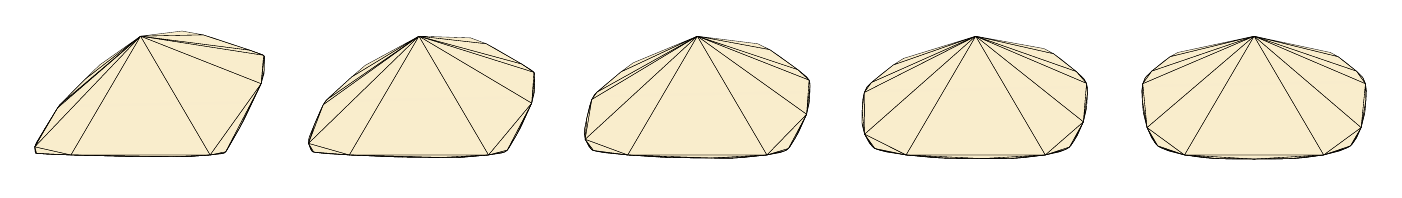}
		\caption{A triangulated torus with $\mathcal{A}$-coordinates $(A,B,a^+,a^-,b^+,b^-,c^+,c^-) \in \R_{>0}^8$ (top). The first row depicts a linear deformation from $(1,1,\tfrac{7}{4},\tfrac{1}{4},1,4,\tfrac{5}{4},\tfrac{5}{4})$ (left) to $\mathbbm{1} \in \R_{>0}^8$ (right). In the second row we linearly deform from $(1,2,3,4,1,2,3,4)$ to $(1,2,1,1,1,1,1,1)$. The last row shows the deformation from $(\tfrac{1}{3},\tfrac{1}{2},1,\tfrac{4}{6},\tfrac{2}{5},2,\tfrac{1}{5},\tfrac{1}{10})$ to $(\tfrac{1}{3},\tfrac{1}{2},1,1,1,1,1,1)$. For all convex projective structures that appear the chosen triangulation is uniquely canonical.}
		\label{fig:ex_torus}
	\end{figure}
\end{exm}


\section{Duality}
\label{sec:duality}

This section describes a natural projective duality of doubly decorated structures and its effect on $\A$--coordinates (\S\ref{subsec:dual-proj-st}). The self-dual structures are shown to be hyperbolic structures with a preferred dual decoration, leading to an identification of Penner's Decorated Teichm\"uller Space in $\A$--coordinates (\S\ref{subsec:penner_cell_dec}). Examples are given to show that our cell decomposition of moduli space is generally not invariant under this duality, even if the surfaces is once-cusped (\S\ref{subsec:Outitude and duality}).


\subsection{Duality in $\A$--coordinates}
\label{subsec:dual-proj-st}

Every convex projective structure has a dual structure, constructed using the self-duality of the projective plane. This induces an automorphism 
$\sigma\co \Tt(S)\to  \Tt(S)$. We briefly sketch the details and refer to \cite{Vinberg-theory-1963, Cooper-marked-2010} for a complete treatment. We then construct a natural duality map $\sigma^\ddagger\co\decTtf(S)\to \decTtf(S)$ and describe it as an automorphism of $\A_\Delta.$

Let $\Omega$ be a properly convex domain of $\RP^2$. Its \emph{dual domain} is the set 
$$
\Omega^* = \{ P \in \RP^2 \ | \ P^\perp \cap \overline{\Omega} = \emptyset \}.
$$
Then $\Omega^*$ is also a properly convex domain and $\Omega = (\Omega^*)^*$. Supporting lines to $\Omega$ correspond to points on the frontier of $\Omega^*$, and vice-versa. There is a real analytic diffeomorphism $\Phi: \Omega \rightarrow \Omega^*$, called the \emph{dual map}, which has the following elementary interpretation. For $P \in \Omega$, the line $P^\perp$ is disjoint from $\overline{ \Omega^*}$, thus the affine patch $\A_P = \RP^2 \setminus P^\perp$ strictly contains $\Omega^*$. Then $\Phi(P)$ is the centre of mass of $\Omega^*$ in $\A_P$. It turns out that $\Phi$ is a projective transformation if and only if $\Omega$ is a conic.

For $A \in \PGL(\Omega)$ and $P \in \Omega$, $\Phi(A(P)) = (A^t)^{-1} (\Phi(P)).$ Hence the \emph{dual group} of a subgroup $\Gamma \leq \PGL(3,\R)$ is defined to be the group
\[
\Gamma^* = \{ (A^t)^{-1} \ | \ A \in \Gamma \}.
\]
Clearly $\Gamma \cong \Gamma^*$. Since duality inverts the eigenvalues of the holonomy, $\Omega^* / \Gamma^*$ also has finite volume.

So $\Phi$ induces a real analytic diffeomorphism
\[
	\overline{\Phi} \co\Omega / \Gamma \rightarrow \Omega^* / \Gamma^*,
\]
and $(\Omega^*,\Gamma^*,\phi^*)$ is an element of $\Tt(S)$, where $\phi^* := \overline{ \Phi} \circ \phi$. We therefore have a map 
$\sigma\co\Tt(S)\to \Tt(S)$ which takes a projective structure to its dual structure. By construction, $\sigma$ is invertible and has order two. 

We extend $\sigma$ to an involution $\sigma^\ddagger \co \decTtf(S)\to  \decTtf(S)$ by defining
\[
\sigma^\ddagger( \Omega , \Gamma , \phi , ( \covec_dec , \vec_dec ) ) = (\Omega^*,\Gamma^*,\phi^*, ( \vec_dec , \covec_dec ) ) \enspace ,
\]
that is $\sigma^\ddagger$ swaps the role of the vector and covector decorations.

This map has the following very simple description in terms of $\A$--coordinates, and we will see in \S\ref{subsec:penner_cell_dec} that its fixed points correspond to decorated hyperbolic structures with a particularly nice choice of covector decoration.

To understand the the duality map in $\A$--coordinates in suffices to study the change of triangle parameter and edge parameters of a concrete decorated triangle when swapping vectors and covectors.
Let $(R,C) = \left( ( \vtx_0 \mid  \vtx_1  \mid \vtx_2 ) , ( \covtx_0 \mid\mid \covtx_1 \mid \mid \covtx_2  )  \right) $ be a concrete decorated triangle and denote by $a_{ij}$ the edge parameter of the edge oriented from vertex $\vtx_i$ to vertex $\vtx_j$, where $i\neq j\in \{0,1,2\}$. 
The triangle parameter of $(R,C)$ is denoted by $A_{012}$.
Now the \emph{dual concrete decorated triangle} is given by the pair $(C^t,R^t) = \left( ( \covtx_0 \mid\mid \covtx_1 \mid \mid \covtx_2  )^t , ( \vtx_0 \mid  \vtx_1  \mid \vtx_2 )^t  \right)$.
If we denote by $a_{ij}^\perp$ the edge parameter of the edge in $(C^t,R^t)$ oriented from vertex $\covtx_i$ to $\covtx_j$, then we have
\begin{equation}\label{eq:dual_edges}
a_{ij}^\perp = \vtx_i^t \cdot \covtx_j^t = \covtx_j \cdot \vtx_i = a_{ij} \enspace .
\end{equation}
The triangle parameter $A_{012}^\perp$ of the dual concrete decorated triangle $(C^t,R^t)$ is, by definition, given as the determinant of $R^t$.
Now since  
\[
R \cdot C = 
\left(
\begin{array}{ccc}
0 & a_{01} & a_{02} \\
a_{10} & 0 & a_{12} \\
a_{20} & a_{21} & 0
\end{array}
\right)
\]
it follows that 
\begin{equation}\label{eq:dual_triangles}
A_{012}^\perp = \det (R^t) = \det (R) = \frac{a_{01}a_{12}a_{20} + a_{10}a_{21}a_{02}}{\det (C)} = \frac{a_{01}a_{12}a_{20} + a_{10}a_{21}a_{02}}{A_{012}} \enspace .
\end{equation}
Summarizing, the description of the involution $\sigma^\ddagger$ in $\A$--coordinates is given by \eqref{eq:dual_edges} and \eqref{eq:dual_triangles}.
We wish to emphasize that our convention for choosing a dual decoration currently has no \emph{intrinsic} justification, except for the case of hyperbolic structures.


\subsection{Penner's Decorated Teichm\"uller Space}\label{subsec:penner_cell_dec}

As before, let $S=S_{g,n}$ be a surface of genus $g \geq 0$ having $n>0$ punctures and Euler characteristic $\chi(S)=2-2g-n$ negative.
In \cite{Penner-Decorated-1987} Penner proves that Epstein-Penner's convex hull construction induces a cell decomposition of the decorated Teichm\"uller space $\decTeich(S)$. 
In the following we will embed Penner's decorated Teichm\"uller space in the space of $\mathcal{A}$-coordinates.

We briefly recall Penner's parametrization of $\decTeich(S)$. 
A \emph{marked hyperbolic structure} on $S$ is a marked properly convex projective structure $(\Omega , \Gamma , \phi) \in \Ttf(S)$ of finite volume whose convex domain $\Omega$ is a conic. 
By projective equivalence we may assume that this conic is defined Minkowski bilinear form
\[
\langle X,Y \rangle = X_0Y_0 + X_1Y_1 - X_2Y_2 \enspace ,
\] 
where $X=(X_0,X_1,X_2), ~Y=(Y_0,Y_1,Y_2) \in \R^3$.
In this case $\Omega$ equals the projectivization of the set 
\[
\HH^2 = \{ X \in \R^3 \mid \langle X,X \rangle = -1 ~,~ X_2>0 \} \enspace ,
\]
the hyperboloid model of the hyperbolic plane.
Using the affine chart at height $X_2=1$ we may identify $\Omega$ with the standard Klein disc, $\Omega = \{ (X_0,X_1,1) \in \R^3 \mid X_0^2+X_1^2<1\}$.
It follows that for a hyperbolic structure the holonomy group $\Gamma$ is a discrete subgroup of $\SO^+(2,1) < \SL_3(\R)$, and as such $\Gamma$ is isomorphic to a Fuchsian group in $\PSL_2(\R)$.

Denote by 
\[
\mathcal{L}^+ = \R_{>0} \cdot \partial \Omega = \{  V \in \R^3 \mid \langle V,V \rangle = 0 ~,~ V_2 > 0 \} 
\]
the associated positive light-cone.

Let $(\Omega, \Gamma, \phi) \in \decTeich(S)$.
As in \eqref{def:vec_dec} we may choose a vector decoration $\vec_dec \subset \mathcal{L}^+$, i.e. a $\Gamma$-invariant set of light-cone representatives for all parabolic fixed points.

The choice of a marked hyperbolic structure on $S$ together with a vector decoration forms the data of a point in the \emph{decorated Teichm\"uller space} $\decTeich(S)$.

Let $(\Omega , \Gamma , \phi , \vec_dec) \in \decTeich(S)$.
Fix some triangulation $\Delta \subset S$ consisting of E edges.
Let $\widetilde{\Delta}$ denote a lift of $\Delta$ to the hyperbolic plane $\HH$ such that edges of $\Delta$ are lifted to geodesic arcs. 
Let $\widetilde{e} \in \widetilde{\Delta}$ be a lift of an edge $e \in \Delta$, and denote the two ideal vertices at the ends of $\widetilde{e}$ by $p,q$.
Since $e$ connects two (not necessarily distinct) cusps, $p$ and $q$ are two parabolic fixed points with associated light-cone representatives $V_p, V_q \in \vec_dec$ .
Following Penner~\cite{Penner-book-2012}, we define the \emph{$\lambda$-length} of the edge $e$ by
\begin{equation}\label{eq:def_lambda}
\lambda_e = \sqrt{ - \langle V_p , V_q \rangle} \enspace .
\end{equation}
The constant $-1$ in the above formula is somewhat arbitrary; it affects certain formulae that are irrelevant to this paper. The original choice by Penner~\cite{Penner-Decorated-1987} was
\(
\lambda_e = \sqrt{ - \tfrac{1}{2} \langle V_p , V_q \rangle}.
\)

Penner proves that this defines a homeomorphism $\lambda: \decTeich(S) \mapsto \R_{>0}^E$, yielding  \emph{Penner's $\lambda$-coordinates} on the decorated Teichm\"uller space.

Let us investigate Penner's $\lambda$-coordinates in view of the $\mathcal{A}$-coordinates defined in Section \ref{sec:decorated_higher_teichmueller_space} .
Given a decorated marked hyperbolic structure $(\Omega , \Gamma , \phi , \mathcal{B}) \in \decTeich(S)$ and 
an ideal triangulation $\Delta \subset S$, the only data that is missing to specify $\mathcal{A}$-coordinates is a covector decoration.
However, in case of a hyperbolic structures the Minkowski bilinear form allows us to make a somewhat canonical choice for this covector decoration.
For a light-cone vector $V \in \mathcal{L}^+$ consider the linear form $\covtx_V \in (\R^3)^\ast$ given by  $\covtx_V(X) = - \langle V , X \rangle$.  
This choice of covector decoration yields symmetric edge parameters by symmetry of the Minkowki form, i.e.
\begin{equation} \label{eq:edge_coords_penner}
a^+ = - \covtx_{V_0}(V_1) = - \langle V_0 , V_1 \rangle = - \langle V_1 , V_0 \rangle = - \covtx_{V_1}(V_0) = a^- \enspace , 
\end{equation}
where $a^\pm$ are the two edge parameters along an edge $a \in \Delta$.
Furthermore, consider a concrete decorated triangle $(R,C)$ with this choice of covector decoration. 
Denote the three adjacent edge parameters around $(R,C)$  by $a=\langle \vtx_0 , \vtx_1 \rangle$, $b=\langle \vtx_0 , \vtx_2 \rangle$ and $c=\langle \vtx_1 , \vtx_2 \rangle$.
The associated positive counter-diagonal matrix $R \cdot C$ is the negative Gram matrix of the Minkowski bilinear form with respect to the basis $C=(\vtx_0 , \vtx_1 , \vtx_2)$, i.e.
\[
R \cdot C = \begin{pmatrix} 0 & -a & -b \\ -a  & 0 & -c  \\ -b  & -c  & 0 \end{pmatrix} = C^T  \begin{pmatrix} 1 & 0 & 0 \\ 0 & 1 & 0 \\ 0 & 0 & -1 \end{pmatrix} C \enspace .
\] 
It follows that the triangle parameter $A= \det(C)$ associated to the concrete decorated triangle $(R,C)$ satisfies
\begin{equation} \label{eq:triang_coords_penner}
A^2  = 2abc \enspace . 
\end{equation}
To summarize,  Penner's decorated Teichm\"uller space $\decTeich(S)$ can be identified as an $E$-dimensional subvariety $\A_\hyp(S)$
in the space of $\A$-coordinates $\A \cong \R_{>0}^{T+2E}$ subject to the conditions \eqref{eq:edge_coords_penner} and \eqref{eq:triang_coords_penner} for the triangle and edge parameters.

By construction, the  subvariety $\A_\hyp(S)$ is invariant under the duality map $\sigma^\ddagger$ in $\A$--coordinates given by \eqref{eq:dual_edges} and \eqref{eq:dual_triangles}. A simple calculation shows that it is also invariant under the change of coordinates given by equations \eqref{eq:flip_triang} and \eqref{eq:flip_edge}, and hence under the action of the mapping class group $\MCG(S).$

It follows from the defining equations that $\A_\hyp$ is the image of an embedding $\R_{>0}^E \hookrightarrow \A$.
Its inverse map is given by the projection $\pi: \A \mapsto \R_{>0}^E$ that forgets about all triangle parameters and only chooses one orientation for each edge, say
\[
\pi (\trCoords , \eCoords) = \eCoords^+ \in \R_{>0}^E \enspace .
\]
We identify $\A_\hyp$ with its diffeomorphic image $\pi( \A_\hyp) \cong \R_{>0}^E$. A point in our decorated Teichm\"uller space $\A_\hyp$ is thus specified by a choice of one positive parameter per unoriented edge.
Note that, by definition, the edge parameters we use for decorated hyperbolic structures are related to Penner's $\lambda$-lengths by
\begin{equation}\label{eq:A_to_lambda}
e = \lambda^2_e \enspace .
\end{equation}

Let $\eCoords \in \A_\hyp$ be a decorated hyperbolic structure and consider an edge $e \in \Delta$ with surrounding $\A$-cordinates as in Figure \ref{fig:outitude_A-hyp}.
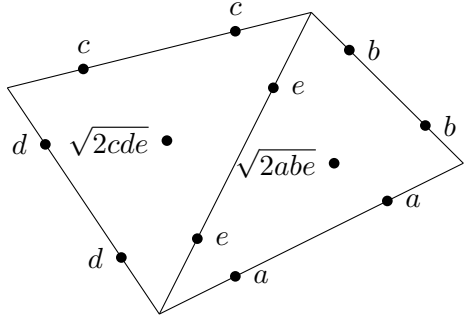
\begin{figure}
	\center
	\begin{tikzpicture}
	\draw (4, 4) edge  (6, 2); 
	\draw (6, 2) edge (2,0); 
	\draw (2, 0) edge (4, 4); 
	\draw (4, 4) edge (0, 3); 
	\draw (0, 3) edge (2, 0); 
	\node [circle, fill = black, inner sep = 0pt, minimum size = 4pt, label = left : {$\sqrt{2abe}$} ] (A) at (4.3, 2) {};	
	\node [circle, fill = black, inner sep = 0pt, minimum size = 4pt, label = left : {$\sqrt{2cde}$} ] (B) at (2.1, 2.3) {};
	\node [circle, fill = black, inner sep = 0pt, minimum size = 4pt, label = right : {$e$} ] (ep) at (2.5, 1) {};
	\node [circle, fill = black, inner sep = 0pt, minimum size = 4pt, label = right : {$e$} ] (em) at (3.5, 3) {};
	\node [circle, fill = black, inner sep = 0pt, minimum size = 4pt, label = right : {$a$} ] (am) at (3, 0.5) {};
	\node [circle, fill = black, inner sep = 0pt, minimum size = 4pt, label = right : {$a$} ] (ap) at (5, 1.5) {};
	\node [circle, fill = black, inner sep = 0pt, minimum size = 4pt, label = right : {$b$} ] (bp) at (4.5, 3.5) {};
	\node [circle, fill = black, inner sep = 0pt, minimum size = 4pt, label = right : {$b$} ] (bm) at (5.5, 2.5) {};
	\node [circle, fill = black, inner sep = 0pt, minimum size = 4pt, label = above : {$c$} ] (cp) at (3, 3.75) {};
	\node [circle, fill = black, inner sep = 0pt, minimum size = 4pt, label = above : {$c$} ] (cm) at (1, 3.25) {};
	\node [circle, fill = black, inner sep = 0pt, minimum size = 4pt, label = left : {$d$} ] (dp) at (0.5, 2.25) {};
	\node [circle, fill = black, inner sep = 0pt, minimum size = 4pt, label = left : {$d$} ] (dm) at (1.5, 0.75) {};
	\end{tikzpicture}
	\caption{The edge and triangle parameters for $\A_\hyp$ around an edge $e \in \Delta$.} \label{fig:outitude_A-hyp}
\end{figure}
In this case the outitude \eqref{eq:A_Out} along an edge $e$ specializes to
\[
\Out_{\eCoords }(e) = \sqrt{2abe} (ec + ed - e^2 ) + \sqrt{2cde} ( ea + eb - e^2) \enspace .
\]
It follows that $\Out_{\eCoords}(e) > 0$ if and only if
\begin{equation}\label{eq:hyp_Out}
\sqrt{ab} (c + d - e ) + \sqrt{cd} ( a + b - e) > 0 \enspace .
\end{equation}

It follows from Penner's work that the outitude conditions give $\A_\hyp$ a cell-decomposition. In particular, the subvariety $\A_\hyp(S)$ meets each cell $\mathring{ \mathcal{C} } ( \Delta )$ in a properly embedded cell.


\subsection{Outitude and duality}
\label{subsec:Outitude and duality}

We give an example of mutually dual doubly-decorated convex projective structures that have different canonical cell decompositions. The example is of a once punctured surface, so the canonical cell decomposition is independent of the double decoration of the dual convex projective structure chosen by our convention.

\begin{exm}\label{ex:dual_doubletorus}
Figure \ref{fig:dual_doubletorus} shows a triangulation of a once-punctured double torus $S_{2,1}$ together with edge coordinates (blue) and triangle coordinates (red) that form the data of $\A$-coordinates on this triangulation.
Denote by $\alpha \in \A$ the doubly-decorated convex projective structure whose $\A$-coordinates are given by
\[
b_0^+=b_2^+=b_5^-=b_6^-=b_7^+=2 \enspace , \quad b_6^+=3 \enspace ,
\]
and where all other edge and triangle parameters are equal to $1$ (cf. Figure \ref{fig:dual_doubletorus}).
Computing the outitutes for $\alpha$ we obtain
\[
\begin{array}{lllll}
	\Out_{\alpha}(b_0) = 4 \enspace , & \Out_{\alpha}(b_1) = 3 \enspace , & \Out_{\alpha}(b_2) = 3 \enspace , & \Out_{\alpha}(b_3) = 3 \enspace , & \Out_{\alpha}(b_4) = 3 \\
	\Out_{\alpha}(b_5) = 3 \enspace , & \Out_{\alpha}(b_6) = 8 \enspace , & \Out_{\alpha}(b_7) = 4 \enspace , & \Out_{\alpha}(b_8) = 3 \enspace . & ~ 
\end{array} 
\]
Since all outitude values are positive the triangulation $\tri$ is the canonical for $\alpha$, i.e. $\alpha \in \mathring{ \mathcal{C}}(\tri)$.
We will now show that the projectively dual structure $\alpha^\perp$ is not contained in $\mathring{ \mathcal{C}}(\tri)$.
The projectively dual structure $\alpha^\perp$ is given by 
\[
b_0^-=b_2^-=b_5^+=b_6^+=b_7^-=2 \enspace , \quad b_6^-=3 \enspace ,\quad  A_0=A_1=A_4=A_5=3 \enspace , \quad A_2=A_3=10 \enspace ,
\]
and again all other parameters equal to $1$.
This results in a negative outitude value $\Out_{\alpha^\perp}=-20$, hence $\alpha^\perp \notin \mathring{ \mathcal{C}}(\tri)$.
\begin{figure}
\centering
\begin{tikzpicture}
\node at (-3,0) { \begin{tikzpicture}
\begin{scriptsize}
\draw (4, 6) -- (6, 2)
node [circle, fill = blue1,  inner sep = 0pt, minimum size = 4pt,  label = below: {\color{blue1} $b^-_8$}, pos = 0.3] () {}
node [circle, fill = blue1,  inner sep = 0pt, minimum size = 4pt,  label = below: {\color{blue1} $b^+_8$}, pos = 0.7] () {};
\draw (4, 0) -- (2, 6)
node [circle, fill = blue1,  inner sep = 0pt, minimum size = 4pt,  label = left: {\color{blue1} $b^+_6$}, pos = 0.3] () {}
node [circle, fill = blue1,  inner sep = 0pt, minimum size = 4pt,  label = right: {\color{blue1} $b^-_6$}, pos = 0.7] () {};
\draw (4, 0) -- (4, 6)
node [circle, fill = blue1,  inner sep = 0pt, minimum size = 4pt,  label = below right: {\color{blue1} $b^+_7$}, pos = 0.3] () {}
node [circle, fill = blue1,  inner sep = 0pt, minimum size = 4pt,  label = below right: {\color{blue1} $b^-_7$}, pos = 0.7] () {};
\draw (2, 0) -- (2, 6)
node [circle, fill = blue1,  inner sep = 0pt, minimum size = 4pt,  label = left: {\color{blue1} $b^+_5$}, pos = 0.3] () {}
node [circle, fill = blue1,  inner sep = 0pt, minimum size = 4pt,  label = left: {\color{blue1} $b^-_5$}, pos = 0.7] () {};
\draw (2, 0) -- (0, 4)
node [circle, fill = blue1,  inner sep = 0pt, minimum size = 4pt,  label = right: {\color{blue1} $b^+_4$}, pos = 0.3] () {}
node [circle, fill = blue1,  inner sep = 0pt, minimum size = 4pt,  label = right: {\color{blue1} $b^-_4$}, pos = 0.7] () {};
\node [circle, fill = red1, inner sep = 0pt, minimum size = 4pt, label = below : {\color{red1} $A_2$} ] at (2.75, 1.5) {};
\node [circle, fill = red1, inner sep = 0pt, minimum size = 4pt, label = above right : {\color{red1} $A_4$} ] at (4.75, 2) {};
\node [circle, fill = red1, inner sep = 0pt, minimum size = 4pt, label = below : {\color{red1} $A_5$} ] at (5.5, 4) {};
\node [circle, fill = red1, inner sep = 0pt, minimum size = 4pt, label = above : {\color{red1} $A_3$} ] at (3.25, 4.5) {};
\node [circle, fill = red1, inner sep = 0pt, minimum size = 4pt, label = above left: {\color{red1} $A_1$} ] at (1.25, 3.5) {};
\node [circle, fill = red1, inner sep = 0pt, minimum size = 4pt, label = above : {\color{red1} $A_0$} ] at (0.5, 2) {};

\draw (2, 0) -- (4, 0)
node [circle, fill = blue1,  inner sep = 0pt, minimum size = 4pt,  label = below: {\color{blue1} $b^-_0$}, pos = 0.3] () {}
node [circle, fill = blue1,  inner sep = 0pt, minimum size = 4pt,  label = below: {\color{blue1} $b^+_0$}, pos = 0.7] () {};
\draw (4, 0) -- (6, 2)
node [circle, fill = blue1,  inner sep = 0pt, minimum size = 4pt,  label = below right: {\color{blue1} $b^-_1$}, pos = 0.3] () {}
node [circle, fill = blue1,  inner sep = 0pt, minimum size = 4pt,  label = below right: {\color{blue1} $b^+_1$}, pos = 0.7] () {};
\draw (6, 2) -- (6, 4)
node [circle, fill = blue1,  inner sep = 0pt, minimum size = 4pt,  label = right: {\color{blue1} $b^+_2$}, pos = 0.3] () {}
node [circle, fill = blue1,  inner sep = 0pt, minimum size = 4pt,  label = right: {\color{blue1} $b^-_2$}, pos = 0.7] () {};
\draw (6, 4) -- (4, 6)
node [circle, fill = blue1,  inner sep = 0pt, minimum size = 4pt,  label = above right: {\color{blue1} $b^+_3$}, pos = 0.3] () {}
node [circle, fill = blue1,  inner sep = 0pt, minimum size = 4pt,  label = above right: {\color{blue1} $b^-_3$}, pos = 0.7] () {};
\draw (4, 6) -- (2, 6)
node [circle, fill = blue1,  inner sep = 0pt, minimum size = 4pt,  label = above: {\color{blue1} $b^-_2$}, pos = 0.3] () {}
node [circle, fill = blue1,  inner sep = 0pt, minimum size = 4pt,  label = above: {\color{blue1} $b^+_2$}, pos = 0.7] () {};
\draw (2, 6) -- (0, 4)
node [circle, fill = blue1,  inner sep = 0pt, minimum size = 4pt,  label = above: {\color{blue1} $b^-_3$}, pos = 0.3] () {}
node [circle, fill = blue1,  inner sep = 0pt, minimum size = 4pt,  label = above: {\color{blue1} $b^+_3$}, pos = 0.7] () {};
\draw (0, 4) -- (0, 2)
node [circle, fill = blue1,  inner sep = 0pt, minimum size = 4pt,  label = left: {\color{blue1} $b^+_0$}, pos = 0.3] () {}
node [circle, fill = blue1,  inner sep = 0pt, minimum size = 4pt,  label = left: {\color{blue1} $b^-_0$}, pos = 0.7] () {};
\draw (0, 2) -- (2, 0)
node [circle, fill = blue1,  inner sep = 0pt, minimum size = 4pt,  label = below left: {\color{blue1} $b^+_1$}, pos = 0.3] () {}
node [circle, fill = blue1,  inner sep = 0pt, minimum size = 4pt,  label = below left: {\color{blue1} $b^-_1$}, pos = 0.7] () {};
\end{scriptsize}	
\end{tikzpicture}};
\node at (5,0) { \begin{tikzpicture}
	\draw (4, 6) -- (6, 2)
	node [circle, fill = blue1,  inner sep = 0pt, minimum size = 4pt,  label = below: {}, pos = 0.3] () {}
	node [circle, fill = blue1,  inner sep = 0pt, minimum size = 4pt,  label = below: {}, pos = 0.7] () {};
	\draw (4, 0) -- (2, 6)
	node [circle, fill = blue1,  inner sep = 0pt, minimum size = 4pt,  label = left: {\color{blue1} $3$}, pos = 0.3] () {}
	node [circle, fill = blue1,  inner sep = 0pt, minimum size = 4pt,  label = right: {\color{blue1} $2$}, pos = 0.7] () {};
	\draw (4, 0) -- (4, 6)
	node [circle, fill = blue1,  inner sep = 0pt, minimum size = 4pt,  label = right: {\color{blue1} $2$}, pos = 0.3] () {}
	node [circle, fill = blue1,  inner sep = 0pt, minimum size = 4pt,  label = below right: {}, pos = 0.7] () {};
	\draw (2, 0) -- (2, 6)
	node [circle, fill = blue1,  inner sep = 0pt, minimum size = 4pt,  label = left: {}, pos = 0.3] () {}
	node [circle, fill = blue1,  inner sep = 0pt, minimum size = 4pt,  label = left: {\color{blue1} $2$}, pos = 0.7] () {};
	\draw (2, 0) -- (0, 4)
	node [circle, fill = blue1,  inner sep = 0pt, minimum size = 4pt,  label = right: {}, pos = 0.3] () {}
	node [circle, fill = blue1,  inner sep = 0pt, minimum size = 4pt,  label = right: {}, pos = 0.7] () {};
	\node [circle, fill = red1, inner sep = 0pt, minimum size = 4pt, label = below : {} ] at (2.75, 1.5) {};
	\node [circle, fill = red1, inner sep = 0pt, minimum size = 4pt, label = above right : {} ] at (4.75, 2) {};
	\node [circle, fill = red1, inner sep = 0pt, minimum size = 4pt, label = below : {} ] at (5.5, 4) {};
	\node [circle, fill = red1, inner sep = 0pt, minimum size = 4pt, label = above : {} ] at (3.25, 4.5) {};
	\node [circle, fill = red1, inner sep = 0pt, minimum size = 4pt, label = above left: {} ] at (1.25, 3.5) {};
	\node [circle, fill = red1, inner sep = 0pt, minimum size = 4pt, label = above : {} ] at (0.5, 2) {};
	
	\draw (2, 0) -- (4, 0)
	node [circle, fill = blue1,  inner sep = 0pt, minimum size = 4pt,  label = below: {}, pos = 0.3] () {}
	node [circle, fill = blue1,  inner sep = 0pt, minimum size = 4pt,  label = below: {\color{blue1} $2$}, pos = 0.7] () {};
	\draw (4, 0) -- (6, 2)
	node [circle, fill = blue1,  inner sep = 0pt, minimum size = 4pt,  label = below right: {}, pos = 0.3] () {}
	node [circle, fill = blue1,  inner sep = 0pt, minimum size = 4pt,  label = below right: {}, pos = 0.7] () {};
	\draw (6, 2) -- (6, 4)
	node [circle, fill = blue1,  inner sep = 0pt, minimum size = 4pt,  label = right: {\color{blue1} $2$}, pos = 0.3] () {}
	node [circle, fill = blue1,  inner sep = 0pt, minimum size = 4pt,  label = right: {}, pos = 0.7] () {};
	\draw (6, 4) -- (4, 6)
	node [circle, fill = blue1,  inner sep = 0pt, minimum size = 4pt,  label = above right: {}, pos = 0.3] () {}
	node [circle, fill = blue1,  inner sep = 0pt, minimum size = 4pt,  label = above right: {}, pos = 0.7] () {};
	\draw (4, 6) -- (2, 6)
	node [circle, fill = blue1,  inner sep = 0pt, minimum size = 4pt,  label = above: {}, pos = 0.3] () {}
	node [circle, fill = blue1,  inner sep = 0pt, minimum size = 4pt,  label = above: {\color{blue1} $2$}, pos = 0.7] () {};
	\draw (2, 6) -- (0, 4)
	node [circle, fill = blue1,  inner sep = 0pt, minimum size = 4pt,  label = above: {}, pos = 0.3] () {}
	node [circle, fill = blue1,  inner sep = 0pt, minimum size = 4pt,  label = above: {}, pos = 0.7] () {};
	\draw (0, 4) -- (0, 2)
	node [circle, fill = blue1,  inner sep = 0pt, minimum size = 4pt,  label = left: {\color{blue1} $2$}, pos = 0.3] () {}
	node [circle, fill = blue1,  inner sep = 0pt, minimum size = 4pt,  label = left: {}, pos = 0.7] () {};
	\draw (0, 2) -- (2, 0)
	node [circle, fill = blue1,  inner sep = 0pt, minimum size = 4pt,  label = below left: {}, pos = 0.3] () {}
	node [circle, fill = blue1,  inner sep = 0pt, minimum size = 4pt,  label = below left: {}, pos = 0.7] () {};
	\end{tikzpicture}};
\end{tikzpicture}
\caption{$\A$-coordinates on a triangulation of a once-punctured double torus $S_{2,1}$ (left). The right figure shows the specific choice of coordinates $\alpha \in \A$ from Example \ref{ex:dual_doubletorus} where we omit to label those triangle and edge parameters with value $1$.}
\label{fig:dual_doubletorus}
\end{figure}
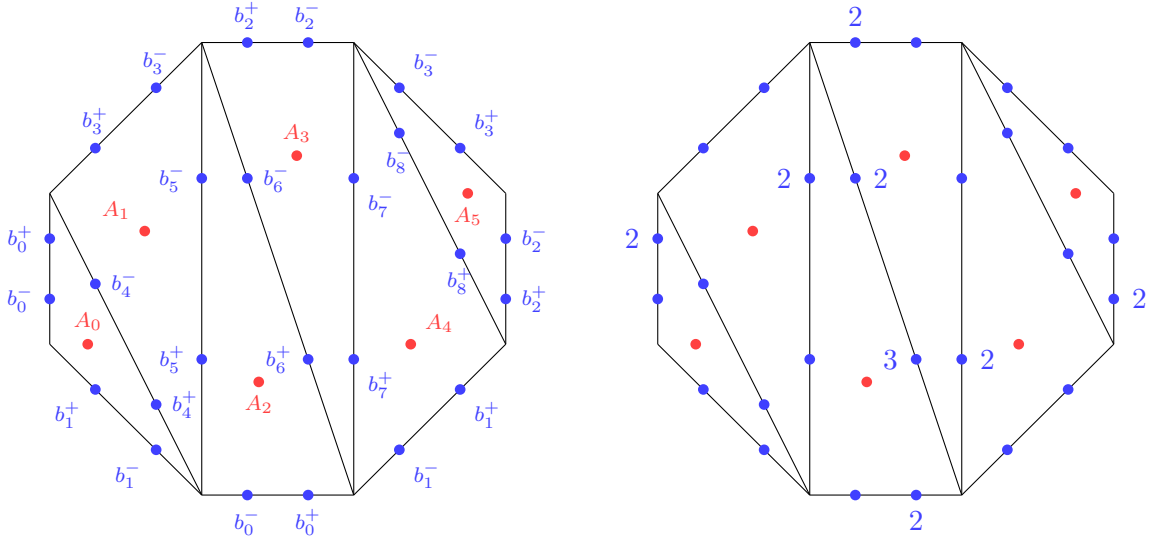
\end{exm}

The above example can easily be generalised to show that for all $g\ge 2,$ there are mutually dual structures of $S_{g,1}$ with different canonical cell decompositions. The situation is different for $S_{1,1}$ as the following example shows.

\begin{exm}\label{ex:dual_torus}
Consider $\A$-coordinates $\alpha=(A,B,a^+,a^-,b^+,b^-,c^+,c^-)$ on the triangulation $\tri$ of a once-punctured torus depicted on the left in Figure \ref{fig:dual_torus}.
The projectively dual structure has $\A$-coordinates $\alpha^\perp=(\tfrac{a^+b^+c^+ + a^-b^-c^-}{A},\tfrac{a^+b^+c^+ + a^-b^-c^-}{B},a^-,a^+,b^-,b^+,c^-,c^+)$, where we keep the ordering of triangle and edge coordinates as depicted in the left of Figure \ref{fig:dual_torus}.
The outitude of edge $c \in \tri$ in the projectively dual structure $\alpha^\perp$ is thus given by
\[
\Out_{\alpha^\perp}(c)= \tfrac{a^+b^+c^+ ~+~ a^-b^-c^-}{A}( a^-c^- + b^+c^+ - c^+c^- ) + \tfrac{a^+b^+c^+ ~+~ a^-b^-c^-}{B}( a^+c^+ + b^-c^- - c^+c^- ) \enspace .
\] 
Multiplying with the positive scalar $\tfrac{AB}{a^+b^+c^+ ~+~ a^-b^-c^-}$ we see that $\Out_{\alpha^\perp}(c) > 0$ if and only if 
\[
B( a^-c^- + b^+c^+ - c^+c^- ) + A( a^+c^+ + b^-c^- - c^+c^- ) > 0 \enspace .
\]
By symmetry the same holds for the other two edges $a,b \in \tri$.
It follows that in this particular example, $\mathring{ \mathcal{C}}(\tri)$ is invariant under projective duality.

\begin{figure}
\centering
\begin{tikzpicture}[scale=1.25]
\node [circle, fill = red1, inner sep = 0pt, minimum size = 4pt, label = above : {\color{red1} $A$} ] (A) at (1,2) {};
\node [circle, fill = red1, inner sep = 0pt, minimum size = 4pt, label = below : {\color{red1} $B$} ] (B) at (2,1) {};	
\draw (0,0) -- (0,3) -- (3,3) -- (3,0) -- cycle;
\draw (0,0) -- (3,3);
\node [circle, fill = blue1, inner sep = 0pt, minimum size = 4pt, label = below : {\color{blue1} $a^+$}] at ( 1, 0 ) {};
\node [circle, fill = blue1, inner sep = 0pt, minimum size = 4pt, label = below : {\color{blue1} $a^-$}] at ( 2, 0 ) {};
\node [circle, fill = blue1, inner sep = 0pt, minimum size = 4pt, label = left : {\color{blue1} $b^+$}] at ( 0, 1 ) {}; 
\node [circle, fill = blue1, inner sep = 0pt, minimum size = 4pt, label = left : {\color{blue1} $b^-$}] at ( 0, 2 ) {}; 
\node [circle, fill = blue1, inner sep = 0pt, minimum size = 4pt, label = above : {\color{blue1} $a^+$}] at ( 1, 3 ) {};
\node [circle, fill = blue1, inner sep = 0pt, minimum size = 4pt, label = above : {\color{blue1} $a^-$}] at ( 2, 3 ) {};
\node [circle, fill = blue1, inner sep = 0pt, minimum size = 4pt, label = right : {\color{blue1} $b^+$}] at ( 3, 1 ) {}; 
\node [circle, fill = blue1, inner sep = 0pt, minimum size = 4pt, label = right : {\color{blue1} $b^-$}] at ( 3, 2 ) {}; 
\node [circle, fill = blue1, inner sep = 0pt, minimum size = 4pt, label = right : {\color{blue1} $c^+$}] at ( 2, 2 ) {};
\node [circle, fill = blue1, inner sep = 0pt, minimum size = 4pt, label = left : {\color{blue1} $c^-$}] at ( 1, 1 ) {};	

\draw[<->] (3.8,1.5) -- (5.2,1.5) node [label = above: {dualizing}, pos = 0.5] () {};

\node [circle, fill = red1, inner sep = 0pt, minimum size = 4pt, label = above : {\scriptsize \color{red1} $\tfrac{a^+b^+c^+ ~+~ a^-b^-c^-}{A}$} ] (A) at (7,2) {};
\node [circle, fill = red1, inner sep = 0pt, minimum size = 4pt, label = below : {\scriptsize \color{red1} $\tfrac{a^+b^+c^+ ~+~ a^-b^-c^-}{B}$} ] (B) at (8,1) {};	
\draw (6,0) -- (6,3) -- (9,3) -- (9,0) -- cycle;
\draw (6,0) -- (9,3);
\node [circle, fill = blue1, inner sep = 0pt, minimum size = 4pt, label = below : {\color{blue1} $a^-$}] at ( 7, 0 ) {};
\node [circle, fill = blue1, inner sep = 0pt, minimum size = 4pt, label = below : {\color{blue1} $a^+$}] at ( 8, 0 ) {};
\node [circle, fill = blue1, inner sep = 0pt, minimum size = 4pt, label = left : {\color{blue1} $b^-$}] at ( 6, 1 ) {}; 
\node [circle, fill = blue1, inner sep = 0pt, minimum size = 4pt, label = left : {\color{blue1} $b^+$}] at ( 6, 2 ) {}; 
\node [circle, fill = blue1, inner sep = 0pt, minimum size = 4pt, label = above : {\color{blue1} $a^-$}] at ( 7, 3 ) {};
\node [circle, fill = blue1, inner sep = 0pt, minimum size = 4pt, label = above : {\color{blue1} $a^+$}] at ( 8, 3 ) {};
\node [circle, fill = blue1, inner sep = 0pt, minimum size = 4pt, label = right : {\color{blue1} $b^-$}] at ( 9, 1 ) {}; 
\node [circle, fill = blue1, inner sep = 0pt, minimum size = 4pt, label = right : {\color{blue1} $b^+$}] at ( 9, 2 ) {}; 
\node [circle, fill = blue1, inner sep = 0pt, minimum size = 4pt, label = right : {\color{blue1} $c^-$}] at ( 8, 2 ) {};
\node [circle, fill = blue1, inner sep = 0pt, minimum size = 4pt, label = left : {\color{blue1} $c^+$}] at ( 7, 1 ) {};		
\end{tikzpicture}	
\caption{A triangulation $\tri$ of a once-punctured torus with primal (left) and dual (right) $\A$-coordinates.}
\label{fig:dual_torus}	
\end{figure}
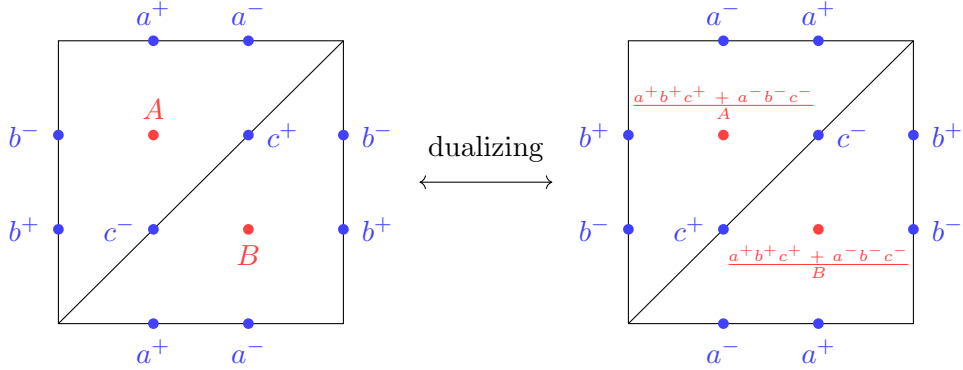
\end{exm}



\section{Other cell decompositions}
\label{sec:Other cell decompositions}

Labourie \cite{Labourie:2007} and Loftin \cite{Loftin:2001} described a bijection between convex projective structures on a compact surface $S$ and pairs $(\Sigma,U)$ of a hyperbolic structure $\Sigma$ on $S$ together with a cubic holomorphic differential $U$ on $\Sigma$.
For noncompact surfaces $S$, Benoist and Hulin \cite{BenoistHulin:2013} gave a similar bijection between finite area convex projective structures on $S$ and pairs $(\Sigma,U)$ of a finite area hyperbolic structure $\Sigma$ on $S$ and a cubic holomorphic differential $U$ on $\Sigma$ with poles of order at most 2 at the cusps.  
In both cases the authors showed that these bijections describe homeomorphism between the corresponding sets equipped with their natural topologies. Denote $V$ the vector space of cubic holomorphic differential $U$ on $\Sigma$ with poles of order at most 2 at the cusps.

Now given \emph{any} $\MCG(S)$--invariant cell decomposition of Teichm\"uller space ${\T}_\hyp(S),$ one obtains a natural $\MCG(S)$--invariant cell decomposition of ${\Tt}^f(S) \cong {\T}_\hyp(S) \times V.$ We claim that it is also invariant under duality. Loftin~\cite{Loftin:2001} shows that for $x=(\Sigma_x,U_x) \in \Ttf(S)$ the dual convex projective structure $x^\perp \in \Ttf(S)$ is obtained by replacing the cubic holomorphic differential $U_x$ by its negative, i.e. $x^\perp = (\Sigma_{x^\perp},U_{x^\perp})=(\Sigma_x,-U_x)$. In particular, the hyperbolic structures $\Sigma_x$ and $\Sigma_{x^\perp}$ coincide, and hence $x$ and $x^\perp$ lie in the same cell in $\Ttf(S).$ 

We now focus on the case of $S=S_{g,1}$, a once-punctured genus $g$ surface, since in this case the canonical cell decomposition of $( \Omega , \Gamma , \phi , ( \covec_dec , \vec_dec ) ) \in \decTtf(S)$ is independent of $( \covec_dec , \vec_dec ).$ In particular, one obtains a $\MCG(S)$--invariant cell decomposition of $\Ttf(S)$ from the outitude conditions. Example~\ref{ex:dual_doubletorus} implies that for $S_{2,1}$ this cell decomposition is not invariant under duality, and hence it cannot arise by appealing to the product structure due to Benoist and Hulin for a suitably chosen cell decomposition of Teichm\"uller space. The same applies to all $S=S_{g,1},$ where $g\ge 2.$

We complete this paper with another observation for $S=S_{g,1}.$ Denote by $\Sigma_x$ the hyperbolic structure associated to $x\in\decTtf(S)$ using the bijection of Benoist-Hulin. The convex hull construction of Epstein-Penner associates a canonical ideal cell decomposition $\tri(\Sigma_x) \subset S$ to the hyperbolic structure $\Sigma_x$, and the cells from the product structure of $\Ttf(S) \cong {\T}_\hyp(S) \times V$
are given by
\[
\mathring{ \mathcal{B}}(\tri) = \{ x \in \decTtf(S) ~|~ \tri(\Sigma_x)=\tri \} \enspace .
\] 
We wish to study how the two cells $\mathring{ \mathcal{B}}(\tri)$ and $\mathring{ \mathcal{C}}(\tri)$ are related.
If $x \in \widetilde{\Tt}^f(S)$ is hyperbolic, then the Benoist--Hulin correpondence yields $\Sigma_x=x$ and $U=0$, and therefore $\tri(\Sigma_x)=\tri(x)$.
It follows that the intersections of the two cells $\mathring{ \mathcal{B}}(\tri)$ and $\mathring{ \mathcal{C}}(\tri)$ with hyperbolic structures coincide, i.e.
\[
\mathring{\mathcal{B}}_\hyp(\tri) = \mathring{ \mathcal{C}}_\hyp(\tri) \enspace .
\]  

Example~\ref{ex:dual_torus} shows that in the case of the once-punctured torus $S_{1,1}$ the cell decomposition of $\decTtf(S_{1,1})$
is invariant under projective duality. The question whether its cells $\mathring{ \mathcal{C}}(\tri)$ coincide with the cells $\mathring{ \mathcal{B}}(\tri)$ remains open.

\section{Centres of cells}
\label{sec:cells}

Penner~\cite{Penner-Decorated-1987} describes a natural centre for each of the cells $\mathring{\mathcal{C}}(\Delta)$, where $\Delta \subset S$ is an ideal cell decomposition of $S.$ These centres are obtained by the assignment of the $\lambda$--length $1$ to each edge in $\Delta$. Suppose $\Delta$ is refined by the ideal triangulation $\Delta^\prime \subset S$. Then the $\lambda$--lengths for the edges in $\Delta^\prime \setminus \Delta$ are obtained by identifying each $n$--gon in $\Delta$ with the regular Euclidean $n$--gon of side length one, and assigning the lengths of the corresponding diagonals as $\lambda$--lengths. (See \cite[\S2.1.1]{Penner-book-2012} for a detailed discussion.) For this structure, the topological symmetry group of the ideal cell decomposition is the hyperbolic isometry group of the corresponding surface.

Penner shows that the set of all centres is invariant under the action of the mapping class group. Denote the image of the holonomy representation of the centre of $\mathring{\mathcal{C}}(\Delta)$ by $\Gamma(\Delta)\le \SL_3(\R).$ Penner shows that if $\Delta$ is a triangulation, then the centre of $\mathring{\mathcal{C}}(\Delta)$ is an arithmetic Fuchsian group $\Gamma(\Delta).$ Here, we recover this result and add some new observations concerning the remaining centres.

Our cell decomposition of $\widetilde{\mathcal{T}}(S)$ is $\MCG(S)$ invariant, and $\A_\hyp(S)$ is an $\MCG(S)$ invariant subset. It follows that Penner's centres are also natural centres for the cells in $\widetilde{\mathcal{T}}(S).$ The $\A$--coordinates of these centres are obtained from the above description of their $\lambda$--lengths, since \eqref{eq:A_to_lambda} determines the edge invariants and \eqref{eq:triang_coords_penner} subsequently the triangle invariants. 

\begin{figure}
	\centering
	\begin{tikzpicture}
	\node at (-3,0) { \begin{tikzpicture}[scale=1.8]
		\begin{scriptsize}
		\draw (0, 0) -- (0.26, 1.2)
		node [circle, fill = blue1,  inner sep = 0pt, minimum size = 4pt,  label = left: {\color{blue1} $1$}, pos = 0.5] () {};
		\draw (0, 0) -- (0.26, -1.2)
		node [circle, fill = blue1,  inner sep = 0pt, minimum size = 4pt,  label = left: {\color{blue1} $1$}, pos = 0.5] () {};
		\draw (1.2, 1.73) -- (0.26, 1.2)
		node [circle, fill = blue1,  inner sep = 0pt, minimum size = 4pt,  label = above: {\color{blue1} $1$}, pos = 0.5] () {};	
		\draw (1.2, -1.73) -- (0.26, -1.2)
		node [circle, fill = blue1,  inner sep = 0pt, minimum size = 4pt,  label = below: {\color{blue1} $1$}, pos = 0.5] () {};	
		\draw (1.2, 1.73) -- (2.1, 1.9)
		node [circle, fill = blue1,  inner sep = 0pt, minimum size = 4pt,  label = above: {\color{blue1} $1$}, pos = 0.5] () {};	
		\draw (1.2, -1.73) -- (2.1, -1.9)
		node [circle, fill = blue1,  inner sep = 0pt, minimum size = 4pt,  label = below: {\color{blue1} $1$}, pos = 0.5] () {};	
		\draw (3.73, 1) -- (4, 0)
		node [circle, fill = blue1,  inner sep = 0pt, minimum size = 4pt,  label = right: {\color{blue1} $1$}, pos = 0.5] () {};
		\draw (3.73, -1) -- (4, 0)
		node [circle, fill = blue1,  inner sep = 0pt, minimum size = 4pt,  label = right: {\color{blue1} $1$}, pos = 0.5] () {};
		\draw (3.73, 1) -- (3, 1.73)
		node [circle, fill = blue1,  inner sep = 0pt, minimum size = 4pt,  label = above: {\color{blue1} $1$}, pos = 0.5] () {};
		\draw (3.73, -1) -- (3, -1.73)
		node [circle, fill = blue1,  inner sep = 0pt, minimum size = 4pt,  label = below: {\color{blue1} $1$}, pos = 0.5] () {};
		\draw[dashed, gray!60] (2.1, 1.9) -- (3, 1.73);
		\draw[dashed, gray!60] (2.1, -1.9) -- (3, -1.73);
		\draw (0, 0) -- (3.73, 1)
		node [circle, fill = blue1,  inner sep = 0pt, minimum size = 4pt,  label = above: {\color{blue1} $d_{k-1}^2$}, pos = 0.5] () {};
		\draw (0, 0) -- (3.73, -1)
		node [circle, fill = blue1,  inner sep = 0pt, minimum size = 4pt,  label = below: {\color{blue1} $d_{k+1}^2$}, pos = 0.5] () {};
		\draw (0, 0) -- (4, 0)
		node [circle, fill = blue1,  inner sep = 0pt, minimum size = 4pt, pos = 0.5] () {};
		\draw (1.75, 0) node [circle, fill = white,  inner sep = 0pt, minimum size = 10pt,  label = center: {\color{blue1} $d_k^2$}] () {};
		\draw (0, 0) -- (1.2, 1.73)
		node [circle, fill = blue1,  inner sep = 0pt, minimum size = 4pt,  label = right: {\color{blue1} $d_1^2$}, pos = 0.5] () {};
		\draw (0, 0) -- (1.2, -1.73)
		node [circle, fill = blue1,  inner sep = 0pt, minimum size = 4pt,  label = right: {\color{blue1} $d_{n-3}^2$}, pos = 0.5] () {};
		\draw (2.6, 0.35) node [circle, fill = red1,  inner sep = 0pt, minimum size = 4pt,  label = right: {\color{red1} $\sqrt{2} d_{k-1}d_{k} $}] () {};
		\draw (2.6, -0.35) node [circle, fill = red1,  inner sep = 0pt, minimum size = 4pt,  label = right: {\color{red1} $\sqrt{2} d_{k}d_{k+1}$}] () {};
		\draw (0.4, 1) node [circle, fill = red1,  inner sep = 0pt, minimum size = 4pt] () {};
		\draw (0.3, 1.25) node [circle, fill = white,  inner sep = 0pt, minimum size = 20pt, label = center: {\color{red1} $\sqrt{2} d_1$}] () {};
		\draw (0.4, -1) node [circle, fill = red1,  inner sep = 0pt, minimum size = 4pt] () {};
		\draw (0.3, -1.25) node [circle, fill = white,  inner sep = 0pt, minimum size = 20pt, label = center: {\color{red1} $\sqrt{2} d_{n-3}$}] () {};		
		\end{scriptsize}	
		\end{tikzpicture}};
\end{tikzpicture}
\caption{Labeling of triangle and edge parameters of a standard triangulation of a hyperbolic center of a cell.}
\label{fig:Standard_Triangulation-Diagos}
\end{figure}
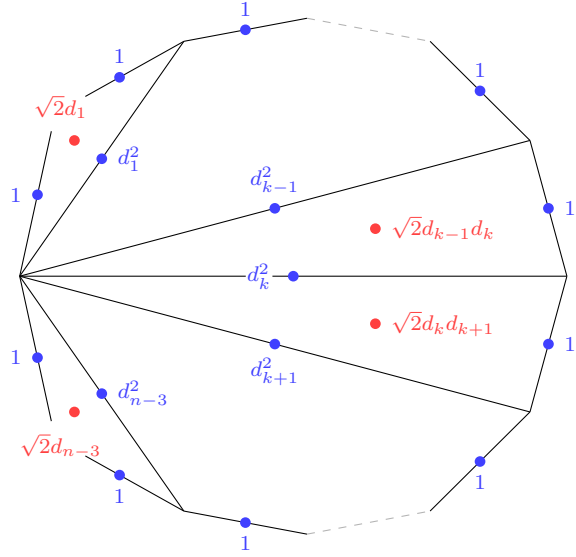

Using the standard triangulation and the assignment of $\A$--coordinates as in Figure~\ref{fig:Standard_Triangulation-Diagos}, the shown edge and triangle parameters satisfy the following recurrence relations, arising from the outitude conditions for all but the last diagonal. Note that the shortest diagonal in a regular $n$--gon with side lengths one has length $2\cos(\tfrac{\pi}{n}).$
For $2\le k \le n-3$ we have:
\begin{align*}
d_1&= 2\cos(\tfrac{\pi}{n}) & && d'_1&= 2\cos(\tfrac{\pi}{n})\\
d_2&= d_1^2-1 & && d'_2&= (d'_1)^2-1\\
d_k&=\frac{d_{k-1}^2-1}{d_{k-2}} &&& d'_k&= d'_1 d'_{k-1} - d'_{k-2}
\end{align*}
Here, the recurrence for $d_k$ follows by applying the outitude condition, and the equality $d_1 = d'_1$ follows by complete induction starting with the first three terms. It was shown by Lehmer~\cite{Lehmer-note-1933} that $2\cos(\tfrac{\pi}{n})$ is an algebraic integer, and it follows from the sequence $(d'_k)$ that all edge parameters of the $n$--gon are algebraic integers for \emph{any} subdivision of the $n$--gon into triangles. The field $\Q(2\cos(\tfrac{\pi}{n}))$ is the maximal real subfield of $\Q(\xi_n),$ where $\xi_n$ is a primitive root of unity. In particular, the edge invariants of the centre of $\mathring{\mathcal{C}}(\Delta)$ are algebraic integers.

Let $\text{Poly}(\Delta)\subset \N$ be the set of all $n\ge 4$ with the property that $\Delta$ contains at least one $n$--gon.  Let $\ell = \text{lcm}(n \mid n \in \text{Poly}(\Delta),$ and denote $\Q(\Delta) = \Q(\cos(\tfrac{\pi}{\ell})).$ Then $\Q(\Delta)$ is a totally real number field and for each $n \in \text{Poly}(\Delta)$ we have $2\cos(\tfrac{\pi}{n}) \in \Z[\cos(\tfrac{\pi}{\ell})]$ by application of Chebycheff polynomials. If $\Delta$ is a triangulation, then $\text{Poly}(\Delta)=\emptyset$ and $\Q(\Delta) = \Q.$

Referring to Figure~\ref{fig:A_to_X_coords} and letting $a^+=a^- = a^2,$ $b^+=b^- = b^2,$ $c^+=c^- = c^2,$ $d^+=d^- = d^2,$ $e^+=e^- = e^2,$ $A = \sqrt{2}abe$ and $B = \sqrt{2}dce$, it follows that every element of $\Gamma(\Delta)$ is a product of matrices of the form
\begin{equation}
T = \begin{pmatrix} 0 & 0 & 1 \\ 0 & -1 & -1 \\ 1 & 2 & 1 \end{pmatrix} \enspace \text{ and  } E = \begin{pmatrix} 0 & 0 & \tfrac{bd}{ac} \\ 0 & -1 & 0 \\ \frac{ac}{bd} & 0 & 1 \end{pmatrix} \enspace.
\end{equation}
where the edge invariants $a, b, c, d \in \Q(\Delta)$  are algebraic integers.

If $\Delta$ is a triangulation, then all edge invariants equal one. We therefore directly recover Penner's observation that in this case $\Gamma(\Delta)\le \SL_3(\Z)$, so $\Gamma(\Delta)$ is arithmetic,  We may ask whether centres of lower dimensional cells give rise to \emph{semi-arithmetic groups} in the sense of Schmutz Schaller and Wolfart~\cite{Schaller-semi-2000}. Here, a cofinite Fuchsian group $\Gamma \le \PSL_2(\R)$ is \emph{semi-arithmetic} if all traces of elements in $\Gamma^2,$ the subgroup generated by all squares of elements of $\Gamma$, are algebraic integers of a totally real number field. In particular, $\Gamma(\Delta)$ corresponds to a semi-arithmetic group if $\Gamma(\Delta)\le \SL_3(\mathcal{O}),$ where $\mathcal{O}$ is the ring of algebraic integers of a totally real number field. 

It follows from the description of the holonomy that $\Gamma(\Delta)$ is semi-arithmetic if the reciprocal of each each edge invariant is also an algebraic integer. It follows from equation (5) in \cite{Steinbach-golden-1997} that if $n$ is odd, the reciprocal of $2\cos(\tfrac{\pi}{n})$ is also an algebraic integer. If $n$ is even, this is not always the case. For instance, inspection of the minimal polynomials yields that for $n$ even in the range $[70]$, the reciprocal of $2\cos(\tfrac{\pi}{n})$ is an algebraic integer if and only if $n \in \{ 12, 20, 24, 28, 30, 36, 40, 42, 44, 48, 52, 56, 60, 66, 68, 70\}.$



\bibliographystyle{plain}
\bibliography{00_Cell_Decomp_main}


\address{Robert Haraway\\ Department of Mathematics, Oklahoma State University, Stillwater, OK 74078-1058, USA\\{robert.haraway@okstate.edu}\\-----}

\address{Robert L\"owe\\Technische Universit\"at Berlin, Institut f\"ur Mathematik, Str. des 17. Juni 136, 10623 Berlin, Germany\\{loewe@math.tu-berlin.de}\\----- }

\address{Dominik Tate\\School of Mathematics and Statistics F07, The University of Sydney, NSW 2006 Australia\\{D.Tate@maths.usyd.edu.au}\\-----}

\address{Stephan Tillmann\\School of Mathematics and Statistics F07, The University of Sydney, NSW 2006 Australia\\{stephan.tillmann@sydney.edu.au}}

\Addresses

\end{document}